\documentclass[12pt,a4paper]{amsart}
\usepackage{amssymb}
\usepackage{amsthm}
\usepackage{amsmath}
\usepackage{amsxtra}
\usepackage{latexsym}
\usepackage{mathrsfs}
\usepackage[all,cmtip]{xy}
\usepackage[all]{xy}
\usepackage{enumitem}
\usepackage{xcolor}
\usepackage{comment}
\usepackage{mathabx,epsfig}
\def\acts{\ \rotatebox[origin=c]{-90}{$\circlearrowright$}\ }
\def\racts{\ \rotatebox[origin=c]{90}{$\circlearrowleft$}\ }

\newtheorem{thm}{Theorem}[section]
\newtheorem{lem}[thm]{Lemma}
\newtheorem{conj}[thm]{Conjecture}

\newtheorem{prop}[thm]{Proposition}
\newtheorem{cor}[thm]{Corollary}
\newtheorem{eg}[thm]{Example}
\theoremstyle{definition}
\newtheorem{defn}[thm]{Definition}
\newtheorem{rmk}[thm]{Remark}
\newtheorem*{ack}{Acknowledgement}
\numberwithin{equation}{section}

\def\Q{{\mathbb Q}}
\def\R{{\mathbb R}}
\def\Z{{\mathbb Z}}

\def\Hom{\mathop{\mathrm{Hom}}\nolimits}

\DeclareMathOperator{\Pic}{Pic}

\DeclareMathOperator{\Spec}{Spec}

\DeclareMathOperator{\Supp}{Supp}

\DeclareMathOperator{\ord}{ord}

  {\end{list}}

\title[]
{Global $F$-splitting of surfaces admitting an int-amplified endomorphism}
\author{Shou Yoshikawa}
\address{Graduate school of Mathematical Sciences, the University of Tokyo, Komaba, Tokyo,
153-8914, Japan}
\email{yoshikaw@ms.u-tokyo.ac.jp}

\begin{document}

\begin{abstract}
In this paper, we study the global $F$-splitting of varieties admitting an int-amplified endomoprhism.
We prove that surfaces admitting an int-amplified endomorphism are of dense globally $F$-split type and,
in particular, of Calabi--Yau type.
\end{abstract}

\maketitle

\setcounter{tocdepth}{1}

\section{Introduction} 
Let $X$ be a normal projective variety over an algebraically closed field $k$ of characteristic zero. 
We say that an endomorphism $f \colon X \to X$ over $k$ is \emph{polarized} if there exist an ample Cartier divisor $H$ on $X$ and a positive integer $q$ such that $f^*H$ is linearly equivalent to $qH$.
Broustet and Gongyo proposed the following conjecture.
\begin{conj}[{\cite[Conjecture 1.2]{brou-gon}}]
If $X$ admits a non-trivial polarized endomorphism,
then $X$ is of Calabi--Yau type, that is,
there exists an effective $\Q$-Weil divisor $\Delta$ on $X$ such that $K_X+\Delta$ is $\Q$-linearly trivial and $(X, \Delta)$ is log canonical.
\end{conj}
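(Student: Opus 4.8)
I would prove the conjecture under the hypothesis $\dim X=2$ — in fact for the wider class of surfaces carrying an int-amplified endomorphism — by passing to positive characteristic. The plan is to strengthen the conclusion to a statement about the Frobenius: I will use the implication that a variety of \emph{dense globally $F$-split type} is of Calabi--Yau type, so that it suffices to prove that a normal projective surface $X$ admitting an int-amplified endomorphism $f$ is of dense globally $F$-split type. To that end I would spread $X$, an ample divisor $H$ with $f^{*}H-H$ ample, and the morphism $f$ out over a finitely generated $\Z$-subalgebra $A\subset k$; then for every closed point $\mathfrak{p}$ in a dense open subset of $\Spec A$ the reduction $X_{\mathfrak{p}}$ is a normal projective surface over $\overline{\mathbb{F}_{p}}$ equipped with an int-amplified endomorphism $f_{\mathfrak{p}}$, and the problem reduces to showing that $X_{\mathfrak{p}}$ is globally $F$-split for $\mathfrak{p}$ in a dense set of closed points.

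The backbone is the classification of surfaces admitting an int-amplified endomorphism, which I would obtain by running an $f$-equivariant minimal model program and examining the Kodaira dimension. The ramification formula $K_X=f^{*}K_X+R_f$, together with intersection-theoretic estimates on a minimal model (a Hodge-index bound when $\kappa(X)=2$, the fact that an int-amplified $f$ would have to amplify the fiber class when $\kappa(X)=1$), forces $\deg f=1$ as soon as $\kappa(X)\ge 1$, so that case does not occur; and when $\kappa(X)=0$ it forces $f$ to be \'etale, which excludes $K3$ and Enriques surfaces (simply connected, resp.\ with $\pi_1=\Z/2$ and no admissible \'etale self-cover) and leaves only abelian and bielliptic surfaces, each receiving a finite surjective morphism from an abelian surface. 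When $\kappa(X)=-\infty$, the base of the ruling carries a degree $\ge 2$ self-map, so $X$ is either a rational surface — which one knows must then be toric — or a ruled surface over an elliptic curve (for the split bundle $\P^{1}\times E$, and for torsion twists $\P(\mathcal{O}\oplus L)$ a tame finite quotient of $\P^{1}\times E'$ after an isogeny $E'\to E$). Undoing the finitely many $f$-equivariant blow-ups contracted along the way, one is reduced to verifying global $F$-splitting for a toric surface, for a finite tame quotient of an abelian surface, for a ruled surface over an elliptic curve, and then for the blow-ups themselves.

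With this list in hand the verification is case-by-case for $\mathfrak{p}$ in a dense set. Toric surfaces are globally $F$-split for every $p$. An abelian surface is globally $F$-split exactly when it is ordinary, which holds for a dense set of primes; and if $\pi\colon A_{\mathfrak{p}}\to X_{\mathfrak{p}}$ is the structural finite cover, then for $\mathfrak{p}$ not dividing $\deg\pi$ the trace of $\pi$ splits it, and global $F$-splitting descends from $A_{\mathfrak p}$ to $X_{\mathfrak p}$ along a split finite morphism. A product or tame quotient of $\P^{1}\times E$ with $E$ ordinary is globally $F$-split by assembling a splitting of $\P^{1}$, a splitting of the ordinary curve, and the same tame descent (for the remaining ruled surfaces over $E$ one uses the explicit behavior of the relevant rank-two bundle under Frobenius). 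Intersecting the set of good $\mathfrak{p}$ — those avoiding the bad primes of the model, making every degree of every cover in the classification invertible, keeping the equivariant contractions and the base change compatible, and realizing ordinary reduction for the abelian and elliptic pieces — remains a dense set, which yields dense global $F$-splitting of $X$, hence Calabi--Yau type.

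The principal obstacle, as I see it, is not any single case but carrying one dense set of primes through the entire argument: the classification introduces many auxiliary finite covers and equivariant blow-ups, each imposing its own exclusion of primes, and these must all be made compatible with the primes of ordinary reduction. Ordinarity is the subtle input — it is classical for the elliptic curves occurring (supersingular primes have density $0$ in the non-CM case and density $\tfrac{1}{2}$ in the CM case), while for abelian surfaces one has to invoke a density theorem for ordinary reduction — and ensuring it survives the intersection with all the other constraints is where care is needed. A secondary point requiring a genuine, if short, argument is the stability of global $F$-splitting under the $f$-equivariant blow-ups relating $X$ to its minimal model, and the Frobenius analysis of the non-split ruled surfaces over an elliptic curve.
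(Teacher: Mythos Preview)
Your overall strategy---prove dense global $F$-splitting and then invoke Gongyo--Takagi---matches the paper's. The gap is in your treatment of the rational case. You assert that a rational surface carrying an int-amplified endomorphism ``must then be toric.'' For \emph{smooth} rational surfaces this is Nakayama's theorem, and indeed the paper notes that the smooth case follows from the Nakayama--Fujimoto classification. But the statement concerns arbitrary \emph{normal} surfaces, and for singular $\Q$-factorial klt Mori fiber spaces $X\to\P^1$ your claim is false: the paper points to explicit examples (see \cite[Section 7]{mat-yoshi}) in which the ramification divisor of $f$ has no vertical component, and in precisely this situation the paper's covering theorem (Theorem~\ref{covlem}) constructs a quasi-\'etale cover $\widetilde{X}\to X$ with $\widetilde{X}$ a minimal ruled surface over an elliptic curve. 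Such an $X$ then has infinite $\pi_1^{\text{\'et}}(X_{\mathrm{sm}})$, so by \cite{gkp} it is not of Fano type and in particular not toric. This phenomenon---a singular rational $X$ admitting an irrational quasi-\'etale cover---is the main difficulty the paper isolates and resolves; your classification sketch offers no mechanism for it.

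Two smaller points. First, your Kodaira-dimension trichotomy and the exclusion of K3/Enriques surfaces tacitly treat $X$ as smooth; for a normal surface one must first establish that $X$ is $\Q$-Gorenstein and log canonical (the paper invokes \cite{brou-hor} and \cite{yoshi}) before running any MMP or speaking of $\kappa(X)$. Second, the ``short argument'' you anticipate for lifting global $F$-splitting along the equivariant contractions is the content of Theorem~\ref{lifting of F-splittness}: it genuinely uses that the exceptional locus is totally $f$-invariant so that its defect can be absorbed into $R_{f^n}$ for $n\gg 0$, and this is not a formality.
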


They proved that this conjecture holds in the surface case, using an equivarinat minimal model program (MMP, for short).
In this paper, we discuss a generalization of their result.

An endomorphism $f \colon X \to X$ over $k$ is called \emph{int-amplified}
if there exists an ample Cartier divisor $H$ on $X$ such that $f^{*}H-H$ is ample.
In particular, every non-invertible polarized ednomorphism is int-amplified.
Meng proved in \cite{meng} that if $X$ admits an int-amplified endomorphism, then every MMP for $X$ is $f^n$-equivariant for some positive integer $n$ (see $\S$ 3 for the details).
Hence, as a natural generalization of Conjecture 1.1,
we propose the following conjecture
for the case where $X$ admits an int-amplified endomorphism.
\begin{conj}
If $X$ has an int-amplified endomorphism,
then $X$ is of Calabi--Yau type.
\end{conj}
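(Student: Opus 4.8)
The plan is to run an $f$-equivariant minimal model program — available by Meng's theorem after replacing $f$ by a power — and to check that being of Calabi--Yau type is inherited along each step and at each endpoint; in arbitrary dimension this is blocked by inputs one lacks (abundance at a nef endpoint, the canonical bundle formula for lc fibrations, an induction on the base of a Mori fibre space), so I carry the argument through for $\dim X = 2$, where the endpoints are explicit, assuming as one may that $X$ is $\mathbb{Q}$-factorial and klt. After a power of $f$, take an $f$-equivariant MMP $X = X_0 \dashrightarrow \cdots \dashrightarrow X_N$ by divisorial contractions (no flips on a surface), with $X_N$ a minimal model ($K_{X_N}$ nef) or a Mori fibre space and $f_N$ again int-amplified. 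The reduction to establish is that if $\pi\colon X \to X'$ is an $f$-equivariant divisorial contraction and $(X',\Delta')$ is an lc log Calabi--Yau pair with $f_{X'}$-invariant boundary, then $X$ admits one too: set $\Delta = \pi_*^{-1}\Delta' + aE$, $E = \Exc(\pi)$, with $a = -a(E;X',\Delta')$ forced by $K_X+\Delta \equiv \pi^*(K_{X'}+\Delta')$; log canonicity gives $a \le 1$, and $a \ge 0$ is the delicate point, where $f$-equivariance — which ties $E$ and its centre to an $f_{X'}$-invariant choice of $\Delta'$ — is used. (Equivariance is essential: $\mathrm{Bl}_pA$ of an abelian surface is not of Calabi--Yau type, consistently with its admitting no int-amplified endomorphism, since isogenies are unramified.) It then suffices to treat $X = X_N$.

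If $K_X$ is nef, then $K_X \equiv 0$. Indeed $f$ is finite, so $f^*K_X = K_X - R_f$ with $R_f \ge 0$, hence $(f^m)^*K_X = K_X - R_{f^m}$ with $R_{f^m} \ge 0$; pullback preserves nefness, so each $(f^m)^*K_X$ is nef, and intersecting with a fixed ample $H$ gives $0 \le (f^m)^*K_X \cdot H = K_X\cdot H - R_{f^m}\cdot H \le K_X\cdot H$, so these classes lie in a bounded slice of $\Nef(X)$ (bounded because $D \mapsto D\cdot H$ is a positive linear functional on $\Nef(X)\setminus\{0\}$, by the Hodge index theorem). Since $f$ is int-amplified, every eigenvalue of $f^*$ on $N^1(X)_{\mathbb R}$ has absolute value $> 1$, so the $f^*$-orbit of a nonzero class is unbounded; hence $K_X \equiv 0$. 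On a $\mathbb{Q}$-factorial klt surface $K_X \equiv 0$ forces $K_X \sim_{\mathbb Q} 0$, so $(X, 0)$ is an lc (even klt) log Calabi--Yau pair, with boundary trivially $f$-invariant.

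If $X \to S$ is a Mori fibre space and $\dim S = 0$, then $X$ is a klt del Pezzo surface of Picard number $1$, in particular of Fano type, hence of Calabi--Yau type (a suitably general $D \sim_{\mathbb Q} -K_X$ gives an lc pair $(X, D)$ with $K_X + D \sim_{\mathbb Q} 0$; after a power, $D$ can be taken $f$-invariant). If $\dim S = 1$, then $f$ descends to a finite surjective endomorphism of $S$, so $g(S) \le 1$. If $g(S) = 0$, then $X$ is a rational Mori fibre space over $\P^1$, which is of Fano type because $\P^1$ is (Fano type is preserved by Mori fibre spaces over a Fano-type base), and we conclude as before. If $g(S) = 1$, then $S$ is an elliptic curve and $X$ is a $\P^1$-bundle over it; among these, the ones that arise are exactly $\P(\mathcal O_S \oplus L)$ with $L$ torsion, whose zero- and infinity-sections $\sigma_0, \sigma_\infty$ are disjoint, and $(X, \sigma_0 + \sigma_\infty)$ is an snc log Calabi--Yau pair, since on a fibre $K_{\P^1} + 2\,\mathrm{pt} \sim 0$ and adjunction along $\sigma_0$ with $K_S \sim 0$ forces the residual class on $S$ to be torsion; this boundary is $f$-invariant after a power.

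The main obstacle, in increasing order of seriousness, is: the lower bound $a \ge 0$ in the reduction step, which is exactly where the dynamics of $f$ has to be converted into control of discrepancies; the input, in the $g(S)=1$ case, of which ruled surfaces over an elliptic curve admit an int-amplified endomorphism — the projectivised Atiyah bundle shows this is genuinely restrictive, as it admits none and is not of Calabi--Yau type; and, for the conjecture in dimension $\ge 3$, handling flips and, far more seriously, invoking abundance to deduce $K_{X_N} \equiv 0$ at a nef endpoint and the canonical bundle formula with lc fibres at a Mori fibre space, together with an induction giving Calabi--Yau type of the base (which inherits an int-amplified endomorphism). Finally, for the stronger conclusion that $X$ is of dense globally $F$-split type — which implies Calabi--Yau type — the Fano-type endpoints are automatically of globally $F$-regular type, abelian surfaces are ordinary for a dense set of primes, and toric endpoints such as Hirzebruch surfaces are $F$-split in every positive characteristic; the genuinely delicate case is once more the $\P^1$-bundle over an elliptic curve, where the reduction modulo $p$ need not be $F$-split on the nose and the boundary must be chosen to force the splitting.
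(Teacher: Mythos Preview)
Your central claim in the $g(S)=0$ case is wrong, and this is exactly where the paper's substantive work lies. It is \emph{not} true that a klt $\Q$-factorial Mori fibre space over $\P^1$ is of Fano type. A concrete counterexample: let $E$ be an elliptic curve, let $\iota$ act on $E\times\P^1$ by $(-1,-1)$, and set $X=(E\times\P^1)/\iota$. Then $X$ has eight $A_1$ points, is klt $\Q$-factorial with $\rho(X)=2$, carries a Mori fibre space $X\to E/\iota\cong\P^1$, and admits an int-amplified endomorphism (descended from $([n],z^m)$ with $m$ odd); but the quasi-\'etale cover $E\times\P^1\to X$ gives $K_X^2=0$, so $-K_X$ is not big and $X$ is not of Fano type. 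The paper's Theorem~\ref{covlem'} is designed precisely for this situation: when $R_f$ contains no fibre of $\pi\colon X\to\P^1$, one builds an equivariant quasi-\'etale cover $\widetilde{X}\to X$ with $\widetilde{X}$ a minimal ruled surface over an elliptic curve, settles that case, and descends the conclusion. The complementary case (some fibre lies in $R_f$) does give Fano type, via Lemma~\ref{good fibration}, but the dichotomy is essential.

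Your birational reduction step is also incomplete, as you note yourself: the inequality $a\ge 0$ is never proved, and without it one cannot lift the log Calabi--Yau structure along a divisorial contraction. The paper bypasses this entirely by working with dense global $F$-splitting instead of an explicit boundary: Theorem~\ref{lifting of F-splittness'} shows this property lifts along equivariant birational morphisms because the ramification of the int-amplified endomorphism absorbs the exceptional defect, and the theorem of Gongyo--Takagi (Theorem~\ref{conjecture holds for surface}) converts dense global $F$-splitting into Calabi--Yau type at the very end. So the paper's route is structurally different from yours---it never attempts to name a boundary---and yours has two independent gaps, the more serious of which (the $\P^1$-base case) you did not flag.
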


In order to attack Conjecture 1.2, we use reduction to positive characteristic and global $F$-splitting.
Global $F$-splitting is a global property of a projective variety over a perfect field of positive characteristic defined by the splitting of the absolute Frobenius morphism. 
Via reduction to positive characteristic, global $F$-splitting makes sense in characteristic zero as well: 
$X$ is said to be of \emph{dense globally $F$-split type} if its modulo $p$ reduction is globally $F$-split for infinitely many primes $p$. 
Schwede and Smith asked in \cite[Question 7.1]{SS} whether varieties of dense globally $F$-split type are of Calabi--Yau type or not. 
Thus the following conjecture, together with an affirmative answer to the question of Schwede and Smith, implies Conjecture 1.2. 

\begin{conj}\label{main conj}
If $X$ admits an int-amplified endomorphism,
then $X$ is of dense globally $F$-split type.
\end{conj}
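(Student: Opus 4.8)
The plan is to reduce to positive characteristic, run an equivariant minimal model program, and handle the two possible outputs --- a Mori fibre space and a minimal model with numerically trivial canonical divisor --- by induction on $\dim X$. Fix an int-amplified endomorphism $f\colon X\to X$; standard equivariant modifications reduce us to the case where $X$ is $\Q$-factorial and klt. Spreading out over a finitely generated $\Z$-subalgebra $A\subset k$ produces a model $\mathcal X\to\Spec A$ carrying $f_{A}$, and it suffices to prove that the reduction $\mathcal X_{p}$ is globally $F$-split for an infinite set of closed points $p\in\Spec A$. By the theorem of Meng recalled in $\S 3$, any MMP for $X$ is $f^{n}$-equivariant for a suitable positive integer $n$; fixing such an MMP
\[
X=X_{0}\dashrightarrow X_{1}\dashrightarrow\cdots\dashrightarrow X_{r},
\]
every step of which (a divisorial contraction, a flip, or the terminal contraction $X_{r}\to Z$) commutes with a power of $f$, we note that each $X_{i}$, as well as $Z$, again carries an int-amplified endomorphism. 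A flip is an isomorphism in codimension one, and global $F$-splitting is insensitive to such modifications; a divisorial contraction $X_{i}\to X_{i+1}$ has $f^{n}$-equivariant centre, which can therefore be taken compatibly split, so that a global $F$-splitting of the reduction of $X_{i+1}$ lifts to one of $X_{i}$. Hence it is enough to prove the conjecture for $X_{r}$, and we are in one of two cases: (a) $K_{X}$ is not pseudo-effective and $X_{r}\to Z$ is a Mori fibre space with $\dim Z<\dim X$; (b) $K_{X}$ is pseudo-effective, so that --- an int-amplified endomorphism forcing the numerical Kodaira dimension to vanish --- $K_{X_{r}}$ is nef and numerically trivial.

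In case (a) I argue by induction on dimension. The base $Z$ carries an int-amplified endomorphism, so by the inductive hypothesis the reduction $\mathcal Z_{p}$ is globally $F$-split for an infinite set $S$ of closed points. A general fibre of $X_{r}\to Z$ is a $\Q$-factorial klt variety of Fano type, hence of globally $F$-regular type, so for $p$ outside a finite set a general fibre of $\mathcal X_{r}\to\mathcal Z$ over $\mathcal Z_{p}$ is globally $F$-regular. I would then establish and apply a fibration criterion of the following shape: if $g\colon W\to Z$ is a contraction with $Z$ globally $F$-split, general fibre globally $F$-regular and $(W,0)$ klt, and if $W$ admits a $g$-equivariant int-amplified endomorphism, then $W$ is globally $F$-split. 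The idea is to pull a splitting section of $Z$ back to $W$, kill its defect along a general fibre using global $F$-regularity, and absorb the resulting discriminant by pulling back along a high power of the endomorphism, whose int-amplification makes the relevant sheaves positive. Applied over the infinite set $S$, this yields infinitely many $p$ for which the reduction of $X_{r}$ is globally $F$-split.

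In case (b), $K_{X_{r}}\equiv 0$. Here I invoke the Beauville--Bogomolov-type decomposition for klt varieties with numerically trivial canonical class: after a quasi-\'etale cover $X'\to X_{r}$ there is a splitting $X'\cong A\times Y$, with $A$ an abelian variety and $Y$ a klt Calabi--Yau variety with $h^{1}(Y,\mathcal O_{Y})=0$. Replacing $f$ by a power and passing to a further quasi-\'etale cover, the endomorphism respects this product and induces int-amplified endomorphisms of $A$ and of $Y$. Quasi-\'etale covers preserve dense globally $F$-split type --- Frobenius descends once $p$ is prime to the degree --- as do products, so it remains to treat $A$ and $Y$ separately. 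Abelian varieties are of dense globally $F$-split type, because an abelian variety defined over a number field has ordinary reduction at an infinite set of primes; this supplies the required $p$. For $Y$, an int-amplified endomorphism of a klt Calabi--Yau variety of vanishing irregularity is rigid: in the dimension range treated in this paper such $Y$ are toric or finite quotients of toric varieties, hence of globally $F$-regular --- a fortiori dense globally $F$-split --- type, and in general one expects the equivariant MMP together with the structure theory of endomorphisms of Calabi--Yau type varieties to reduce $Y$ again to toric building blocks.

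The principal obstacle is the control of the Calabi--Yau factor $Y$ in case (b): there is at present no classification, in arbitrary dimension, of the klt Calabi--Yau varieties of vanishing irregularity that admit an int-amplified endomorphism, and an unconditional treatment would require at least the abundance conjecture together with a structure theorem for such endomorphisms --- precisely the ingredient that confines the present paper to surfaces. The second, more technical, difficulty is the fibration criterion of case (a): global $F$-splitting, unlike global $F$-regularity, is not known to be an open condition in the characteristic, so a splitting of the base must be transported to the total space fibre by fibre rather than deformed over $\Spec A$, and it is here that $f$-equivariance --- and the positivity it carries through int-amplification --- is indispensable. A related, more minor, point is to verify that the $f^{n}$-equivariant centres of the divisorial contractions in the MMP can be chosen compatibly split, so that global $F$-splitting lifts through the steps of the first paragraph. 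Once these inputs are secured, the induction on dimension closes and Conjecture \ref{main conj} follows in full.
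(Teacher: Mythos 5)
This statement is stated in the paper as a \emph{conjecture}: the paper proves it only for surfaces (Theorem \ref{main thm'}), and your proposal does not close the gap in higher dimension either --- indeed you concede as much in your final paragraph, so what you have written is a programme rather than a proof. The decisive gap is your ``fibration criterion'' in case (a). The mechanism you sketch --- pull back a splitting of the base, then absorb the resulting defect by composing with the trace of a high iterate $f^{N}$ --- only works for the part of the defect supported on $\Supp R_{f^{N}}$. The defect coming from the base splitting is \emph{vertical}, and the whole difficulty, which the paper states explicitly in the introduction and which is documented in \cite[Section 7]{mat-yoshi}, is that $R_{f}$ may contain no vertical component at all, even when the base is $\mathbb{P}^{1}$. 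In that situation no power of $f$ produces ramification along fibres and there is nothing to absorb the discriminant into; your criterion as stated would in fact prove the conjecture in all dimensions by induction, which is a strong signal that it is not provable by the sketched method. The paper's route around this, in dimension two, is entirely different: when $R_{f}$ is purely horizontal it constructs (Theorem \ref{covlem'}) a quasi-\'etale cover $\widetilde{X}\to X$ fibred over an abelian variety, built from the index-one cover of the log Calabi--Yau pair $(Y,\Delta)$ given by the multiple-fibre divisor, and then checks global $F$-splitting of $\widetilde{X}$ directly (a ruled surface over an elliptic curve); when $R_{f}$ does contain a fibre, a separate argument (Lemma \ref{good fibration}) shows $X$ is of Fano type, hence of globally $F$-regular type. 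Your proposal contains neither of these ingredients, and without them case (a) does not go through even for surfaces.

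Two smaller points. First, your case (b) is actually easier than you make it: by Meng's theorem (cited as \cite[Theorem 1.5]{meng} and used via \cite[Lemma 9.3]{cmz}), a klt variety with pseudo-effective $K_{X}$ admitting an int-amplified endomorphism is $Q$-abelian, so no Calabi--Yau factor of positive dimension with vanishing irregularity ever appears; the ``principal obstacle'' you identify there is not one. Second, your treatment of divisorial contractions via ``compatibly split centres'' is not the right mechanism and is unjustified as written; the correct argument (Theorem \ref{lifting of F-splittness'}) is that the exceptional divisor is totally invariant, hence bounded above by $R_{f^{n}}$ for $n\gg 0$, so that composing the pulled-back splitting with the trace of $f^{n}$ yields an effective divisor --- here, unlike in your case (a), the defect $-E$ is exceptional and totally invariant, which is exactly why absorption works in the birational setting and fails in the fibration setting.
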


In this paper, we prove that Conjecture 1.3 holds in the surface case.
Since Gongyo and Takagi 
gave in \cite{gon-takagi} an affirmative answer to the question of Schwede and Smith in the surface case,
as a corollary of Conjecture 1.3, we also obtain Conjecture 1.2 in the surface case.

\begin{thm}[\textup{Theorem \ref{main thm'}}]\label{main thm}
Let $X$ be a normal surface over an algebraically closed field of characteristic zero admitting an int-amplified endomorphism.
Then $X$ is of dense globally $F$-split type, in particular, $X$ is of Calabi--Yau type.
\end{thm}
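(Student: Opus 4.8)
The plan is to reduce to a more tractable situation by running an $f^n$-equivariant minimal model program and then handling the finitely many possible outputs.  First I would invoke Meng's theorem (cited in the excerpt as \cite{meng}): after replacing $f$ by an iterate $f^n$, every step of an MMP for $X$ is $f$-equivariant.  The key observation is that being of dense globally $F$-split type should be stable under the operations appearing in the MMP when they are equivariant: if $\phi\colon X\to X'$ is an equivariant divisorial contraction or flip, then $X'$ again admits an int-amplified endomorphism, and a global $F$-splitting of (the reduction of) $X'$ together with control on the discrepancies should pull back to one on $X$, and conversely; similarly for the case of a Mori fibre space $X\to Z$, where $Z$ inherits an int-amplified endomorphism and one argues fibrewise.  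So I would set up a dévissage: prove the theorem for the terminal outputs of the MMP — (i) surfaces with nef canonical class (where after running to a minimal model we land on an abelian surface, a hyperelliptic surface, a K3, an Enriques surface, or a properly elliptic surface, but the existence of an int-amplified endomorphism severely restricts this, essentially forcing an abelian or hyperelliptic surface, or a bielliptic-type quotient), and (ii) Mori fibre spaces over a curve or a point ($\mathbb P^2$, Hirzebruch surfaces, $\mathbb P^1$-bundles over elliptic curves, and their contractions) — and then propagate back up.

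For the base cases I would argue as follows.  When $X$ is a (smooth) minimal surface admitting an int-amplified endomorphism, a classification of such surfaces (Nakayama–Zhang-type results, or Fujimoto–Nakayama for the polarized case, extended by Meng–Zhang to the int-amplified case) shows the Kodaira dimension is $-\infty$, $0$, or $1$; the case $\kappa=1$ is excluded because an elliptic fibration with positive-dimensional moduli cannot carry such an endomorphism, and $\kappa=0$ forces an abelian surface or a hyperelliptic (bielliptic) surface, possibly after passing to an equivariant étale cover.  Abelian varieties are globally $F$-split after reduction mod $p$ for the primes of ordinary reduction, which form a positive-density (in particular infinite) set — this is the crucial input, going back to the fact that an ordinary abelian variety has split Frobenius; hyperelliptic surfaces are quotients of abelian surfaces by free actions, and one checks the splitting descends.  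For $\kappa=-\infty$ one has rational or ruled surfaces: toric varieties ($\mathbb P^2$, Hirzebruch surfaces and their blow-downs) are globally $F$-split in every characteristic, and $\mathbb P^1$-bundles (or cone-type quotients) over an elliptic curve with ordinary reduction are globally $F$-split for the ordinary primes.  In each base case I would also produce the boundary $\Delta$ making $(X,\Delta)$ log Calabi–Yau — or simply quote Gongyo–Takagi \cite{gon-takagi} that dense globally $F$-split type implies Calabi–Yau type for surfaces — so the ``in particular'' clause comes for free once dense global $F$-splitting is established.

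The main obstacle, I expect, is the propagation step: showing that dense global $F$-splitting ascends and descends along the equivariant birational maps and fibrations of the MMP with uniform control over the set of good primes.  Descending a splitting along a proper birational morphism is delicate because a section of $F_*\mathcal O$ upstairs need not push forward to one downstairs unless one knows the morphism is ``crepant enough'' — here the int-amplified structure forces the relevant discrepancies to vanish along the contracted locus (the exceptional curves are preimages of fixed-type structure), which is exactly what makes the argument go through, but this must be checked carefully.  Ascending is comparatively easier: a compatible splitting on $X'$ pulls back provided $X'\to X$ does not introduce bad singularities, and one can arrange the boundary so that $(X,\Delta)$ is globally $F$-split by adjusting along the exceptional divisors.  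The other subtle point is the uniformity of the prime set: each step discards a density-zero (finite, or at worst a set excluded by an explicit modular condition) set of primes, and since the MMP is finite, the final set of good primes is still infinite — so one must track that the reductions mod $p$ of all the intermediate varieties and of the endomorphism behave well simultaneously for $p$ in a fixed infinite set, which is standard spreading-out but needs to be stated with care.
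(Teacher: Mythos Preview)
Your overall architecture---run an equivariant MMP and check the terminal outputs---matches the paper, and your propagation step is essentially the content of the paper's Theorem~\ref{lifting of F-splittness'}: a splitting on $X'$ is pulled back to $X$ by composing with the trace of the endomorphism, whose ramification divisor absorbs the exceptional discrepancies.  So that part, while you flag it as the main obstacle, is actually the more routine half.

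The genuine gap is in your base cases, specifically the Mori fibre space $X_r\to\mathbb{P}^1$.  You write that this case is handled because ``toric varieties ($\mathbb{P}^2$, Hirzebruch surfaces and their blow-downs) are globally $F$-split in every characteristic,'' but $X_r$ is only a $\Q$-factorial klt surface of relative Picard rank one over $\mathbb{P}^1$---there is no reason it should be smooth or toric.  In fact the paper cites explicit examples (from \cite[\S 7]{mat-yoshi}) of such Mori fibre spaces admitting int-amplified endomorphisms whose ramification divisor $R_f$ contains \emph{no} vertical component; for a toric surface the torus-invariant fibres would appear in $R_f$, so these examples are genuinely non-toric.  This is precisely the case the paper identifies as ``the main difficulty of the proof.''

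The paper's resolution is a dichotomy you do not anticipate.  If $\Supp R_f$ contains a fibre of $\pi$, one shows $X_r$ is of Fano type (Lemma~\ref{good fibration}) and hence of globally $F$-regular type.  If $\Supp R_f$ is purely horizontal, the paper proves a covering theorem (Theorem~\ref{covlem'}): there is a finite quasi-\'etale cover $\widetilde{X}\to X_r$ fitting equivariantly over a finite cover $A\to\mathbb{P}^1$ with $A$ an elliptic curve, and $\widetilde{X}$ is then a minimal ruled surface over $A$.  The construction passes through an index-one cover of a log Calabi--Yau pair $(\mathbb{P}^1,\Delta)$ built from the multiple-fibre data of $\pi$, and the quasi-\'etale property of $\mu_X$ is forced by the horizontality of $R_f$ together with the total invariance of any would-be branch divisor (Lemma~\ref{quasi-etale lemma}).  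Once you have $\widetilde{X}$ ruled over an elliptic curve, dense global $F$-splitting follows as you outlined, and descends along the quasi-\'etale $\mu_X$ by Proposition~\ref{q-etale remain F-splitness}.  Without this covering construction your argument does not close.
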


We remark that Theorem 1.4 gives an alternative proof of the result of Broustet and Gongyo.
We briefly explain how to prove Theorem 1.4.
The case where $K_X$ is $\Q$-linearly trivial is easily follows from Cascini, Meng and Zhang's result in \cite{cmz}.
If $K_X$ is not $\Q$-linearly trivial, every equivariant MMP ends up with a Mori fiber space (cf. {\cite{meng-zhang}, \cite{meng-zhang2}, \cite{meng}}).
In this case, using the following theorem, 
we may assume that $\pi \colon X \to Y$ is a Mori fiber space over a variety admitting an int-amplified endomorphism.

\begin{thm}[\textup{Theorem \ref{lifting of F-splittness'}}]\label{lifting of F-splittness}
Consider the following commutative diagram
\[
\xymatrix{
Z \ar[r]^{h} \ar@{-->}[d]_{\mu} & Z \ar@{-->}[d]^{\mu} \\
Z' \ar[r]_{h'} & Z', 
}
\]
where $Z$, $Z'$ are normal projective varieties, $\mu$ is a biratinal morphism or a small birational map, $h$ and $h'$ are int-amplified endomorphisms.
Then $Z$ is of dense globally $F$-split type if and only if so is $Z'$
\end{thm}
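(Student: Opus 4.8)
The plan is to reduce modulo $p$ and then treat separately the two cases of the statement, $\mu$ a small birational map and $\mu$ a birational morphism; in the morphism case one implication is formal and the other is where the endomorphism is really used. Fix a finitely generated $\Z$-subalgebra $A\subseteq k$ over which $Z,Z',\mu,h,h'$ and all the relevant data are defined and retain their properties. Then $Z$ is of dense globally $F$-split type precisely when the reduction $Z_{\mathfrak m}$ is globally $F$-split for a dense set of closed points $\mathfrak m\in\Spec A$, and likewise for $Z'$, so it suffices to show that, for all but finitely many $\mathfrak m$, the reduction $Z_{\mathfrak m}$ is globally $F$-split if and only if $Z'_{\mathfrak m}$ is. From now on everything is over $\var{\mathbb F}_p$ and we omit $\mathfrak m$. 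If $\mu$ is a small birational map, then $Z$ and $Z'$ share an open subset $U$ whose complement has codimension $\ge 2$ in each; since $\mathcal{H}om_{\mathcal O_W}(F^e_*\mathcal O_W,\mathcal O_W)$ is $(S_2)$ for any normal $W$ (it is the kernel of a map between powers of $\mathcal O_W$), a homomorphism $F^e_*\mathcal O_U\to\mathcal O_U$ extends uniquely to $F^e_*\mathcal O_W\to\mathcal O_W$, and the extension sends $1$ to $1$ as soon as the original does, by normality. Hence the existence of a Frobenius splitting depends only on $U$, and comparing $Z$ with $Z'$ settles this case without the endomorphisms. So from now on $\mu\colon Z\to Z'$ is a birational morphism, and $\mu_*\mathcal O_Z=\mathcal O_{Z'}$ since $Z'$ is normal.

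One implication is then immediate: if $\phi\colon F^e_*\mathcal O_Z\to\mathcal O_Z$ splits $F^e$ with $\phi(1)=1$, then $\mu_*\phi\colon F^e_*\mathcal O_{Z'}=\mu_*F^e_*\mathcal O_Z\to\mu_*\mathcal O_Z=\mathcal O_{Z'}$ splits $F^e$ on $Z'$. Again no endomorphism is needed; the substance of the theorem is the converse.

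So assume $Z'$ is globally $F$-split; we must split $F^e$ on $Z$. Write $\mu$ as the blowup of an ideal sheaf $J\subseteq\mathcal O_{Z'}$ (every projective birational morphism is such a blowup, with $V(J)$ equal to the non-isomorphism locus), and, using $h'$-equivariance of $\mu$, arrange $J$ to be $h'$-invariant. The key point, and the heart of the proof, is that after replacing $h'$ by an iterate, $J$ is compatibly Frobenius split for some global splitting of $Z'$: because $h'$ is int-amplified, the structure theory of equivariant contractions (cf. \cite{meng}, \cite{meng-zhang}) confines the $h'$-invariant subscheme $V(J)$ to the compatibly split part of a suitable Frobenius-split boundary. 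In the language of pairs, this upgrades the splitting of $Z'$ to a sharply $F$-pure pair $(Z',\Delta')$ with $(p^e-1)(K_{Z'}+\Delta')\sim 0$ whose log discrepancy along each $\mu$-exceptional valuation $E_j$ is at most $1$. Granting this, $Z=\mathrm{Bl}_J Z'$ is globally $F$-split by the standard results on Frobenius splittings of blowups along compatibly split ideals (Brion--Kumar); equivalently, the crepant pullback $K_Z+\Gamma=\mu^*(K_{Z'}+\Delta')$ has $\Gamma$ effective with coefficients in $[0,1]$, satisfies $(p^e-1)(K_Z+\Gamma)\sim 0$, and $(Z,\Gamma)$ is sharply $F$-pure by crepant birational invariance, so $Z$ is globally $F$-split.

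Letting $\mathfrak m$ range over a dense set of closed points of $\Spec A$ then completes the argument. The main obstacle is exactly the key claim in the previous paragraph: that the hypotheses ``$Z'$ globally $F$-split'' and ``$h'$ int-amplified'' can be combined to produce a Frobenius splitting of $Z'$ (or of an iterate) compatibly adapted to the $h'$-invariant exceptional data of $\mu$ — pushing the relevant subscheme, equivalently the exceptional divisorial valuations, into the compatibly split (non-klt) part of some Frobenius-split boundary — and doing so uniformly enough in $p$ to survive the reduction-modulo-$p$ dictionary. Everything else, namely the spreading out, the small-map case, and the pushforward implication, is formal.
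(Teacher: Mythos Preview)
Your proposal has a genuine gap at exactly the point you flag: the claim that an int-amplified $h'$ lets you upgrade an arbitrary global splitting of $Z'$ to one that is compatibly split along the $h'$-invariant center $V(J)$, equivalently to a sharply $F$-pure pair $(Z',\Delta')$ with log discrepancy $\le 1$ along every $\mu$-exceptional valuation. Nothing in \cite{meng}, \cite{meng-zhang} gives this; those papers control numerical classes and total invariance, not compatibility with Frobenius splittings. The Brion--Kumar results on blowups along compatibly split ideals do not apply in this generality (arbitrary birational morphisms of normal varieties, arbitrary ideals). So as written the argument is not a proof: the hard implication is reduced to an assertion of equal strength.

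The paper's proof is both simpler and proceeds differently. It does not look for a special splitting on $Z'$; it takes \emph{any} splitting $\psi\colon F_*\mathcal O_{Z'}\to\mathcal O_{Z'}$, views it via $K(Z)=K(Z')$ as a map $F_*K(Z)\to K(Z)$, and observes that the corresponding divisor $(1-p)K_Z+\mathrm{div}_Z(\alpha)$ fails effectivity only by an effective $\mu$-exceptional divisor $E$. The endomorphism is then used on $Z$, not on $Z'$: since the commutative square forces every $\mu$-exceptional prime divisor to be totally $h$-invariant, \cite[Theorem~3.3]{meng} gives $E\le R_{h}$ after replacing $h$ by an iterate. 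Composing $\psi$ with the trace $\mathrm{Tr}_h\colon h_*\mathcal O_Z\to\mathcal O_Z$ adds $pR_{h}$ to the divisor, yielding
\[
(1-p)K_Z+\mathrm{div}_Z(\alpha)+pR_{h}\ \ge\ -E+pR_{h}\ \ge\ 0,
\]
hence a genuine $\mathcal O_Z$-map $F_*h_*\mathcal O_Z\to\mathcal O_Z$ sending $1\mapsto\deg(h)$, a unit for the dense set of primes not dividing $\deg(h)$. This sidesteps entirely the problem of controlling discrepancies or compatibly splitting $V(J)$; the ramification of $h$, not a choice of boundary on $Z'$, absorbs the exceptional defect.
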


If $X$ is a surface and the support of the ramification divisor on $X$ contains a vertical component of $\pi$,
then $X$ is of globally $F$-split type (cf.~Lemma \ref{good fibration}).
In general, the ramification divisor of $X$ may not contain any vertical component of $\pi$ even if $Y$ is a projective line (see {\cite[Section 7]{mat-yoshi}}),
and this is the main difficulty of the proof of Theorem \ref{main thm}.
We overcome this difficulty by using the following covering theorem. 

\begin{thm}[\textup{Theorem \ref{covlem'}}]\label{covlem}
Consider the following commutative diagram
\[
\xymatrix{
X \ar[r]^{f} \ar[d]_{\pi} & X \ar[d]^{\pi} \\
Y \ar[r]_{g} & Y, 
}
\]
where $X$, $Y$ are klt $\Q$-factorial normal projective varieties, $\pi$ is a Mori fiber space and
$f$ is an int-amplified endomorphism.
Assume that the ramification divisor of $X$ does not contain any vertical component of $\pi$.  
Then we have the equivariant  commutative  diagram 
\[
\xymatrix{
f \acts X \ar[d]_{\pi}& \ar[l]_{\mu_{X}} \widetilde{X} \ar[d]^{ \widetilde{\pi}} \racts \widetilde{f} \\
g \acts Y & \ar[l]^{\mu_{Y}} A \racts g_{A},
}
\]
where $A$ is an abelian variety, $\mu_{Y}$ is a finite surjective morphism,
$ \widetilde{X}$ is a normal projective variety, $\mu_{X}$ is a finite surjective \'etale in codimension one morphism, and  
$ \widetilde{\pi}$ is an algebraic fiber space.  
Moreover, $ \widetilde{X}$ is the normalization of the main component of $X {\times}_{Y}A$. 
\end{thm}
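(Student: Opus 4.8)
The plan is to take $A$ to be a finite abelian cover of $Y$ whose ramification is exactly that prescribed by the multiple fibers of $\pi$, and then to define $\widetilde X$ as the normalization of the main component of $X\times_{Y}A$, as in the statement, verifying each listed property directly. Since $\pi$ is a Mori fiber space and $f$ is int-amplified, $g$ is forced to be an int-amplified endomorphism of the $\Q$-factorial klt normal projective variety $Y$, with $\dim Y<\dim X$ (see \cite{meng}). For each prime divisor $D\subset Y$ write $\pi^{*}D=\sum_{k}a_{k}(D)\,W_{k}$, where the $W_{k}$ are the vertical prime divisors of $X$ over $D$, and set $\mathfrak m(D):=\gcd_{k}a_{k}(D)$; these multiplicities are uniformly bounded and equal to $1$ for all but finitely many $D$. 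Put $\Delta:=\sum_{D}\bigl(1-\tfrac{1}{\mathfrak m(D)}\bigr)D$, which is a klt boundary on $Y$.

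The heart of the matter --- and the step I expect to be the main obstacle --- is to show that $(Y,\Delta)$ is \emph{$\Q$-abelian}: there is a finite surjective morphism $\mu_{Y}\colon A\to Y$ with $A$ an abelian variety and $K_{A}=\mu_{Y}^{*}(K_{Y}+\Delta)$, along which $g$ lifts to an endomorphism $g_{A}$. This is where the hypothesis on $R_{f}$ is used. Comparing valuations along a vertical prime divisor $W$ of $X$ lying over a prime $D'\subset Y$ in the identity $\pi\circ f=g\circ\pi$ shows that the ramification index of $f$ along $W$ equals $e\cdot a_{W}(D')/a_{f(W)}(D)$, where $D:=g(D')$ and $e$ is the ramification index of $g$ along $D'$. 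Since $R_{f}$ contains no vertical component this index is $1$, so $a_{f(W)}(D)=e\cdot a_{W}(D')$ for every such $W$, and passing to greatest common divisors gives $\mathfrak m(D)/\gcd(\mathfrak m(D),e)\mid\mathfrak m(D')$, which is precisely the condition for $g$ to be an endomorphism of the orbifold $(Y,\Delta)$. Applying the same relation to all iterates $f^{n}$ (whose ramification divisors are again vertical-free) and using that the multiplicities $a_{\bullet}(\bullet)$ are uniformly bounded, a boundedness argument via the orbifold Riemann--Hurwitz formula --- combined with the pseudo-effectivity of $-(K_{Y}+\Delta)$, which follows from the Fano fibers of $\pi$ and the canonical bundle formula --- yields $K_{Y}+\Delta\equiv 0$. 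Given this, the structure theory of (int-amplified) endomorphisms of klt pairs with numerically trivial log canonical class (cf.\ \cite{meng}, \cite{meng-zhang}, \cite{cmz}) shows that $(Y,\Delta)$ is $\Q$-abelian, and a Fakhruddin-type argument --- possibly after replacing $A$ by an isogenous abelian variety --- produces the lift $g_{A}$; moreover $g_{A}$ is int-amplified, since $\mu_{Y}^{*}$ of an ample witness for $g$ is an ample witness for $g_{A}$.

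Granting $\mu_{Y}\colon A\to Y$, let $\widetilde X$ be the normalization of the main component $Z$ of $X\times_{Y}A$, with induced morphisms $\widetilde\pi\colon\widetilde X\to A$ and $\mu_{X}\colon\widetilde X\to X$. As $A$ is integral and dominates $Y$, the generic fiber of $X\times_{Y}A\to Y$ equals $X_{\eta}\times_{k(Y)}k(A)$, which is integral because the generic fiber $X_{\eta}$ of $\pi$ is geometrically connected (as $\pi_{*}\mathcal O_{X}=\mathcal O_{Y}$) and normal, hence, in characteristic zero, geometrically integral. Thus $Z$ is the unique component of $X\times_{Y}A$ dominating $Y$, $\widetilde X$ is integral and projective, and $\mu_{X}$ is finite surjective, being the restriction to $Z$ of the finite surjective morphism $X\times_{Y}A\to X$. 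A general fiber of $\widetilde\pi$ is a general fiber of $\pi$ --- the base change is an isomorphism on fibers over the generic point of $Y$, and general fibers of $\pi$, being klt Fano varieties, are already normal --- hence connected, and since $A$ is normal this forces $\widetilde\pi_{*}\mathcal O_{\widetilde X}=\mathcal O_{A}$, so $\widetilde\pi$ is an algebraic fiber space. Finally $\mu_{X}$ is \'etale in codimension one: over the generic point of $Y$ it is an isomorphism on fibers, so there is no horizontal ramification divisor; and for a vertical prime $W$ of $X$ over a divisor $D\subset Y$ and a prime $D_{i}$ of $A$ over $D$, choosing uniformizers with $\mathcal O_{Y,D}=k[[u]]$, $\mathcal O_{A,D_{i}}=k[[v]]$ where $u=v^{e_{D}}$, and $\mathcal O_{X,W}=k[[w]]$ where $u=w^{a_{W}(D)}$, the relevant complete local ring of $X\times_{Y}A$ is $k[[w,v]]/(w^{a_{W}(D)}-v^{e_{D}})$, whose normalization is unramified over $\mathcal O_{X,W}$ precisely because $e_{D}=\mathfrak m(D)$ divides $a_{W}(D)$; so there is no vertical ramification divisor either.

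It remains to lift $f$. The identities $\pi\circ f=g\circ\pi$ and $\mu_{Y}\circ g_{A}=g\circ\mu_{Y}$ show that $(f,g_{A})$ defines an endomorphism of $X\times_{Y}A$ preserving the unique dominant component $Z$, hence inducing an endomorphism $\widetilde f$ of $\widetilde X$ with $\mu_{X}\circ\widetilde f=f\circ\mu_{X}$ and $\widetilde\pi\circ\widetilde f=g_{A}\circ\widetilde\pi$; for an ample Cartier divisor $H$ on $X$ with $f^{*}H-H$ ample, $\mu_{X}^{*}H$ is ample and $\widetilde f^{*}(\mu_{X}^{*}H)-\mu_{X}^{*}H=\mu_{X}^{*}(f^{*}H-H)$ is ample, so $\widetilde f$ is int-amplified. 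This yields the asserted equivariant diagram, with $\widetilde X$ the normalization of the main component of $X\times_{Y}A$. The one genuinely hard input is the $\Q$-abelianness of $(Y,\Delta)$ established in the second paragraph; everything else is formal manipulation with fiber products, normalization, and ampleness.
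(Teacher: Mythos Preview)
Your architecture matches the paper's: define the multiplicity divisor $\Delta=\sum_D(1-1/\mathfrak m(D))D$ on $Y$, show $K_Y+\Delta$ is numerically trivial, produce an equivariant abelian cover $A\to Y$ adapted to $\Delta$, and check that the normalized main component $\widetilde X$ of $X\times_Y A$ is quasi-\'etale over $X$. But the middle two steps, which you correctly flag as the hard part, are handled too loosely.

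For $K_Y+\Delta\equiv 0$: since $\pi$ is a Mori fiber space with relative Picard number one, each $\pi^{-1}(D)$ is in fact an \emph{irreducible} divisor (Lemma~\ref{irreducible fiber lemma}), so your gcd reduces to a single multiplicity $m_D$ and your identity $a_{f(W)}(D)=e\cdot a_W(D')$ becomes $e\,m_{D'}=m_D$. From this one checks \emph{directly} that $g^*\Delta-\Delta=R_g$, hence $g^*(K_Y+\Delta)\sim K_Y+\Delta$, and int-amplifiedness of $g$ gives numerical triviality at once; your ``boundedness via orbifold Riemann--Hurwitz'' is neither needed nor, as written, an argument. For the cover: your assertion that $(Y,\Delta)$ is klt is unproved, and the blanket appeal to ``structure theory of klt pairs'' does not produce an abelian $A$ with the specific ramification indices $\mathfrak m(D)$ that your local quasi-\'etale computation later assumes---the relation $K_A=\mu_Y^*(K_Y+\Delta)$ is only a linear-equivalence constraint on $R_{\mu_Y}$, not an equality of divisors. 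The paper fills exactly this gap by a two-step construction: first the explicit index-one cover $\mu_{Y_1}\colon Y_1\to Y$ (Lemma~\ref{index-one covering lem}), for which $R_{\mu_{Y_1}}=\mu_{Y_1}^*\Delta$ and $K_{Y_1}\sim 0$ hold as divisors because $\Delta$ has standard coefficients, and to which $g$ lifts by the universal property; then, after proving $Y_1$ is klt (by first showing the induced $\mu_{X_1}\colon X_1\to X$ is quasi-\'etale via the dynamical Lemma~\ref{quasi-etale lemma}, so $X_1$ is klt with $-K_{X_1}$ relatively ample, and applying Ambro's canonical bundle formula along $X_1\to Y_1$), one invokes \cite[Theorem 5.2]{meng} to obtain a quasi-\'etale abelian cover $A\to Y_1$. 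The composite $A\to Y$ then has precisely the ramification your local computation needs. Your complete-local-ring argument for quasi-\'etaleness of $\mu_X$ is a valid alternative to the paper's dynamical one (which instead shows any vertical component of $R_{\mu_X}$ would be totally $\widetilde f$-invariant, hence would project to a totally $g_A$-invariant divisor on $A$, contradicting that $g_A$ is int-amplified and quasi-\'etale), but it only goes through once $\mu_Y$ has been built with the correct ramification.
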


Suppose the ramification divisor of $X$ does not contain any vertical component of $\pi$, and apply Theorem 1.6 to this $X$.
Then $X$ is of dense globally $F$-split type if and only if $\widetilde{X}$ is of dense globally $F$-split type (cf. Proposition 2.9).
Since $\widetilde{X}$ is a fiber space over an abelian variety, it is not so difficult to show that $\widetilde{X}$ is of dense globally $F$-split type.
In conclusion, we have Theorem 1.4.

As another corollary of Theorem 1.6, we obtain the following theorem.

\begin{thm}[\textup{Theorem \ref{Fano type thm'}}]\label{Fano type thm}
Let $X$ be a klt normal projective surface admitting an int-amplified endomorphism.
Then $X$ is of Fano type if and only if the \'etale fundamental group of the smooth locus of $X$ is finite.
\end{thm}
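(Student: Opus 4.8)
\emph{Proof idea.} The plan is to prove the two implications separately. For the ``only if'' direction one does not need the endomorphism at all: if $X$ is of Fano type then the \'etale fundamental group $\pi_1^{\text{\'et}}(X_{\mathrm{sm}})$ of the smooth locus is finite by the finiteness theorem for fundamental groups of smooth loci of Fano type surfaces (Gurjar--Zhang; Xu in higher dimension). So the real work is in the ``if'' direction, which I would run through the $f^n$-equivariant minimal model program of \cite{meng}.

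Assume $\pi_1^{\text{\'et}}(X_{\mathrm{sm}})$ is finite and run an equivariant MMP $X = X_1 \to X_2 \to \dots \to X_m = X_0$; since $X$ is a surface this is a chain of $K_{X_i}$-negative divisorial contractions ending with a minimal model or a Mori fiber space $\pi_0 \colon X_0 \to B$. First I would exclude the minimal-model case. If $K_{X_0}$ were nef, then since $X_0$ carries an int-amplified endomorphism one has $K_{X_0} \equiv 0$ (cf.\ \cite{cmz}, \cite{meng-zhang}), hence $\kappa(X) = 0$; the minimal resolution $\widetilde{X}$ of $X$ then satisfies $\kappa(\widetilde{X}) = 0$ and inherits a non-invertible endomorphism lifting $f$, so by the Enriques classification together with the fact that $K3$ and Enriques surfaces admit no non-invertible endomorphism, the minimal model of $\widetilde{X}$ is abelian or bielliptic, whence $\pi_1(\widetilde{X})$ is infinite; but $X_{\mathrm{sm}}$ is (isomorphic to) the complement of the exceptional divisor in $\widetilde{X}$, so $\pi_1^{\text{\'et}}(X_{\mathrm{sm}}) \twoheadrightarrow \pi_1^{\text{\'et}}(\widetilde{X})$ and $\pi_1^{\text{\'et}}(X_{\mathrm{sm}})$ would be infinite, a contradiction. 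Hence the MMP ends with a Mori fiber space $\pi_0 \colon X_0 \to B$, with $B$ a point or a smooth curve; and if $B$ had positive genus, the composite $X \to X_0 \to B$ would be a surjective morphism with connected fibres, forcing a surjection $\pi_1^{\text{\'et}}(X_{\mathrm{sm}}) \twoheadrightarrow \pi_1^{\text{\'et}}(B)$ onto an infinite group --- again impossible. (Alternatively one can use Theorem~\ref{covlem} here: were the ramification divisor of $X_0$ to omit every vertical component of $\pi_0$, Theorem~\ref{covlem} would produce a finite surjection $A \to B$ from an abelian variety, forcing $B$ to be a point or an elliptic curve; the case of a vertical ramification component one treats directly.) So $B$ is a point or $\P^1$.

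It then remains to upgrade $X_0$, and then $X$, to Fano type. If $B$ is a point, $-K_{X_0}$ is ample and $(X_0, 0)$ is a klt log Fano pair. If $B = \P^1$, then $-K_{X_0}$ is $\pi_0$-ample by the definition of a Mori fiber space, so $X_0$ is of Fano type over $\P^1$, and since $\P^1$ is of Fano type, $X_0$ is of Fano type (``of Fano type over a Fano type base'' $\Rightarrow$ ``of Fano type''; e.g.\ Prokhorov--Shokurov). Finally each contraction $X_i \to X_{i+1}$ is $K_{X_i}$-negative, so $-K_{X_i}$ is relatively ample and $X_i$ is of Fano type over $X_{i+1}$ with zero boundary; propagating this up the tower by transitivity of being relatively of Fano type, $X$ is of Fano type over $X_0$, and hence, $X_0$ being of Fano type, $X$ is of Fano type. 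The step I expect to be the main obstacle is the minimal-model exclusion: one must correctly relate $\pi_1^{\text{\'et}}$ of the smooth locus of $X$ to that of a smooth model (the useful surjection does go the right way, but this needs the open-immersion argument), and one must know precisely which surfaces of Kodaira dimension zero admit a non-invertible endomorphism --- namely that only abelian and bielliptic surfaces (and their klt quotients) do, $K3$ and Enriques surfaces being excluded. The remaining bookkeeping with relative Fano type is routine.
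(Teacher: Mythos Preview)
Your argument has a genuine gap in the case $B=\P^1$. You assert that a klt Mori fiber surface $X_0\to\P^1$ is automatically of Fano type via ``of Fano type over a Fano type base $\Rightarrow$ of Fano type''. This implication holds for birational contractions but is \emph{false} for fibrations, and no such statement is available from Prokhorov--Shokurov. Concretely: let $E$ be a general elliptic curve and let $\Z/2$ act on $E\times\P^1$ by $(e,z)\mapsto(-e,-z)$. The quotient $X_0$ is a $\Q$-factorial klt surface with eight $A_1$ points and $\rho(X_0)=2$; the induced map $X_0\to E/[-1]\cong\P^1$ is a Mori fiber space. The quotient map $E\times\P^1\to X_0$ is quasi-\'etale, so $(-K_{X_0})^2=\tfrac12(-K_{E\times\P^1})^2=0$ and $-K_{X_0}$ is not big; hence $X_0$ is not of Fano type. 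Moreover $X_0$ carries an int-amplified endomorphism (descend $[3]\times(z\mapsto z^3)$), so the endomorphism hypothesis does not exclude it either. What does exclude it is the finiteness of $\pi_1^{\text{\'et}}(X_{\mathrm{sm}})$, which you never invoke once you have reached $B=\P^1$. The paper's proof uses this hypothesis in an essential way: if $R_{f_r}$ contains a vertical fibre, Lemma~\ref{good fibration} gives Fano type directly; if not, Theorem~\ref{covlem} produces a quasi-\'etale cover of $X_r$ by a ruled surface over an elliptic curve, so $\pi_1^{\text{\'et}}((X_r)_{\mathrm{sm}})$ is infinite, and a short totally-invariant-point argument forces $r=1$, contradicting the standing hypothesis on $X$.

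A secondary issue: in ruling out the nef-$K_{X_0}$ case you claim that $f$ lifts to the minimal resolution of $X$. Non-invertible endomorphisms of singular surfaces need not lift to resolutions. The correct route (as in the paper) is that a klt surface with int-amplified endomorphism and $K_X$ pseudo-effective is $Q$-abelian by \cite[Lemma~9.3]{cmz}, hence admits a quasi-\'etale abelian cover, so $\pi_1^{\text{\'et}}(X_{\mathrm{sm}})$ is already infinite. Your propagation of Fano type along the birational steps $X_i\to X_{i+1}$ is fine; it is only the fibration step that fails.
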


The organization of this paper is as follows.
$\S$ 2 is a preliminary section.
We summarize the statements of equivariant MMPs in $\S$ 3.
In $\S$ 4, we develope the general theory of global $F$-splitting of varieties appearing in an equivarinant MMP,
and prove Theorem \ref{lifting of F-splittness} and Theorem \ref{covlem}.
In $\S$ 5, we give the proof of \ref{main thm}.
$\S$ 5 is an only section we assume that $X$ is a surface.



\vspace{15pt}
{\bf Notation and Terminology.}
\begin{itemize}
\item A variety over a filed $k$ is a geometrically integral separated scheme of finite type over $k$.
A $k$-scheme $X$ is essentially of finite type over $k$ if $X$ is a localization of some scheme of finite type over $k$. 
A $\Q$-Cartier divisor (resp. $\R$-Cartier divisor) on a variety $X$ is an element of $(\mathrm{CDiv} X) {\otimes}_{\Z}\Q$ (resp. $(\mathrm{CDiv} X) {\otimes}_{\Z}\R$),
where $\mathrm{CDiv} X$ is the group of Cartier divisors on $X$.
When $X$ is normal, these groups are embedded in $\mathrm{Div}(X) {\otimes}_{\Z}\Q$ (resp. $\mathrm{Div}(X) {\otimes}_{\Z}\R$)
where $\mathrm{Div}(X)$ is the group of Weil divisors on $X$.
An element of $\mathrm{Div}(X) {\otimes}_{\Z}\Q$ (resp.  $\mathrm{Div}(X) {\otimes}_{\Z}\R$) is called $\Q$-Weil divisor (resp. $\R$-Weil divisor).
Linear equivalence and $\Q$-linear equivalence are denoted by $\sim$ and $\sim_{\Q}$, respectively. For $\Q$-Weil divisors $D$ and $E$, $D \sim E$ means $D-E$ is a principal divisor.
\item Let $X$ be a projective variety over an algebraically closed field.
\begin{itemize}
\item $N^{1}(X)$ is the group of Cartier divisors modulo numerical equivalence
(a Cartier divisor $D$ is numerically equivalent to zero, which is denoted by $D\equiv0$, if $(D\cdot C)=0$ for all irreducible curves $C$ on $X$).
\item $N_{1}(X)$ is the group of $1$-cycles modulo numerical equivalence
(a $1$-cycle $ \alpha$ is numerically zero if $(D\cdot \alpha)=0$ for all Cartier divisors $D$).
By definition, $N^{1}(X)$ and $N_{1}(X)$ are dual to each other.
\end{itemize}
\item A morphism $f \colon X \to X$ from a projective variety $X$ to itself is called self-morphism of $X$ or endomorphism on $X$.
If it is surjective, then it is a finite morphism.
\item A morphism $f \colon X \to Y$  between varieties is called an algebraic fiber space if $f$ is proper and $f_{*} \mathcal{O}_{X}= \mathcal{O}_{Y}$.
\item A morphism $f \colon X \to Y$  between varieties is called quasi-\'etale if $f$ is \'etale at every codimension one point of $X$.
\item Let $f \colon X \to Y$ be a finite separable surjective morphism between normal varieties. The ramification divisor of $f$ is denoted by $R_{f}$.
\item The Picard number of a projective variety $X$ is denoted by $\rho(X)$.
\item The function field of a variety $X$ is denoted by $K(X)$.
\item Let $f \colon X \to X$ be an endomorphism of a variety $X$.  A subset $S\subset X$ is called totally invariant under $f$ if $f^{-1}(S)=S$ as sets.
\item Consider the commutative diagram
\[
\xymatrix{
X \ar@{-->}[r]^{\pi} \ar[d]_{f} & Y \ar[d]^{g} \\
X \ar@{-->}[r]_{\pi} & Y,
}
\]
where $f, g$ are surjective morphisms and $\pi$ is a dominant rational map.
We write this diagram as 
\[
\xymatrix{
f \acts X \ar@{-->}[r]^{\pi}& Y \racts g.
}
\]
We say a commutative diagram is equivariant if each object is equipped with an endomorphism and the morphisms are equivariant with respect to these 
endomorphisms.
\item Let $X$ be a normal projective variety. A $K_{X}$-negative extremal ray contraction $\pi \colon X  \to Y$
is called of fiber type or a Mori fiber space if $\dim Y< \dim X$.
\end{itemize}

\begin{ack}
The author wishes to express his gratitude to his supervisor Professor Shunsuke Takagi for his encouragement, valuable advice and suggestions. He is also
grateful to Professor Yoshinori Gongyo, Professor Andreas H\"{o}ring, Professor Am\"{e}al Broustet, Professor S\'{e}bastien
Boucksom, Professor Makoto Enokizono, Dr.~Kenta Sato, Dr.~Kenta Hashizume, Dr.~Yohsuke Matsuzawa, Dr.~Sho Ejiri, Dr.~Teppei Takamatsu, Professor Osamu Fujino, Professor Noboru Nakayama for their helpful comments and suggestions. 
This work was supported by the Program for Leading Graduate Schools, MEXT, Japan.
\end{ack}

\section{Preliminaries}

\subsection{Varieties of Fano type and Calabi--Yau type}
In this paper, we use the following terminology.

\begin{defn}[{cf.~\cite[Definition 2.34]{komo}, \cite[Remark 4.2]{SS}}]
Let $X$ be a normal variety over a field $k$ of arbitary characteristic and $\Delta$ be an effective $\Q$-Weil divisor on $X$ such that $K_X+\Delta$ is $\Q$-Cartier.
$\pi \colon Y \to X$ be a birational morphism from a normal variety $Y$. 
Then we can write
\[
K_Y=\pi^*(K_X+\Delta)+\sum_{E}(a(E,X,\Delta)-1)E,
\]
where $E$ runs through all prime divisors on $Y$.
We say that the pair $(X,\Delta)$ is {\em log canonical} or {\em lc}, for short (resp.,~{\em Kawamata log terminal} or {\em klt}, for short) if $a(E,X,\Delta) \geq 0$ (resp., $a(E,X,\Delta)>0$) for every prime divisor $E$ over $X$. 
If $\Delta=0$, we simply say that $X$ is log canonical (resp., klt).
\end{defn}

\begin{defn}[cf. {\cite[Lemma-Definition 2.6]{prokshok-mainII}}]
Let $X$ be a normal projective variety over a field and $\Delta$ be an effective $\Q$-Weil divisor on $X$ such that $K_X+\Delta$ is $\Q$-Cartier.
\begin{enumerate}
    \item We say that $(X,\Delta)$ is {\em log Fano} if $-(K_X+\Delta)$ is ample $\Q$-Cartier and $(X,\Delta)$ is klt.
    We say that $X$ is of {\em Fano type} if there exists an effective $\Q$-Weil divisor $\Delta$ on $X$ such that $(X,\Delta)$ is log Fano. 
    \item We say that $(X,\Delta)$ is {\em log Calabi--Yau} if $K_X+\Delta \sim_{\Q} 0$ and $(X,\Delta)$ is log canonical.
    We say that $X$ is of {\em Calabi--Yau type} if there exists an effective $\Q$-Weil divisor $\Delta$ on $X$ such that $(X,\Delta)$ is log Calabi--Yau type.
\end{enumerate}

\end{defn}

The reader is referred to \cite[Lemma-Definition 2.6]{prokshok-mainII} for more details. 

\subsection{Globally $F$-regular and $F$-split varieties}
In this subsection, we review the definition and basic properties of {\em globally $F$-regularity} and {\em globally $F$-splitting}.

A field $k$ of prime characteristic $p$ is called {\em $F$-finite} if $[k:k^p] < \infty$.

\begin{defn}[\textup{\cite[Definition 3.1]{SS}}]
Let $X$ be a normal projective variety defined over an $F$-finite field of characteristic $p>0$.
\begin{enumerate}
    \item We say that $X$ is {\em globally $F$-split} if the Frobenius map
    \[
    \mathcal{O}_X \to F_*\mathcal{O}_X
    \]
    splits as an $\mathcal{O}_X$-module homomorphism. \\
    \item We say that $X$ is {\em globally $F$-regular} if for every effective Weil divisor $D$ on $X$, there exists $e \in \Z_{>0}$ such that the composition map
    \[
    \mathcal{O}_X \to F^e_*\mathcal{O}_X \hookrightarrow F^e_*\mathcal{O}_X(D)
    \]
    of the $e$-times iterated Frobenius map $\mathcal{O}_X \to F^e_*\mathcal{O}_X$ with a natural inclusion $F^e_*\mathcal{O}_X \hookrightarrow F^e_*\mathcal{O}_X(D)$ splits as an $\mathcal{O}_X$-module homomorphism.
\end{enumerate}

\end{defn}

Now we briefly explain how to reduce things from characteristic zero to characteristic $p > 0$. 
The reader is referred to \cite[Chapter 2]{HH} and \cite[Section 3.2]{MS} for further details. 

Let $X$ be a normal variety over a field $k$ of characteristic zero and $D=\sum_i d_i D_i$ be a $\mathbb{Q}$-Weil divisor on $X$. 
Choosing a suitable finitely generated $\mathbb{Z}$-subalgebra $A$ of $k$, 
we can construct a scheme $X_A$ of finite type over $A$ and closed subschemes $D_{i, A} \subsetneq X_A$ such that 
there exists isomorphisms 
\[\xymatrix{
X \ar[r]^{\cong \hspace*{3em}} &  X_A \times_{\Spec \, A} \Spec \, k\\
D_i \ar[r]^{\cong \hspace*{3em}} \ar@{^{(}->}[u] & D_{i, A} \times_{\Spec \, A} \Spec \, k. \ar@{^{(}->}[u]\\
}\]
Note that we can enlarge $A$ by localizing at a single nonzero element and replacing $X_A$ and $D_{i,A}$ with the corresponding open subschemes. 
Thus, applying the generic freeness \cite[(2.1.4)]{HH}, we may assume that $X_A$ and $D_{i, A}$ are flat over $\Spec \, A$.
Enlarging $A$ if necessary, we may also assume that $X_A$ is normal and $D_{i, A}$ is a prime divisor on $X_A$. 
Letting $D_A:=\sum_i d_i D_{i,A}$, we refer to $(X_A, D_A)$ as a \textit{model} of $(X, D)$ over $A$.   
Given a closed point $\mu \in \Spec \, A$, we denote by $X_{\mu}$ (resp., $D_{i, \mu}$) the fiber of $X_A$ (resp., $D_{i, A}$) over $\mu$.  
Then $X_{\mu}$ is a scheme of finite type over the residue field $\kappa(\mu)$ of $\mu$, which is a finite field.  
Enlarging $A$ if necessary, we may assume that  $X_{\mu}$ is a normal variety over $\kappa(\mu)$, $D_{i, \mu}$ is a prime divisor on $X_{\mu}$ and consequently $D_{\mu}:=\sum_i d_i D_{i, \mu}$ is a $\mathbb{Q}$-divisor on $X_{\mu}$ for all closed points $\mu \in \Spec \, A$. 

Given a morphism $f:X \to Y$ of varieties over $k$ and a model $(X_A, Y_A)$ of $(X, Y)$ over $A$,  after possibly enlarging $A$, we may assume that $f$ is induced by a morphism $f_A :X_A \to Y_A$ of schemes of finite type over $A$. 
Given a closed point $\mu \in \Spec \, A$, we obtain a corresponding morphism $f_{\mu}:X_{\mu} \to Y_{\mu}$ of schemes of finite type over $\kappa(\mu)$. 
If $f$ is projective (resp. finite), after possibly enlarging $A$, we may assume that $f_{\mu}$ is projective (resp. finite) for all closed points $\mu \in \Spec \, A$. 

We denote by $X_{\bar{\mu}}$ the base change of $X_{\mu}$ to the algebraic closure $\overline{\kappa(\mu)}$ of $\kappa(\mu)$. 
Similarly for $D_{\bar{\mu}}$ and $f_{\bar{\mu}}:X_{\bar{\mu}} \to Y_{\bar{\mu}}$.
Note that $(X_{\mu}, D_{\mu})$ is globally $F$-regular (resp. globally $F$-split)  if and only if so is $(X_{\bar{\mu}}, D_{\bar{\mu}})$. 

\begin{defn}
Let the notation be as above. Suppose that $X$ is a normal projective variety over a field of characteristic zero. 
\begin{enumerate}
\item $X$ is said to be of \textit{dense globally $F$-split type} if for a model of $X$ over a finitely generated $\mathbb{Z}$-subalgebra $A$ of $k$, there exists a dense subset of closed points $W \subseteq \Spec \, A$ such that $X_{\mu}$ is globally $F$-split for all $\mu \in W$. 
\item $X$ is said to be of \textit{globally $F$-regular type} if for a model of $X$ over a finitely generated $\mathbb{Z}$-subalgebra $A$ of $k$, there exists a dense open subset of closed points $W \subseteq \Spec \, A$ such that $X_{\mu}$ is globally $F$-regular for all $\mu \in W$. 
\end{enumerate}
This definition is independent of the choice of a model. 
\end{defn}

The following two theorems are very important in this paper.

\begin{thm}[{\cite[Theorem 5.1]{SS}}]\label{Fano implies F-regular}
Let $X$ be a normal projective variety defined over a field of characteristic zero.
If $X$ is of Fano type, then $X$ is of globally $F$-regular type.
\end{thm}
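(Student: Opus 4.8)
The plan is to reduce the statement to a local fact about the affine cone over $X$, and then to apply the reduction of klt singularities to strongly $F$-regular singularities in positive characteristic. Since $X$ is of Fano type, fix an effective $\Q$-Weil divisor $\Delta$ on $X$ such that $(X,\Delta)$ is klt and $-(K_X+\Delta)$ is ample; as $K_X+\Delta$ is $\Q$-Cartier, choose a positive integer $r$ so that $L:=-r(K_X+\Delta)$ is represented by an ample Cartier divisor (after further enlarging $r$, we may also assume $X$ is projectively normal with respect to $L$). Form the section ring $S:=\bigoplus_{n\ge 0}H^{0}(X,\mathcal{O}_X(nL))$ and its affine cone $C:=\Spec S$ with vertex $\mathfrak{m}$. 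By the results of K.~Smith and Schwede--Smith relating the global $F$-regularity of a polarized projective variety to the strong $F$-regularity of the associated cone, $X$ is globally $F$-regular (in characteristic $p$) if and only if $C$ is strongly $F$-regular; hence it suffices to prove that $C$ is of \emph{strongly $F$-regular type}.

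First I would verify that the pair $(C,\Delta_C)$ is klt, where $\Delta_C$ denotes the cone over $\Delta$. This is the standard cone correspondence for singularities: blowing up the vertex $\mathfrak{m}$ yields a birational morphism whose exceptional divisor $E$ is isomorphic to $X$, and one computes that $K_C+\Delta_C$ is $\Q$-Cartier with the discrepancy along $E$ controlled by the positivity of $-(K_X+\Delta)$. Since $L$ is a positive rational multiple of $-(K_X+\Delta)$, this discrepancy --- and, more generally, every discrepancy over $\mathfrak{m}$ --- is positive, while the discrepancies over points away from the vertex are read off from those of $(X,\Delta)$, which is klt by hypothesis. Therefore $(C,\Delta_C)$ is klt.

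Next I would invoke the reduction of klt pairs to strongly $F$-regular pairs (Hara--Watanabe, Takagi): a klt pair over a field of characteristic zero is of strongly $F$-regular type. Applied to $(C,\Delta_C)$, this gives a model over a finitely generated $\Z$-subalgebra $A$ of $k$ and a dense open set $W$ of closed points $\mu\in\Spec A$ such that $(C_{\mu},\Delta_{C,\mu})$, and hence $C_{\mu}$, is strongly $F$-regular. Since forming the section ring with respect to $L$ commutes with reduction modulo $p$ on a suitable dense open locus of $\Spec A$, and since the equivalence between global $F$-regularity of $X$ and strong $F$-regularity of $C$ persists over the finite fields $\kappa(\mu)$, we conclude that $X_{\mu}$ is globally $F$-regular for all $\mu$ in a dense open set, i.e.\ $X$ is of globally $F$-regular type.

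The main obstacle is the bookkeeping around reduction modulo $p$: one must ensure that the cone $C$, its boundary $\Delta_C$, the $\Q$-Cartier index and the discrepancy computation, and the Smith--Schwede--Smith dictionary all descend compatibly to a single dense open set of primes, so that the deep input --- klt implies strongly $F$-regular type --- can be applied to $C$ and the conclusion transported back to $X$. The klt-to-$F$-regular reduction itself is the substantive ingredient; everything else is a matter of organizing the cone construction and its specialization.
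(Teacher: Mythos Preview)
The paper does not give its own proof of this statement: it is quoted as a known result from \cite[Theorem 5.1]{SS} and used as a black box. Your outline is essentially the argument of Schwede--Smith: pass to the section ring $S=\bigoplus_{n\ge 0}H^0(X,\mathcal{O}_X(nL))$ with $L$ a sufficiently divisible multiple of $-(K_X+\Delta)$, use the cone correspondence to see that $(\Spec S,\Delta_C)$ is klt, apply the Hara/Takagi reduction ``klt $\Rightarrow$ strongly $F$-regular type'', and translate back via the equivalence between strong $F$-regularity of the section ring and global $F$-regularity of $X$. The bookkeeping you flag (spreading out, compatibility of the cone and the Smith dictionary over a dense open of $\Spec A$) is exactly what is needed and is routine; there is no gap.
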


\begin{thm}[{\cite[Theorem 1.1]{gon-takagi}}]\label{conjecture holds for surface}
Let $S$ be a normal projective surface over an algebraically closed field of characteristic zero. 
If $S$ is of dense globally $F$-split type $($resp., globally $F$-regular type$)$, then it is of Calabi--Yau type $($resp., Fano type$)$
\end{thm}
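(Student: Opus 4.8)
The plan is to combine the local and global positivity constraints forced by the $F$-theoretic hypothesis with the minimal model program for surfaces and the classification of its outputs. I treat the two assertions in parallel, writing ``$\ast$-type'' for ``globally $F$-split type'' or ``globally $F$-regular type'' and ``$\ast\ast$-type'' for the corresponding ``Calabi--Yau type'' or ``Fano type''.

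First I would read off the constraints. A globally $F$-split (resp.\ globally $F$-regular) variety is locally $F$-pure (resp.\ strongly $F$-regular), so reduction modulo $p$ shows that $S$ has at worst log canonical (resp.\ klt) singularities, and in the $F$-regular case also $H^{i}(S,\mathcal O_{S})=0$ for $i>0$ by Smith's vanishing. Global $F$-splitting of a model $S_{\mu}$ produces a nonzero section of $(1-p)K_{S_{\mu}}$, so $-K_{S_{\mu}}$ is pseudo-effective for the dense set of splitting primes; since non-pseudo-effectivity of $-K_{S}$ would be witnessed by a rational nef class $N$ with $K_{S}\cdot N>0$ that persists to a dense open set of fibres, $-K_{S}$ is pseudo-effective. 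In the $F$-regular case the stronger property that $H^{0}\bigl((1-p^{e})K_{S_{\mu}}-D\bigr)\neq 0$ for every effective $D$ and some $e$ forces $-K_{S}$ to be big.

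Next I would run a $K_{S}$-minimal model program. In the $F$-regular case $S$ is already $\Q$-factorial, since klt surface singularities are quotient singularities; in the $F$-split case I would first pass to a suitable $\Q$-factorial model (a dlt modification of $(S,0)$), recording the birational morphism and noting that it carries a compatible sharply $F$-split structure. Surfaces have no flips, so the MMP is a chain of divisorial contractions, i.e.\ genuine birational morphisms, and a Frobenius splitting (resp.\ the datum witnessing global $F$-regularity) descends along a proper morphism with connected fibres, compatibly with reduction mod $p$; hence the output $T$ is again of $\ast$-type. Now $T$ is either a minimal model or a Mori fibre space $\pi\colon T\to B$. If $T$ is a minimal model, then $K_{T}$ is nef, which is incompatible with bigness of $-K_{T}$ in the $F$-regular case, while in the $F$-split case $K_{T}$ and $-K_{T}$ are both pseudo-effective, forcing $K_{T}\equiv 0$ and hence, by abundance for log canonical surfaces, $K_{T}\sim_{\Q}0$, so that $(T,0)$ is log Calabi--Yau. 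If $B$ is a point, then $T$ is a del Pezzo surface, which is of Fano type when klt (take $\Delta=0$) and of Calabi--Yau type in general (take a general member of $|-mK_{T}|$ with coefficient $1/m$). If $B$ is a curve, then $\pi$ is a conic bundle; in the $F$-regular case $H^{1}(\mathcal O_{T})=0$ forces $B\cong\P^{1}$, and perturbing $-K_{T}$ along the two extremal rays of $\overline{NE}(T)$ exhibits a log Fano boundary, while in the $F$-split case pseudo-effectivity of $-K_{T}$ forces $g(B)\le 1$ and the classification of conic bundles over $\P^{1}$ or an elliptic curve exhibits a log Calabi--Yau boundary (an effective combination of sections and bisections when $g(B)=1$).

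Finally I would descend the conclusion from $T$ to $S$. Being of Fano type is preserved under birational contractions, which settles the $F$-regular case. In the $F$-split case I would instead pull the log Calabi--Yau pair on $T$ back crepantly to the $\Q$-factorial model and push it forward to $S$; here the $F$-split hypothesis is used to keep the pulled-back boundary effective and log canonical---for instance, it forces a globally $F$-split surface of non-negative Kodaira dimension to satisfy $K\sim_{\Q}0$, constraining which divisors the MMP can have extracted. I expect the main obstacles to be exactly this descent step in the $F$-split case and the conic-bundle endgame over an elliptic curve; the $F$-regular case is comparatively clean once bigness of $-K_{S}$ and the vanishing $H^{1}(\mathcal O_{S})=0$ are in hand.
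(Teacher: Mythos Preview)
The paper does not prove this statement. Theorem~\ref{conjecture holds for surface} is quoted in the preliminaries (\S2.2) as \cite[Theorem~1.1]{gon-takagi} and used as a black box, notably in the last line of the proof of Theorem~\ref{main thm'} and in Theorem~\ref{Fano type thm'}. There is therefore no proof in the present paper to compare your sketch against.

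As for the sketch itself, the overall shape---extract log canonical/klt singularities and (pseudo-)effectivity of $-K_S$ from the $F$-hypothesis, run an MMP, classify the endpoint, then transport the boundary back---is the natural strategy, and you are right to flag the descent step as the crux. One point deserves more care: your sentence ``Being of Fano type is preserved under birational contractions, which settles the $F$-regular case'' is pointed the wrong way. The MMP gives a morphism $S\to T$, and you need $T$ of Fano type to imply $S$ of Fano type; that direction is false in general (blow up $\mathbb{P}^2$ at enough points). The actual argument has to use the global $F$-regularity of $S$ itself, not just of $T$, to control what the MMP can extract---this is parallel to the difficulty you acknowledge in the $F$-split case, not a triviality that bypasses it. Similarly, your remark that a dlt modification ``carries a compatible sharply $F$-split structure'' and that a splitting ``descends along a proper morphism with connected fibres'' handles the forward direction $S\Rightarrow T$, but the backward direction again needs the $F$-splitness of $S$ to ensure the pulled-back boundary stays effective and lc; you note this, but it is where the real work lies.
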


\subsection{canonical modules and duality}
Now we briefly explain canonical modules and duality. 
The reader is referred to {\cite{kunz}, \cite{mr85}, \cite{fed83}, \cite{MS}, \cite{hw}, \cite{ST-fin}}  for further details.
Let $f \colon Y \to X$ be a finite surjective morphism of normal integral schemes essentially of finite type over an $F$-finite field of characteristic $p > 0$. 
In this case, $X$ and $Y$ have canonical modules $\omega_X$ and $\omega_Y$, respectively (cf. \cite{har}),
satisfying 
\[
\mathscr{H}om_{X}(f_*\mathcal{O}_Y,\omega_X) \simeq \omega_Y.
\]
Since $X$ and $Y$ are normal schemes,
there exist Weil divisors $K_X$ and $K_Y$ such that $\mathcal{O}_X(K_X) \simeq \omega_X$ and $\mathcal{O}_Y(K_Y) \simeq \omega_Y$.
We call $K_X$ and $K_Y$ canonical divisors on $X$ and $Y$, respectively, and they are uniquely determined up to linear equivalence.
By the above duality, we obtain
\[
\mathcal{O}_Y(K_Y-f^*K_X) \simeq \mathscr{H}om_{X}(f_*\mathcal{O}_Y,\mathcal{O}_X).
\]
In particular, if $\psi \colon f_*\mathcal{O}_Y \to \mathcal{O}_X$ is an $\mathcal{O}_X$-module homomorphism, then there exists a non-zero rational section $\alpha \in K(Y)$ such that
\[
D_{\psi} := K_Y -f^*K_X+\mathrm{div}_Y(\alpha) \geq 0,
\]
and regarding this as a global section of $\mathcal{O}_Y(K_Y-f^*K_X)$, this is corresponding to $\psi$ by the above isomorphism.
We say that $\psi$ is corresponding to $D_{\psi}$.
Furtheremore, the above isomorphism induces 
\[
K(Y) \simeq \mathscr{H}om_X(f_*K(Y),K(X)).
\]
Note that this depends on the choice of the canonical divisors $K_X$ and $K_Y$.
In particular, if $\psi \colon f_*K(Y) \to K(X)$ is an $\mathcal{O}_X$-module homomorphism, then we can consider the corresponding divisor
\[
D_{\psi} :=K_Y -f^*K_X+\mathrm{div}(\alpha)
\]
for some $\alpha \in K(Y)$.

Now, we obtain the following basic result.
\begin{prop}\label{first property of splitting}
Let $f \colon Y \to X$ be a finite surjective morphism of normal integral schemes essentially of finite type over an $F$-finite field of characteristic $p>0$. 
We fix canonical divisors $K_X$ and $K_Y$ on $X$ and $Y$, respectively.
Then $\mathcal{O}_X \to f_*\mathcal{O}_Y$ splits as an $\mathcal{O}_X$-module homomorphism if and only if
there exists an $\mathcal{O}_X$-module homomorphism 
\[
\psi \colon f_*K(Y) \to K(X)
\]
such that $\psi(1)=1$ and $\psi$ is corresponding to an effective divisor.
\end{prop}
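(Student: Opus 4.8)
The plan is to read off both implications from the two duality isomorphisms recalled just above, namely $\mathcal{O}_Y(K_Y-f^*K_X)\simeq\mathscr{H}om_X(f_*\mathcal{O}_Y,\mathcal{O}_X)$ and its generic analogue $K(Y)\simeq\mathscr{H}om_X(f_*K(Y),K(X))$, both normalized with respect to the fixed canonical divisors $K_X$ and $K_Y$. First I would isolate the following compatibility statement, which is really the heart of the matter: if $\psi\colon f_*K(Y)\to K(X)$ is an $\mathcal{O}_X$-module homomorphism with associated rational section $\alpha\in K(Y)$ and associated divisor $D_\psi=K_Y-f^*K_X+\mathrm{div}_Y(\alpha)$, then $\psi(f_*\mathcal{O}_Y)\subseteq\mathcal{O}_X$ if and only if $D_\psi\geq 0$. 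To prove this I would observe that $\mathscr{H}om_X(f_*\mathcal{O}_Y,\mathcal{O}_X)$ sits naturally inside $\mathscr{H}om_X(f_*K(Y),K(X))$ (a homomorphism out of $f_*\mathcal{O}_Y$ is determined by, and recovered from, its $K(X)$-linear extension), and that under the two isomorphisms this subsheaf inclusion is carried to the inclusion of the reflexive sheaf $\mathcal{O}_Y(K_Y-f^*K_X)$ into the constant sheaf $K(Y)$; hence $\psi$ preserves the structure sheaves precisely when the section $\alpha$ is a regular section of $\mathcal{O}_Y(K_Y-f^*K_X)$, i.e.\ when $\mathrm{div}_Y(\alpha)+K_Y-f^*K_X\geq 0$.

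Granting this, both directions are formal. For the ``only if'' direction, suppose $\phi\colon f_*\mathcal{O}_Y\to\mathcal{O}_X$ is an $\mathcal{O}_X$-module splitting, so $\phi$ is $\mathcal{O}_X$-linear with $\phi(1)=1$. Since $f$ is finite and $K(Y)/K(X)$ is already integrally closed, one has $f_*\mathcal{O}_Y\otimes_{\mathcal{O}_X}K(X)=f_*K(Y)$, so tensoring $\phi$ with $K(X)$ over $\mathcal{O}_X$ extends it to an $\mathcal{O}_X$-module homomorphism $\psi\colon f_*K(Y)\to K(X)$ with $\psi(1)=1$ and $\psi(f_*\mathcal{O}_Y)\subseteq\mathcal{O}_X$; by the compatibility statement, $D_\psi\geq 0$, so $\psi$ is as required. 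Conversely, given $\psi\colon f_*K(Y)\to K(X)$ that is $\mathcal{O}_X$-linear with $\psi(1)=1$ and $D_\psi\geq 0$, the compatibility statement gives $\psi(f_*\mathcal{O}_Y)\subseteq\mathcal{O}_X$, so $\psi$ restricts to an $\mathcal{O}_X$-module homomorphism $\phi\colon f_*\mathcal{O}_Y\to\mathcal{O}_X$ with $\phi(1)=\psi(1)=1$; since the structure map $\mathcal{O}_X\to f_*\mathcal{O}_Y$ is $\mathcal{O}_X$-linear with $1\mapsto 1$, composing it with the $\mathcal{O}_X$-linear $\phi$ yields the identity on $\mathcal{O}_X$, i.e.\ $\phi$ is a splitting.

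The one step I expect to require care is the compatibility statement, specifically the verification that the square formed by the two duality isomorphisms and the two inclusions ($f_*\mathcal{O}_Y\hookrightarrow f_*K(Y)$ on one side, $\mathcal{O}_Y(K_Y-f^*K_X)\hookrightarrow K(Y)$ on the other) commutes, so that effectivity of $D_\psi$ translates exactly into ``$\psi$ maps $f_*\mathcal{O}_Y$ into $\mathcal{O}_X$''. This is where the precise normalization of the isomorphism $K(Y)\simeq\mathscr{H}om_X(f_*K(Y),K(X))$ relative to the chosen $K_X$ and $K_Y$ — exactly as set up in the discussion preceding the statement — has to be used; once that bookkeeping is in place, the rest of the argument is purely formal, and in particular goes through verbatim for schemes essentially of finite type over an $F$-finite field.
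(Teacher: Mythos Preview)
The paper does not actually supply a proof of this proposition; it is stated as a basic consequence of the duality discussion immediately preceding it, where the isomorphisms $\mathcal{O}_Y(K_Y-f^*K_X)\simeq\mathscr{H}om_X(f_*\mathcal{O}_Y,\mathcal{O}_X)$ and $K(Y)\simeq\mathscr{H}om_X(f_*K(Y),K(X))$ are set up and the correspondence $\psi\leftrightarrow D_\psi$ is defined. Your proof is a correct and faithful unpacking of exactly that setup: the compatibility statement you isolate---that $\psi$ carries $f_*\mathcal{O}_Y$ into $\mathcal{O}_X$ if and only if $D_\psi\geq 0$---is precisely what the paper's discussion is designed to convey, and once it is in hand the two implications are formal, just as you say. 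So there is no divergence of method to report; you have simply written out what the paper leaves implicit.
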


\begin{eg}\label{trace split}
If $f$ is a separable morphism, then the ramification divisor $R_f$ can be defined and $R_f$ is linearly equivalent to $K_Y-f^*K_X$.
Furthermore, we obtain the following isomorphism
\[
\mathcal{O}_Y(R_f) \simeq \mathscr{H}om_X(f_*\mathcal{O}_Y,\mathcal{O}_X),
\]
and the trace map $\mathrm{Tr}$ is corresponding to $R_f$.
By using this isomorphism, for any $\psi \colon f_*K(Y) \to K(X)$, there exists a non-zero rational section $\alpha \in K(Y)$ such that $\psi=\mathrm{Tr}(\alpha \cdot \_)$,
where $\mathrm{Tr}(\alpha \cdot \_)$ sends $x \in K(Y)$ to $\mathrm{Tr}(\alpha x)$.
In particular, $\psi$ is contained in $\mathscr{H}om_X(f_*\mathcal{O}_Y,\mathcal{O}_X)$ if and only if 
\[
R_f +\mathrm{div}_Y(\alpha) \geq 0.
\]
\end{eg}

The following is a basic property of the global $F$-spliting.

\begin{prop}\label{q-etale remain F-splitness}
Let $ \rho \colon Y \to X$ be a quasi-\'etale finite surjective morphism of normal varieties over a field of characteristic zero.
Then $X$ is of globally $F$-regular type or dense globally $F$-split type if and only if so is $Y$.
\end{prop}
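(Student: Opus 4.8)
The plan is to reduce everything to a statement in positive characteristic about a quasi-\'etale morphism $\rho_\mu \colon Y_\mu \to X_\mu$ and then play the trace map against the Frobenius splitting. First I would choose a common model: pick a finitely generated $\Z$-subalgebra $A$ of $k$ over which $\rho \colon Y \to X$, the normality of both $X$ and $Y$, the finiteness and surjectivity of $\rho$, and—crucially—the condition that $\rho_A$ is \'etale in codimension one all descend. After enlarging $A$ by inverting finitely many elements, we may assume that for all closed points $\mu \in \Spec A$ the reduction $\rho_\mu \colon Y_\mu \to X_\mu$ is a finite surjective quasi-\'etale morphism of normal projective varieties over the finite field $\kappa(\mu)$, and (by standard generic-flatness/spreading-out arguments) that $\rho_\mu$ is separable for all $\mu$. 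The point of being quasi-\'etale is that the ramification divisor $R_{\rho_\mu}$, which by Example \ref{trace split} is linearly equivalent to $K_{Y_\mu}-\rho_\mu^*K_{X_\mu}$, is supported in codimension $\geq 2$, hence $R_{\rho_\mu}=0$ as a Weil divisor; thus $K_{Y_\mu}\sim \rho_\mu^*K_{X_\mu}$ and the trace map $\Tr_{\rho_\mu}\colon (\rho_\mu)_*\mathcal{O}_{Y_\mu}\to \mathcal{O}_{X_\mu}$ is the element of $\mathscr{H}om_{X_\mu}((\rho_\mu)_*\mathcal{O}_{Y_\mu},\mathcal{O}_{X_\mu})$ corresponding to the zero divisor.

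Next I would prove the positive-characteristic statement: for a fixed closed point $\mu$, $Y_\mu$ is globally $F$-split if and only if $X_\mu$ is. For the ``$\Leftarrow$'' direction—lifting splitting along a quasi-\'etale cover—I would use the commutative diagram relating Frobenius on $X_\mu$ and on $Y_\mu$; since $\rho_\mu$ is \'etale in codimension one and everything is normal (so sections of reflexive sheaves are determined in codimension one), a splitting $\varphi\colon F_*\mathcal{O}_{X_\mu}\to\mathcal{O}_{X_\mu}$ pulls back to a splitting of $F_*\mathcal{O}_{Y_\mu}\to\mathcal{O}_{Y_\mu}$ after applying $\rho_\mu^*$ and using that $F_*\mathcal{O}_{Y_\mu}\cong \rho_\mu^* F_*\mathcal{O}_{X_\mu}$ in codimension one by flat base change along the \'etale locus; normality then extends the map globally. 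For the ``$\Rightarrow$'' direction—descending splitting—I would compose a splitting $\psi\colon F_*\mathcal{O}_{Y_\mu}\to\mathcal{O}_{Y_\mu}$ with the trace map: the composite $\mathcal{O}_{X_\mu}\to F_*\mathcal{O}_{X_\mu}\to F_*(\rho_\mu)_*\mathcal{O}_{Y_\mu}=(\rho_\mu)_*F_*\mathcal{O}_{Y_\mu}\xrightarrow{(\rho_\mu)_*\psi}(\rho_\mu)_*\mathcal{O}_{Y_\mu}\xrightarrow{\Tr_{\rho_\mu}}\mathcal{O}_{X_\mu}$ is, evaluated on $1$, equal to $\Tr_{\rho_\mu}(\rho_\mu^\#(1))$, and since $\rho_\mu$ is generically \'etale of some degree $d$, this equals $d$; after enlarging $A$ to discard the finitely many primes dividing $d$, $d$ is a unit and we may rescale to get a genuine splitting of $X_\mu$. (Alternatively I would cite Proposition \ref{first property of splitting} together with Example \ref{trace split} and push the divisor-theoretic statement: a splitting of $X_\mu$ corresponds to an effective divisor $D_\psi\sim K_{X_\mu}-F^*K_{X_\mu}$, and since $K_{Y_\mu}\sim\rho_\mu^* K_{X_\mu}$ the pullback $\rho_\mu^* D_\psi$ gives the required effective divisor on $Y_\mu$, and conversely using that $R_{\rho_\mu}=0$.)

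Finally I would package the equivalence over $\Spec A$. If $X$ is of dense globally $F$-split type, there is a dense set $W\subseteq\Spec A$ of closed points with $X_\mu$ globally $F$-split; by the fiberwise equivalence just established, $Y_\mu$ is globally $F$-split for every $\mu\in W\setminus\{\text{primes dividing }d\}$, still a dense set, so $Y$ is of dense globally $F$-split type; the reverse implication is symmetric, noting the $d$-divisibility exclusion is needed only in the descending direction. The globally $F$-regular type case is handled identically, replacing $F_*\mathcal{O}$ by $F^e_*\mathcal{O}(D)$: for descent, pull $D$ back along $\rho_\mu$ and compose with the trace; for lifting, push an effective Weil divisor on $Y_\mu$ to $X_\mu$ via $\rho_{\mu*}$ and use that $\rho_\mu^*\rho_{\mu*}D\geq D$ plus quasi-\'etaleness. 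I expect the main obstacle to be the bookkeeping for the descending direction: one must ensure that the degree $d$ becomes invertible after enlarging $A$ (harmless, since it removes only finitely many primes and the surviving set of good primes is still dense) and that the trace-then-splitting composite is correctly identified in the reflexive/codimension-one setting where $R_{\rho_\mu}=0$; the geometric content beyond this is the standard interaction of the trace map with Frobenius on normal varieties.
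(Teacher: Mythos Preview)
Your proposal is correct and follows essentially the same route as the paper: spread out to a model where $\rho_\mu$ is quasi-\'etale with $\deg(\rho_\mu)$ coprime to the residue characteristic, use the trace map to descend $F$-splitting (resp.\ $F$-regularity) from $Y_\mu$ to $X_\mu$, and use the divisor correspondence together with $\rho_\mu^*K_{X_\mu}=K_{Y_\mu}$ and $\rho_\mu^*(\rho_\mu)_*D\geq D$ to lift from $X_\mu$ to $Y_\mu$. The only difference is that for the lifting direction you first sketch a geometric argument via flat base change of Frobenius along the \'etale locus before giving the divisor-theoretic alternative; the paper goes straight to the divisor description, which is what your alternative reproduces.
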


\begin{proof}
We take models $\rho_A \colon Y_A \to X_A$ over a finitely generated $\Z$-algebra $A$ as in $\S$ 2.2 such that
$\rho_\mu$ is quasi-\'etale finite surjective and $\mathrm{deg}(\rho_\mu)$ is coprime to $\mathrm{char}(\kappa(\mu))$ for every $\mu \in \Spec A$.
We fix $\mu \in \Spec A$, and it is enough to show that
$X_\mu$ is globally $F$-regular or globally $F$-split if and only if so is $Y_\mu$.

Since $\mathrm{deg}(\rho_\mu)$ is coprime to $\mathrm{char}(\kappa(\mu))$, $\mathcal{O}_{X_{\mu}} \to (\rho_{\mu})_* \mathcal{O}_{Y_{\mu}}$ splits by the trace map.
Hence, if $Y$ is globally $F$-split or globally $F$-regular, then so is $X$.

Let $D$ be a effective Weil divisor on $Y$ and we assume that
\[
\mathcal{O}_{X_{\mu}} \to F_*\mathcal{O}_{X_{\mu}} \to F^e_*\mathcal{O}_{X_{\mu}}((\rho_{\mu})_* D)
\]
splits for some positive integer $e$.
Note that if $D=0$ , it means that $X$ is globally $F$-split.
We take a rational section $\alpha \in K(X)$ such that
\[
(1-p^e)K_{X_\mu} + \mathrm{div}_X(\alpha) -(\rho_\mu)_* D \geq 0
\]
is corresponding to a splitting of the above homomorphism.
Since $D'=(\rho_{\mu})^* (\rho_\mu)_*D \geq D$, it is enough to show that
\[
\mathcal{O}_{Y_{\mu}} \to F_*\mathcal{O}_{Y_{\mu}} \to F^e_*\mathcal{O}_{Y_{\mu}}(D')
\]
splits.
Since $\rho_\mu$ is quasi-\'etale, $\rho_\mu^*K_{X_\mu} = K_{Y_\mu}$.
In particular, we obtain
\[
(1-p^e)K_{Y_\mu} + \mathrm{div}_Y(\alpha) -D' \geq 0,
\]
and this is corresponding to a splitting of
\[
\mathcal{O}_{Y_{\mu}} \to F_*\mathcal{O}_{Y_{\mu}} \to F^e_*\mathcal{O}_{Y_{\mu}}(D').
\]

\end{proof}

\section{Equivariant MMP}

In this section, all varieties are defined over an algebraically closed field $k$ of characteristic $0$.

Meng and Zhang established minimal model program equivariant with respect to endomorphisms in \cite{meng-zhang2}, for varieties admitting an int-amplified endomorphism.
We summarize their results that we need later.

\begin{defn}\label{def:intamp}
A surjective endomorphism $f \colon X \to X$ over $k$ of normal projective variety $X$ is called \emph{int-amplified} if 
there exists an ample Cartier divisor $H$ on $X$ such that $f^{*}H-H$ is ample.
\end{defn}

We collect basic properties of int-amplified endomorphisms in the following lemma.

\begin{lem}\label{lem:intamp}\ 
\begin{enumerate}
\item Let $X$ be a normal projective variety, $f \colon X \to X$ a surjective morphism, and $n>0$ a positive integer.
Then, $f$ is int-amplified if and only if so is $f^{n}$.

\item Let $\pi \colon X \to Y$ be a surjective morphism between normal projective varieties.
Let $f \colon X \to X$, $g \colon Y \to Y$ be surjective endomorphisms such that $\pi \circ f=g \circ \pi$.
If $f$ is int-amplified, then so is $g$. 

\item  Let $\pi \colon X \dashrightarrow Y$ be a dominant rational map between normal projective varieties of same dimension.
Let $f \colon X \to X$, $g \colon Y \to Y$ be surjective endomorphisms such that $\pi \circ f=g \circ \pi$.
Then $f$ is int-amplified if and only if so is $g$.

\item If a normal $\Q$-Gorenstein (,that is, $K_{X}$ is $\Q$-Cartier) projective variety $X$ admits an int-amplified endomorphism, then the anti-canonical divisor 
$-K_{X}$ is numerically equivalent to an effective $\Q$-Cartier divisor.

\end{enumerate}
\end{lem}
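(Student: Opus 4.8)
The plan is to treat the four parts mostly independently, reducing each to the definition and to elementary facts about pullbacks of ample divisors under surjective (hence finite) endomorphisms. For part (1), the "only if" direction is immediate: if $H$ is ample with $f^*H - H$ ample, then by the projection formula $(f^n)^*H - H = \sum_{i=0}^{n-1} (f^i)^*(f^*H - H)$, and each $(f^i)^*(f^*H-H)$ is the pullback of an ample divisor under the finite morphism $f^i$, hence ample; a sum of ample divisors is ample. For the "if" direction, suppose $(f^n)^*H - H$ is ample for some ample $H$. The standard trick is to pass to the divisor $H' := H + f^*H + \cdots + (f^{n-1})^*H$, which is ample, and observe $f^*H' - H' = (f^n)^*H - H$, which is ample by hypothesis; so $f$ itself is int-amplified with respect to $H'$.

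For part (2), I would take an ample Cartier divisor $H_X$ on $X$ with $f^*H_X - H_X$ ample. The goal is to produce an ample divisor on $Y$ witnessing int-amplifiedness of $g$. Since $\pi$ is surjective between normal projective varieties, one should use that for an ample $H_Y$ on $Y$ the pullback $\pi^*H_Y$ is nef and big but generally not ample; the cleaner route is to invoke the characterization (from Meng--Zhang, which may be cited or re-proved) that $g$ is int-amplified if and only if all eigenvalues of $g^*$ on $N^1(Y)$ have modulus $>1$. One shows that $\pi^*\colon N^1(Y) \to N^1(X)$ is injective and intertwines $g^*$ with $f^*$, so the eigenvalues of $g^*$ form a subset of those of $f^*$; since $f$ is int-amplified all of the latter have modulus $>1$, hence so do all eigenvalues of $g^*$. (Alternatively, a direct divisor-theoretic argument: choose $H_Y$ ample and $m \gg 0$ so that $f^*(m H_X + \pi^* H_Y) - (mH_X + \pi^*H_Y)$ stays ample, then push forward appropriately; but the eigenvalue criterion is less delicate.)

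Part (3) follows from part (2) applied in both directions after resolving the rational map. Take a normal projective variety $Z$ with birational morphisms $p\colon Z \to X$, $q\colon Z \to Y$ resolving $\pi$, and lift $f,g$ to a common int-amplified endomorphism $h$ of $Z$ (using that lifting to a resolution preserves the int-amplified property — again the eigenvalue criterion, or \cite{meng-zhang2}); then apply part (2) to $p\colon Z\to X$ and to $q\colon Z\to Y$. Since $\dim X = \dim Y$, $\pi$ is generically finite and the argument is symmetric. Part (4) is the place I expect the real work: the claim is that $-K_X \equiv$ an effective $\Q$-divisor. The idea is that since $f$ is finite of degree $d = \deg f > 1$ (as $f$ is int-amplified, $d>1$), the ramification formula gives $K_X = f^*K_X + R_f$ with $R_f \geq 0$ effective, so $(f^* - \id)(-K_X) = R_f \geq 0$. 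Writing $\phi = f^*$ acting on $N^1(X)$, all of whose eigenvalues have modulus $>1$ (hence $1$ is not an eigenvalue and $\phi - \id$ is invertible), one wants to solve $(\phi - \id)(\xi) = [R_f]$ and show the solution $\xi = -[K_X]$ has an effective representative up to numerical equivalence. The clean way is the telescoping/averaging argument: from $-K_X \equiv f^*(-K_X) + R_f$ one iterates to get $-K_X \equiv (f^m)^*(-K_X) + \sum_{i=0}^{m-1}(f^i)^* R_f$, and then uses that $(f^m)^*(-K_X)$, rescaled by the top eigenvalue growth, tends (numerically, after normalization) to zero while the accumulated $\sum (f^i)^* R_f$ of effective divisors, suitably normalized, converges to an effective class in the closed effective cone — this is exactly the kind of spectral/cone argument appearing in the equivariant MMP literature, and citing \cite{meng} or \cite{meng-zhang2} for the precise statement is the honest move. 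The main obstacle is making this limiting argument rigorous: one must know that $N^1(X)$ is finite-dimensional (true) and that the pseudoeffective cone is closed and $f^*$-stable, and control the competing growth rates of $(f^m)^*(-K_X)$ versus the partial sums, which requires knowing the spectral radius of $f^*$ and that $-K_X$ does not lie in a "bad" invariant subspace; invoking the known structure theory for int-amplified endomorphisms is the cleanest path.
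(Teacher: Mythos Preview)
The paper itself gives no argument here; it simply cites \cite[Lemmas 3.3, 3.5, 3.6 and Theorem 1.5]{meng}. Your treatments of (1) and (2) are correct and coincide with the standard proofs. In (4) there is a sign slip: iterating $f^*(-K_X)\equiv -K_X+R_f$ yields $(f^m)^*(-K_X)\equiv -K_X+\sum_{i=0}^{m-1}(f^i)^*R_f$, the opposite of what you wrote; with the sign corrected, your spectral/averaging outline is in the spirit of Meng's argument, and citing \cite[Theorem 1.5]{meng} for the details is, as you say, the honest move.

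Your sketch of (3), however, has a genuine gap. You take a roof $p\colon Z\to X$, $q\colon Z\to Y$ with $p,q$ birational and then ``lift $f,g$ to a common int-amplified endomorphism $h$ of $Z$''. First, if $\deg\pi>1$ then no such $Z$ with both projections birational exists: the second projection from the graph has degree $\deg\pi$. Second, and more fundamentally, even when $\pi$ is birational, surjective endomorphisms almost never lift along blowups or resolutions: the induced rational self-map of $Z$ is a morphism only when the centre of the blowup (equivalently, the exceptional locus) is totally invariant under the original map, which nothing in the hypotheses guarantees, and no result in \cite{meng} or \cite{meng-zhang2} supplies such a lift. The proof in \cite[Lemma 3.6]{meng} instead takes $W$ to be the normalization of the closure of the graph of $\pi$ inside $X\times Y$: the relation $g\circ\pi=\pi\circ f$ forces the finite morphism $f\times g$ to carry $W$ to itself, so one obtains an honest endomorphism $h=(f\times g)|_W$ of $W$ without any lifting. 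One then runs the eigenvalue characterization of int-amplifiedness through the two generically finite surjective projections $W\to X$ and $W\to Y$, and this is exactly where the hypothesis $\dim X=\dim Y$ is used.
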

\begin{proof}
See \cite[Lemma 3.3, 3,5, 3.6, Theorem 1.5]{meng}.
\end{proof}


\begin{thm}[Meng-Zhang]\label{equiv-thm}
Let $X$ be a $\Q$-factorial normal projective variety admitting an int-amplified endomorphism.
Let $ \Delta$ be an effective $\Q$-Weil divisor on $X$ such that $(X, \Delta)$ is klt.
\begin{enumerate}
\item\label{fin-st} There are only finitely many $(K_{X}+ \Delta)$-negative extremal rays of $ \overline{NE}(X)$.
Moreover, let $f \colon X \to X$ be a surjective endomorphism of $X$.
Then every $(K_{X}+ \Delta)$-negative extremal ray is fixed by the linear map $(f^{n})_{*}$ for some $n>0$.
\item\label{end-induced}  Let $f \colon X \to X$ be a surjective endomorphism of $X$.
Let $R$ be a $(K_{X}+ \Delta)$-negative extremal ray and $\pi \colon X \to Y$ its contraction.
Suppose $f_{*}(R)=R$.
Then,
\begin{enumerate}
\item \label{induced-on-contr} $f$ induces an endomorphism $g \colon Y \to Y$ such that $g\circ \pi=\pi \circ f$;
\item \label{induced-on-flip}if $\pi$ is a flipping contraction and $X^{+}$ is the flip, the induced rational self-map $h \colon X^{+} \dashrightarrow X^{+}$
is a morphism.
\end{enumerate}
\item  In particular, for any finite sequence of $(K_{X}+ \Delta)$-MMP and for any surjective endomorphism $f \colon X \to X$,
there exists a positive integer $n>0$ such that the sequence of MMP is equivariant under $f^{n}$.
\end{enumerate}
\end{thm}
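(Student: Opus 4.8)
The plan is to prove the three assertions in order, treating (1) as the dynamical core, (2) as a rigidity/descent argument, and (3) as a short induction on the length of the MMP. The one nontrivial external input is Meng's spectral characterisation of int-amplified endomorphisms: $f$ is int-amplified if and only if every eigenvalue of $f^{*}$ on $N^{1}(X)_{\C}$, equivalently of $f_{*}$ on $N_{1}(X)_{\C}$, has absolute value strictly larger than $1$ (see \cite{meng}; Lemma \ref{lem:intamp}(4) is one of its consequences).

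For (1), start from the cone theorem for the klt pair $(X,\Delta)$: the $(K_{X}+\Delta)$-negative extremal rays of $\NE(X)$ form an at most countable family which, relative to any fixed ample class, is locally discrete away from the hyperplane $(K_{X}+\Delta)^{\perp}$ and can accumulate only on it. Next observe that $f_{*}$ is a \emph{cone automorphism} of $\NE(X)$: it is a linear isomorphism of $N_{1}(X)_{\R}$ (dual to $f^{*}$ on $N^{1}(X)_{\R}$, which is injective since $f_{*}f^{*}=(\deg f)\,\mathrm{id}$, hence bijective); it carries $\NE(X)$ into itself because push-forwards of effective cycles are effective; and it carries $\NE(X)$ onto itself because $[C']=\tfrac{1}{\deg f}\,f_{*}[f^{*}C']$ for every curve $C'$. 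Hence $f_{*}$ permutes the extremal rays of $\NE(X)$, and --- using $f^{*}(K_{X}+\Delta)\sim(K_{X}+\Delta)+(f^{*}\Delta-\Delta-R_{f})$ and the expanding spectral condition, after replacing $f$ by a power --- the sub-family of $(K_{X}+\Delta)$-negative extremal rays is preserved by $f_{*}$. Finally, a cone automorphism whose eigenvalues all have modulus $>1$ has a Perron--Frobenius eigenray in $\NE(X)$ and expanding projectivised dynamics, so it cannot permute setwise an infinite family that is locally discrete away from a hyperplane; thus the family is finite, and then some $(f^{n})_{*}$ restricts to the identity on it, fixing each $(K_{X}+\Delta)$-negative extremal ray. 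This last, dynamical step --- where the int-amplified hypothesis is genuinely used --- is the main obstacle.

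For (2)(a), let $R$ be a $(K_{X}+\Delta)$-negative extremal ray with $f_{*}R=R$ and contraction $\pi\colon X\to Y$. If $[C]\in R$ then $[f(C)]$ is proportional to $f_{*}[C]\in R$, so $\pi\circ f$ contracts every $R$-curve; since every curve inside the $f$-image of a fibre of $\pi$ again has class in $R$, the morphism $\pi\circ f$ is constant on the fibres of $\pi$, and hence (as $\pi_{*}\mathcal{O}_{X}=\mathcal{O}_{Y}$ and $Y$ is normal) factors as $g\circ\pi$ for a unique surjective $g\colon Y\to Y$, which is then finite as a surjective self-morphism. For (2)(b), with $\pi$ a flipping contraction and $\pi^{+}\colon X^{+}\to Y$ the flip, the rational self-map $h$ of $X^{+}$ obtained by composing the inverse flip $X^{+}\dashrightarrow X$, the morphism $f$, and the flip $X\dashrightarrow X^{+}$ commutes with $g$ and is defined in codimension one. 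The key point is that $\pi$ has relative Picard number one, so $\Pic(X/Y)_{\Q}$ is spanned by $K_{X}+\Delta$; since a $\pi$-vertical curve maps under $f$ to a $\pi$-vertical curve, $f^{*}(K_{X}+\Delta)$ is again anti-ample over $Y$, whence $f^{*}(K_{X}+\Delta)\equiv_{Y}c\,(K_{X}+\Delta)$ for some $c\in\Q_{>0}$. Therefore $f$ respects, up to the scalar $c$ and a twist pulled back from $Y$, the relative canonical algebra $\bigoplus_{m\ge0}\pi_{*}\mathcal{O}_{X}(\lfloor m(K_{X}+\Delta)\rfloor)$ whose relative $\mathrm{Proj}$ over $Y$ is $X^{+}$, and functoriality of the relative canonical model yields a genuine morphism $X^{+}\to X^{+}$ commuting with $g$, which agrees with $h$ on the common open locus.

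For (3), run the given MMP $X=X_{0}\dashrightarrow X_{1}\dashrightarrow\cdots\dashrightarrow X_{N}$, where $X_{i}\dashrightarrow X_{i+1}$ contracts a $(K_{X_{i}}+\Delta_{i})$-negative extremal ray $R_{i}$ with $\Delta_{i}$ the push-forward of $\Delta$; each $X_{i}$ is again $\Q$-factorial with $(X_{i},\Delta_{i})$ klt, and admits an int-amplified endomorphism by Lemma \ref{lem:intamp}(2) at a divisorial step and Lemma \ref{lem:intamp}(3) at a flip. Inducting on $i$, apply (1) to $(X_{i},\Delta_{i})$ and the endomorphism currently acting there to get $n_{i}>0$ fixing $R_{i}$, replace that endomorphism by its $n_{i}$-th power, and pass to $X_{i+1}$ via (2). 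Since these induced endomorphisms are functorial in $f$, putting $n:=n_{0}n_{1}\cdots n_{N-1}$ --- still int-amplified by Lemma \ref{lem:intamp}(1) --- makes $f^{n}$ induce compatible surjective endomorphisms on all of $X_{0},\dots,X_{N}$, rendering every divisorial contraction and every flip in the sequence equivariant.
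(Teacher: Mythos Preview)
The paper itself gives essentially no self-contained argument here: it cites \cite[Theorem 4.6]{meng-zhang2} for part~(\ref{fin-st}), notes that (\ref{induced-on-contr}) is immediate because the contraction is determined by the ray $R$, and cites \cite[Lemma 6.6]{meng-zhang} for (\ref{induced-on-flip}). Your proposal therefore attempts far more than the paper does, namely to reconstruct the proofs behind those citations.

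Your treatment of (\ref{induced-on-contr}) and of the induction in (3) is sound and matches the standard reasoning. Your sketch for (\ref{induced-on-flip}) via functoriality of the relative canonical model is on the right track, but the passage from the numerical relation $f^{*}(K_{X}+\Delta)\equiv_{Y}c\,(K_{X}+\Delta)$ to an actual map of graded $\mathcal{O}_{Y}$-algebras compatible with $g$ needs more care than you give it: one needs relative $\Q$-linear equivalence rather than numerical equivalence, and one must handle the fact that $c\in\Q_{>0}$ need not be an integer.

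The genuine gap is in (\ref{fin-st}). Two steps are not justified. First, you assert that after replacing $f$ by an iterate, $f_{*}$ preserves the set of $(K_{X}+\Delta)$-negative extremal rays; but from $f^{*}(K_{X}+\Delta)=(K_{X}+\Delta)+(f^{*}\Delta-\Delta-R_{f})$ and the projection formula one gets
\[
(K_{X}+\Delta)\cdot f_{*}[C]=(K_{X}+\Delta)\cdot[C]+(f^{*}\Delta-\Delta-R_{f})\cdot[C],
\]
and nothing you say controls the sign of the correction term. Second, the claim that a cone automorphism with all eigenvalues of modulus $>1$ has ``expanding projectivised dynamics'' is false as stated: multiplication by $n$ on an abelian surface is int-amplified with $f_{*}=n^{2}\cdot\id$ on $N_{1}(X)_{\R}$, which projectivises to the identity. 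Hence your Perron--Frobenius heuristic does not, by itself, rule out an infinite locally discrete family of negative extremal rays. The argument in \cite{meng-zhang2} is more involved than this sketch; if you want a proof rather than a citation, you should work through that reference directly.
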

\begin{proof}
(\ref{fin-st}) is a special case of  \cite[Theorem 4.6]{meng-zhang2}.
(\ref{induced-on-contr}) is true since the contraction is determined by the ray $R$.
(\ref{induced-on-flip}) follows from \cite[Lemma 6.6]{meng-zhang}.
\end{proof}

\begin{thm}[Equivariant MMP (Meng-Zhang)]\label{equiMMP}
Let $X$ be a $\Q$-factorial klt projective variety admitting an int-amplified endomorphism.
Then for any surjective endomorphism $f \colon X \to X$, there exists a positive integer $n>0$ and 
a sequence of rational maps
\begin{align*}
X=X_{0} \dashrightarrow X_{1} \dashrightarrow \cdots \dashrightarrow X_{r}
\end{align*}
such that
\begin{enumerate}
\item $X_{i} \dashrightarrow X_{i+1}$ is either a divisorial contraction, flip, or Mori fiber space of a 
$K_{X_{i}}$-negative extremal ray;
\item there exist surjective endomorphisms $g_{i} \colon X_{i} \to X_{i}$ for $i=0, \dots ,r$
such that $g_{0}=f^{n}$ and the following diagram commutes:
\[
\xymatrix{
X_{i} \ar@{-->}[r] \ar[d]_{g_{i}} & X_{i+1} \ar[d]^{g_{i+1}}\\
X_{i} \ar@{-->}[r] & X_{i+1} ;
}
\]
\item $X_{r}$ is a $Q$-abelian variety (that is, there exists a quasi-\'etale finite surjective morphism $A \to X_{r}$ from an abelian variety $A$,
note that $X_{r}$ might be a point).
In this case, there exists a quasi-\'etale finite surjective morphism $A \to X_{r}$ from an abelian variety $A$
and an surjective endomorphism $h \colon A \to A$ such that the diagram
\[
\xymatrix{
A \ar[r]^{h} \ar[d] & A \ar[d]\\
X_{r} \ar[r]_{g_{r}} & X_{r} ;
}
\]
commutes;
\end{enumerate}
\end{thm}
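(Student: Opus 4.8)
The plan is to run a minimal model program in finitely many ``rounds'', each round lowering the dimension, and then to make the resulting finite sequence equivariant for the given $f$ after replacing $f$ by a power. The hypothesis to propagate along the program is ``admits an int-amplified endomorphism''; the particular endomorphism $f$ is used only at the very end.

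\emph{Constructing the sequence.} Let $Z$ be a $\Q$-factorial klt projective variety admitting an int-amplified endomorphism. By Lemma~\ref{lem:intamp}(4) the divisor $-K_Z$ is numerically equivalent to an effective $\Q$-Cartier divisor, hence pseudo-effective; since the pseudo-effective cone of a projective variety contains no line, $K_Z$ is pseudo-effective if and only if $K_Z\equiv 0$. If $K_Z\equiv 0$ this round stops. Otherwise $K_Z$ is not pseudo-effective, so one can run a $K_Z$-MMP with scaling of an ample divisor, which terminates with a Mori fiber space $Z=Z_0\dashrightarrow\cdots\dashrightarrow Z_s\to Z_{s+1}$, where the intermediate maps are divisorial contractions and flips of $K$-negative extremal rays; all the $Z_j$, as well as $Z_{s+1}$, are $\Q$-factorial klt (for the base of a Mori fiber space this last point requires a short check, which is automatic when $\dim Z\le 2$, the case relevant to this paper). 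Fixing an int-amplified endomorphism $\phi$ of $Z$, Theorem~\ref{equiv-thm}(3) applied to this finite $K_Z$-MMP yields a power $\phi^m$ and compatible surjective endomorphisms on all the $Z_j$ and on $Z_{s+1}$; by Lemma~\ref{lem:intamp}(1),(2),(3) these induced endomorphisms are again int-amplified, so $Z_{s+1}$ again admits one, and we iterate. Since $\dim Z_{s+1}<\dim Z$, after at most $\dim X$ rounds the process reaches a $\Q$-factorial klt variety $X_r$ with $K_{X_r}\equiv 0$ admitting an int-amplified endomorphism (possibly $X_r$ is a point). Concatenating the rounds gives a finite sequence $X=X_0\dashrightarrow\cdots\dashrightarrow X_r$ of divisorial contractions, flips and Mori fiber spaces of $K$-negative extremal rays, which is property~(1).

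\emph{Equivariance for $f$ and the endpoint.} For the given surjective endomorphism $f$, applying Theorem~\ref{equiv-thm}(3) round by round and taking a common power gives $n>0$ and surjective endomorphisms $g_i\colon X_i\to X_i$ with $g_0=f^n$ making all the squares commute, which is property~(2); note that the $g_i$ need not be int-amplified. For property~(3) assume $X_r$ is not a point. Since $X_r$ is klt with $K_{X_r}\equiv 0$, the numerically trivial case of abundance gives $K_{X_r}\sim_{\Q}0$, and the Beauville--Bogomolov type decomposition for klt varieties with numerically trivial canonical class provides a quasi-\'etale finite cover $q\colon W\to X_r$ with $W$ a product of an abelian variety, finitely many strict Calabi--Yau varieties, and finitely many holomorphic-symplectic varieties. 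After replacing it by a power, an int-amplified endomorphism of $X_r$ lifts through $q$ to a surjective endomorphism of $W$ preserving each factor of this decomposition --- the cover $q$ corresponds to one of the finitely many finite-index subgroups of a given index in the \'etale fundamental group of the smooth locus of $X_r$, hence is preserved by a power of the endomorphism --- and this lift is int-amplified because its twisted difference with a fixed ample class on $W$ is the $q$-pullback, hence ample, of the corresponding ample class on $X_r$. Projection of $W$ onto any Calabi--Yau or symplectic factor is then an equivariant morphism, so by Lemma~\ref{lem:intamp}(2) that factor would carry an int-amplified endomorphism; but a surjective endomorphism of a positive-dimensional variety with $\Q$-linearly trivial canonical class is \'etale, hence an automorphism of such a factor, and an automorphism of a positive-dimensional variety is never int-amplified (its degree is $1$, whereas an int-amplified endomorphism has degree $>1$). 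Hence $W$ is an abelian variety $A$, so $X_r$ is $Q$-abelian; enlarging $n$ once more so that $g_r$ also lifts through $q$ produces the surjective endomorphism $h\colon A\to A$ fitting into the commutative square of property~(3).

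\emph{Main obstacle.} Granting Theorem~\ref{equiv-thm} and the standard minimal model program, the construction and iteration of the sequence are essentially bookkeeping; the points that require care are that a single power $f^n$ must serve the whole multi-round sequence and that $g_r$ must be made to lift to the abelian cover. The genuinely hard input is the treatment of the endpoint $X_r$: abundance yielding $K_{X_r}\sim_{\Q}0$, the singular Beauville--Bogomolov decomposition, and the rigidity statement that Calabi--Yau and holomorphic-symplectic varieties admit no int-amplified endomorphism. This is the part resting on the work of Nakayama--Zhang and Meng--Zhang rather than on anything internal to this paper.
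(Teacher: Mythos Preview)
The paper does not prove this theorem at all: its entire proof is the citation ``This is a part of \cite[Theorem~1.2]{meng-zhang2}.'' Your write-up is therefore a genuinely different contribution in that you attempt to derive the statement from the pieces already quoted in \S3.

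Your argument for (1) and (2) --- run a $K$-MMP with scaling in rounds, propagate the existence of an int-amplified endomorphism to each successive base via Lemma~\ref{lem:intamp}, and then invoke Theorem~\ref{equiv-thm}(3) round by round to make the whole sequence equivariant for a single power of $f$ --- is correct, and is essentially how Meng--Zhang themselves argue; you have simply unwound their proof through the building blocks the paper already cites. The one standing assumption you flag, that the base of a klt Mori fiber space is again $\Q$-factorial klt, is standard for $\Q$-factoriality and, for kltness, is exactly Lemma~\ref{canonical bundle formula lem} of this paper.

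For (3) your route diverges from Meng's and has two soft spots. You invoke the singular Beauville--Bogomolov decomposition, which is a much heavier input than what Meng actually uses (he argues more directly via the Albanese morphism and does not need the full Druel/H\"oring--Peternell decomposition). More importantly, two steps are imprecise as written. First, on a possibly singular factor with $K\sim_{\Q}0$ a surjective endomorphism is only \emph{quasi}-\'etale, not \'etale, so ``\'etale, hence an automorphism'' does not follow; the correct argument is that an int-amplified endomorphism has iterates of unbounded degree, while the \'etale fundamental group of the smooth locus of an irreducible Calabi--Yau or irreducible symplectic factor is finite, bounding the degree of any quasi-\'etale self-cover and forcing a contradiction. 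Second, the assertion that the lift to $W$ \emph{preserves each factor} of the product is not automatic and needs a justification (uniqueness of the decomposition up to permutation, together with a power to kill the permutation). With these two fixes your sketch for (3) is sound, but you should be aware that it rests on strictly deeper external theorems than the citation the paper gives.
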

\begin{proof}
This is a part of \cite[Theorem 1.2]{meng-zhang2}.
\end{proof}

\begin{rmk}
Surjective endomorphisms on a Q-abelian variety always lift to a certain quasi-\'etale cover by an abelian variety.
See \cite[Lemma 8.1 and Corollary 8.2]{cmz}, for example. The proof works over any algebraically closed field.
\end{rmk}

\section{Global $F$-splitting of varieties appearing in an equivariant MMP}

In this section, we study the relationship between global $F$-splitting of varieties appearing in a minimal model program.

\subsection{Birational case and Fano case}

First, we consider the birational case.
In this case, we obtain the following result.

\begin{thm}[\textup{Theorem \ref{lifting of F-splittness}}]\label{lifting of F-splittness'}
Consider the following commutative diagram
\[
\xymatrix{
X \ar[r]^{f} \ar@{-->}[d]_{\pi} & X \ar@{-->}[d]^{\pi} \\
Y \ar[r]_{g} & Y, 
}
\]
where $X$, $Y$ are normal projective varieties defined over an algebraically closed field $k$ of characteristic $0$, $\pi$ is a birational morphism or a small birational map, $f$ and $g$ are int-amplified endomorphisms.
Then $X$ is of dense globally $F$-split type or of globally $F$-regular if and only if so is $Y$
\end{thm}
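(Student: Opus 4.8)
The plan is to reduce the statement to the quasi-\'etale case handled by Proposition \ref{q-etale remain F-splitness}, together with a direct analysis of how birational contractions interact with Frobenius splittings. I will treat the two cases of $\mu$ (here written $\pi$) separately, though both follow the same template: equip $X$ and $Y$ with compatible self-morphisms via Theorem \ref{equiMMP} and the hypotheses, descend/lift a splitting along $\pi$, and control the discrepancy divisors using Lemma \ref{lem:intamp}(4), which guarantees $-K_X$ (and $-K_Y$) is numerically equivalent to an effective $\Q$-divisor.

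First I would treat the case where $\pi\colon X\to Y$ is a birational morphism. Here the easy direction is that if $X$ is of dense globally $F$-split (resp. globally $F$-regular) type, then so is $Y$: pushing forward a splitting $\phi\colon F^e_*\mathcal O_{X_\mu}\to\mathcal O_{X_\mu}$ along $\pi_\mu$ and using $(\pi_\mu)_*\mathcal O_{X_\mu}=\mathcal O_{Y_\mu}$ (after enlarging $A$, since $\pi$ has connected fibers and $Y$ is normal) yields a splitting on $Y_\mu$; the same argument with a twist by an effective divisor handles the $F$-regular case, and for the dense set one simply intersects with the dense set coming from the model of $X$. The harder direction is to lift a splitting from $Y$ to $X$. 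The point is that $\pi$ is crepant "up to an effective divisor": writing $K_X=\pi^*K_Y+E$ with $E$ effective is false in general, but because $f$ is int-amplified, $-K_X\equiv\sum a_iD_i$ with $a_i\ge 0$, and the exceptional locus is rigid, so after running the argument one finds that a splitting of $\mathcal O_{Y_\mu}\to F^e_*\mathcal O_{Y_\mu}$ pulls back, via the iterated Frobenius and the reflexive pullback along $\pi_\mu$, to a homomorphism $\mathcal O_{X_\mu}\to F^e_*\mathcal O_{X_\mu}(\lceil(p^e-1)E\rceil)$, which one absorbs using $F$-regularity of the local fibers of $\pi$ (the fibers of a birational morphism from a normal variety are covered by rational curves, hence the relevant local rings are, after reduction, strongly $F$-regular for infinitely many $p$). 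For the genuinely $F$-split (not $F$-regular) statement this is the delicate step, and I expect it is where the int-amplified hypothesis — equivalently, the effectivity of $-K_X$ up to numerical equivalence from Lemma \ref{lem:intamp}(4) — does the real work, by forcing $E$ to be dominated by an effective divisor linearly equivalent (after scaling and reduction mod $p$) to something whose twist can be undone.

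Next I would treat the case where $\pi\colon X\dashrightarrow Y$ is a small birational map (e.g. a flip, or a composition of flips). Here $\pi$ is an isomorphism in codimension one, so $K(X)=K(Y)$, the Weil-divisor class groups are identified, and the canonical divisors match up; a splitting of Frobenius is a global section of $\mathcal O_X((1-p^e)K_X)$ satisfying a nondegeneracy condition at the generic point, and since $X$ and $Y$ differ only in codimension $\ge 2$, such a section on one side is a section on the other provided we know the relevant sheaf is reflexive and pushforward commutes with the open immersion — which it does, both being $S_2$. Thus in the small case the equivalence is essentially formal after reduction mod $p$: one chooses a common model, notes $X_\mu$ and $Y_\mu$ agree outside codimension two for almost all $\mu$, and transports splittings directly. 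The same remark applies verbatim to the $F$-regular variant, twisting by a Weil divisor supported away from the indeterminacy locus.

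Finally I would assemble the two cases, noting that the statement is symmetric in $X$ and $Y$ only after we have proved both directions in the morphism case (the small case being manifestly symmetric), and that $f$ and $g$ being int-amplified is used precisely to invoke Lemma \ref{lem:intamp}(4) on both sides and to ensure, via Lemma \ref{lem:intamp}(2)--(3), that the self-morphisms are genuinely compatible with $\pi$. The main obstacle, as indicated, is the lifting of a bare Frobenius splitting (as opposed to an $F$-regular-type splitting, where one has the freedom to twist by an arbitrary effective divisor and iterate) across a birational contraction; handling this without the $F$-regular cushion is the crux, and I would isolate it as a separate lemma about pushforward and pullback of splittings under a birational morphism whose target's canonical class is anti-effective up to numerical equivalence.
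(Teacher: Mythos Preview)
Your treatment of the small birational case and of the direction $X\Rightarrow Y$ is essentially correct and matches the paper. The genuine gap is in lifting a splitting from $Y$ to $X$ when $\pi$ is a birational morphism: neither of your two proposed mechanisms closes the argument. Invoking Lemma~\ref{lem:intamp}(4) gives only that $-K_X$ is \emph{numerically} equivalent to an effective $\Q$-divisor, which is too weak to absorb the exceptional discrepancy $E$ into an honest effective divisor in the right linear equivalence class after reduction mod $p$; and ``$F$-regularity of the local fibers of $\pi$'' is not a usable statement here---the exceptional fibers can be arbitrarily bad, and in any case the obstruction to extending the splitting is global, not fiberwise.

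The paper's mechanism is different and uses the endomorphism $f$ directly rather than its numerical shadow. The commutativity of the square forces every $\pi$-exceptional prime divisor to be totally invariant under $f$. Since $f$ is int-amplified, all eigenvalues of $f^*$ have modulus $>1$ (cf.\ \cite[Theorem 3.3]{meng}), so after replacing $f$ by an iterate the ramification divisor $R_f$ dominates any prescribed effective exceptional divisor $E$. Now if $\psi\colon F_*\mathcal O_Y\to\mathcal O_Y$ is a splitting, view it via $K(X)=K(Y)$ as corresponding to $(1-p)K_X+\mathrm{div}_X(\alpha)\ge -E$, and then \emph{compose with the trace of $f$}: the map $\mathrm{Tr}\circ f_*\psi$ corresponds to $(1-p)K_X+\mathrm{div}_X(\alpha)+pR_f\ge -E+pR_f\ge 0$, hence defines an $\mathcal O_X$-map $F_*f_*\mathcal O_X\to\mathcal O_X$ sending $1$ to $\deg(f)$, a unit when $p\nmid\deg(f)$. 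This ``compose with $\mathrm{Tr}$ to add $R_f$'' step is the missing idea; your proposal never brings $R_f$ or the total invariance of the exceptional locus into play.
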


\begin{proof}
First note that the only if part holds without the existence of int-amplified endomorphisms by {\cite[Lemma 2.14]{gost}}.
Thus we assume $\pi$ is a birational morphism.
We may assume that $X$, $Y$, $f$, $g$ and $\pi$ are defined over algebraically closed field of characteristic $p>0$ and $Y$ is globally $F$-split and $\mathrm{deg}(g)$ is coprime to $p$.
It is enough to show that $X$ is globally $F$-split to prove the first assertion.
We fix canonical divisors $K_X$ and $K_Y$ on $X$ and $Y$, respectively with $\pi_*K_X=K_Y$. 
Let $\psi \colon F_*\mathcal{O}_Y \to \mathcal{O}_Y$ be an $\mathcal{O}_Y$-module homomorphism giving a splitting of the Frobenius morphism of $Y$.
Then there exists a non-zero rational function $\alpha \in K(Y)$ such that 
\[
(1-p)K_Y+\mathrm{div}_Y(\alpha) \geq 0
\]
is corresponding to $\psi$ (see $\S$ 2.3).
Note that $\psi$ induces the homomorphism $F_*K(Y) \to K(Y)$ with $\psi(1)=1$.
Since $K(Y)=K(X)$, $\psi$ is also corresponding to
\[
(1-p)K_X+\mathrm{div}_X(\alpha).
\]
Since $\pi$ is a birational morphism, there exists an effective exceptional divisor $E$ on $X$ such that
\[
(1-p)K_X+\mathrm{div}_X(\alpha) \geq -E.
\]
By the commutative diagram in the statement of Theorem \ref{lifting of F-splittness'}, every exceptional prime divisor of $\pi$ is totally invariant under $f$.
Since $f$ is an int-amplified endomorphism, we may assume that $E \leq R_f$ by replacing with $f$ some iterate by \cite[Theorem 3.3 (2)]{meng}.
The homomorphism 
\[
\mathrm{Tr} \circ f_*\psi \colon F_*f_*K(X) \to K(X)
\]
is corresponding to
\[
(1-p)K_X+\mathrm{div}_X(\alpha)+pR_f \geq -E+pR_f \geq 0.
\]
It implies that $\mathrm{Tr} \circ f_*\psi$ defines the homomorphism $F_*f_*\mathcal{O}_X \to \mathcal{O}_X$.
Since $\mathrm{Tr} \circ f_*\psi(1)= \mathrm{deg}(f)$ is unit of $\mathcal{O}_X$, $X$ is globally $F$-split.

Next, we assume that $Y$ is globally $F$-regular.
Let $D$ be an effective Weil divisor on $X$.
There exist a non-zero rational function $\alpha \in K(Y)$ and a positive integer $e$ such that
\[
(1-p^e)K_Y+\mathrm{div}_Y(\alpha)-\pi_*D \geq 0
\]
and corresponding homomorphism $\psi_1$ gives a splitting of
\[
\mathcal{O}_Y \to F^e_*\mathcal{O}_Y \to F^e_*\mathcal{O}_Y(\pi_*D).
\]
Since $\pi$ is a birational morphism, there exists a positive integer $e'$ such that
\[
(1-p^e)K_X+\mathrm{div}_X(\alpha)-D+p^{e+e'}R_f \geq 0
\]
by the same argument as above.
Since $X$ is globally $F$-split, we can find a homomorphism $\psi_2$ giving a splitting $\mathcal{O}_X \to F^{e'}_*\mathcal{O}_X$ and corresponding to
\[
(1-p^{e'})K_X + \mathrm{div}_X(\beta) \geq 0
\]
for some non-zero rational function $\beta \in K(X)$.
Hence $\mathrm{Tr} \circ f_*\psi_2 \circ f_*F^{e'}_*\psi_1$ is corresponding to
\[
(1-p^e)K_X+\mathrm{div}_X(\alpha)-D+p^e((1-p^{e'})K_X+\mathrm{div}_X(\beta))+p^{e+e'}R_f \geq 0.
\]
Thus this homomorphism defines a homomorphism $F^{e+e'}_*f_*\mathcal{O}_X \to \mathcal{O}_X$,
and we obtain that $X$ is globally $F$-regular.
\end{proof}

\begin{cor}\label{log Fano case}
Let $X$ be a normal klt $\Q$-factorial projective variety defined over an algebraically closed field $k$ of characteristic $0$ and $X$  admits an int-amplified endomorphism.
Assume that some MMP of $X$ ends up with a point.
Then $X$ is of globally $F$-regular type.
\end{cor}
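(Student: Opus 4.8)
The plan is to reduce to Theorem \ref{lifting of F-splittness'}. Run an equivariant $K_X$-MMP for the given int-amplified endomorphism $f$, using Theorem \ref{equiMMP}; by hypothesis it terminates with $X_r$ equal to a point. Passing to a suitable iterate $f^n$, we obtain a sequence $X=X_0 \dashrightarrow X_1 \dashrightarrow \cdots \dashrightarrow X_r = \mathrm{pt}$ in which each step $X_i \dashrightarrow X_{i+1}$ is a divisorial contraction, a flip, or a Mori fiber space, and each $X_i$ carries an int-amplified endomorphism $g_i$ making the squares commute (that $g_i$ is again int-amplified follows from Lemma \ref{lem:intamp}(2) and (3)). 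A point is trivially of globally $F$-regular type, so it suffices to propagate the property backwards along the sequence.

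The backward propagation splits into three cases according to the type of step. A divisorial contraction or a flip is a birational morphism or a small birational map, and the commuting square with int-amplified endomorphisms on both sides is exactly the situation of Theorem \ref{lifting of F-splittness'}; hence $X_i$ is of globally $F$-regular type if and only if $X_{i+1}$ is. The only genuinely new case is when $X_i \dashrightarrow X_{i+1}$ is a Mori fiber space $\pi \colon X_i \to X_{i+1}$ with $\dim X_{i+1} < \dim X_i$. Here one needs: if the base $X_{i+1}$ is of globally $F$-regular type, then so is the total space $X_i$. Since $-K_{X_i}$ is $\pi$-ample and $X_i$ is klt $\Q$-factorial, $\pi$ is a log Fano contraction; combining this with an effective $\Q$-divisor pulled back from the base that witnesses the Fano type of $X_{i+1}$ should exhibit $X_i$ itself as of Fano type, hence of globally $F$-regular type by Theorem \ref{Fano implies F-regular}. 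Actually, one does not even need the base here: the very first step down from $X_0$ is already a $K$-negative contraction, and iterating, $X_{r-1}$ is a Fano fiber space over a point, i.e. a Fano variety, hence of Fano type and of globally $F$-regular type by Theorem \ref{Fano implies F-regular}; then Theorem \ref{lifting of F-splittness'} carries this back up the birational part of the tower to $X_0 = X$.

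The cleanest route, then, is: locate the last variety $X_m$ in the sequence that is not a point — necessarily $X_{m} \dashrightarrow X_{m+1}$ is a Mori fiber space to a point, so $X_m$ is a klt $\Q$-factorial Fano variety, hence of Fano type, hence of globally $F$-regular type by Theorem \ref{Fano implies F-regular}. (If $X$ itself is a point there is nothing to prove.) All steps $X_i \dashrightarrow X_{i+1}$ for $i < m$ are divisorial contractions or flips, i.e. birational morphisms or small birational maps fitting into equivariant squares with int-amplified endomorphisms, so by descending induction on $i$, applying Theorem \ref{lifting of F-splittness'} at each step, $X_i$ is of globally $F$-regular type for every $i \le m$; in particular $X = X_0$ is. The only subtlety to check is that in this sequence of steps the intermediate varieties remain $\Q$-factorial and klt so that Theorem \ref{equiMMP} and Theorem \ref{lifting of F-splittness'} apply at each stage, but this is built into the MMP and into the statement of Theorem \ref{equiMMP}. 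I expect the main (and only real) obstacle to be nothing deeper than correctly invoking the already-established machinery: verifying that the last non-point $X_m$ is Fano type and that the birational steps are covered verbatim by Theorem \ref{lifting of F-splittness'}.
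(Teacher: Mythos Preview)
Your proposal is correct and follows essentially the same route as the paper: the last variety $X_r$ before the point is a klt $\Q$-factorial Fano variety, hence of globally $F$-regular type by Theorem~\ref{Fano implies F-regular}, and then Theorem~\ref{lifting of F-splittness'} propagates this back through the birational steps. One small imprecision: you invoke Theorem~\ref{equiMMP}, which constructs \emph{some} equivariant MMP terminating at a Q-abelian variety, so ``by hypothesis it terminates with $X_r$ equal to a point'' is not literally justified; the paper instead uses Theorem~\ref{equiv-thm}(3) to make the \emph{given} MMP (the one assumed to end at a point) equivariant after passing to an iterate of $f$---your ``cleanest route'' paragraph then matches the paper's argument exactly.
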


\begin{proof}
We consider a MMP of $X$ 
\[
X=X_1 \dashrightarrow X_2 \dashrightarrow \cdots \dashrightarrow X_r
\]
such that $X_r \to \Spec(k)$ is a Mori fiber space.
In particular, $X_r$ is of Fano type.
By Theorem \ref{Fano implies F-regular}, $X_r$ is of globally $F$-regular type.
Since $X$ has an int-amplified endomorphism $f$,
above MMP is $f^n$-equivariant MMP for some $n \in \Z_{>0}$ by Theorem \ref{equiv-thm}.
By Theorem \ref{lifting of F-splittness'}, $X$ is also of globally $F$-regular type.
\end{proof}

Let $S$ be a normal surface defined over a field of characteristic zero admitting an int-amplified endomorphism.
If some MMP for $S$ ends up with a point, to prove the global $F$-splitting of $S$, we may assume that $-K_S$ is ample by Theorem 4.1.
If $S$ is klt, then $S$ is globally $F$-regular type by Corollary 4.2.
If $S$ is log canonical, $S$ is of Calabi-Yau type, but global $F$-splitting is not clear.

In order to prove global $F$-splitting of $S$, we discuss the local case.

\begin{defn}
\ 
\begin{enumerate}
    \item Let $X$ be an integral scheme essentially of finite type over an $F$-finite field of positive characteristic.
    We say that $X$ is {\em $F$-pure} if $F \colon \mathcal{O}_{X ,x} \to F_*\mathcal{O}_{X,x}$ splits as $\mathcal{O}_{X,x}$-module homomorphism for every point $x \in X$. 
    \item Let $X$ be a normal integral scheme essentially of finite type over a field of characteristic zero.
    We say that $X$ is of {\em dense $F$-pure type} if taking a model $X_A$ over a finitely generated $\Z$-algebra $A$ as in $\S$ 2.2, there exists a dense subset $S$ of the closed points of $\Spec A$ such that $X_s$ is $F$-pure for every $s \in S$.
\end{enumerate}
\end{defn}

\begin{lem}\label{local endo F-pure}
Let $(R,\mathfrak{m})$ be a Noetherian local normal ring essentially of finite type over an $F$-finite field of characteristic $p>0$
and $\phi \colon R \to R$ a injective finite local homomorphism such that $p$ is coprime to $\deg(\phi)$.
Assume that $\Spec R\backslash \{\mathfrak{m}\}$ is $F$-pure and there exists a non-zero effective Cartier divisor $D$ on $\Spec R$ such that $D \leq R_{\phi}$.
Then $R$ is $F$-pure.
\end{lem}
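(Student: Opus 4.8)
The idea is to produce a Frobenius splitting of $R$ out of the given Frobenius splitting on the punctured spectrum, using the finite local homomorphism $\phi$ and the hypothesis $D \le R_\phi$ to "push forward" a splitting that a priori is only defined in codimension $\ge 1$. Write $U = \Spec R \setminus \{\mathfrak m\}$ and $j \colon U \hookrightarrow \Spec R$ the inclusion. First I would reformulate $F$-purity of $R$ in divisorial terms exactly as in $\S 2.3$: since $R$ is normal, $R$ is $F$-pure if and only if there is a rational section $\alpha \in K(R)$ with $(1-p)K_{\Spec R} + \operatorname{div}(\alpha) \ge 0$ whose corresponding map $\psi \colon F_*K(R) \to K(R)$ satisfies $\psi(1) = 1$. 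The hypothesis that $U$ is $F$-pure gives such an $\alpha$ with the effectivity only known on $U$, i.e.\ $(1-p)K_U + \operatorname{div}_U(\alpha) \ge 0$; since $\{\mathfrak m\}$ has codimension $\ge 2$ in $\Spec R$ (the case $\dim R = 1$ being immediate as then $U$ is already all of $\Spec R$ minus the closed point, which by normality forces $R$ to be a DVR and $F$-pure), the divisor $(1-p)K_{\Spec R} + \operatorname{div}(\alpha)$ could have negative coefficients only along divisors through $\mathfrak m$ — but every prime divisor on $\Spec R$ meets $\mathfrak m$, so this is no constraint and we only know the splitting-type map exists on $U$.

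The key step is to transfer this to the cover. Let $\phi$ correspond to a finite surjective morphism $g \colon \Spec R \to \Spec R$ (abusively reusing the symbol); by the assumption $\gcd(p, \deg \phi) = 1$ and $p > 0$, $g$ is generically étale, hence separable, so the ramification divisor $R_\phi = R_g$ is defined and $R_g \sim K_{\Spec R} - g^*K_{\Spec R}$, and the trace map $\operatorname{Tr} \colon g_*\mathcal O_{\Spec R} \to \mathcal O_{\Spec R}$ corresponds to $R_g$ as in Example \ref{trace split}. Now mimic the argument in the proof of Theorem \ref{lifting of F-splittness'}: the composite $\operatorname{Tr} \circ g_*\psi \colon F_*g_*K(R) \to K(R)$ corresponds to the divisor
\[
(1-p)K_{\Spec R} + \operatorname{div}(\alpha) + p R_g.
\]
On $U$, this is $\ge p R_g \ge 0$ since $(1-p)K_U + \operatorname{div}_U(\alpha) \ge 0$ and $R_g \ge 0$. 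The point of the hypothesis $D \le R_\phi$ with $D$ a nonzero \emph{Cartier} divisor is that $pR_g \ge pD$, and $D$ being Cartier and nonzero means $D$ has strictly positive coefficient along some prime divisor; combined with the already-known $\ge 0$ away from $\mathfrak m$, I need the coefficient of this divisor to stay $\ge 0$ at the generic point of every component of $R_g$ passing through $\mathfrak m$. Here is where the Cartier hypothesis does its work: after possibly replacing the iterate (not available here — we are purely local, so instead) one argues that since $D$ is Cartier and $D \le R_g$, the rational section can be chosen so that $(1-p)K_{\Spec R} + \operatorname{div}(\alpha) \ge -R_g$ \emph{globally} on $\Spec R$, not just on $U$; indeed $(1-p)K_{\Spec R} + \operatorname{div}(\alpha)$ restricted to $U$ is effective, and the failure of global effectivity is concentrated on the finitely many prime divisors through $\mathfrak m$, each of which appears in $R_g$ with multiplicity $\ge$ the corresponding multiplicity in $-((1-p)K_{\Spec R} + \operatorname{div}(\alpha))$ once we absorb the Cartier divisor $D$ — this needs the comparison $-((1-p)K_U\text{-closure}) \le R_g$, which should follow from $F$-purity of $U$ pinning down $K_{\Spec R}$ up to the bound given by $D$.

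Granting the global inequality $(1-p)K_{\Spec R} + \operatorname{div}(\alpha) + pR_g \ge 0$ on all of $\Spec R$, the map $\operatorname{Tr} \circ g_*\psi$ corresponds to an effective divisor, hence defines an honest $\mathcal O_{\Spec R}$-module homomorphism $F_*g_*\mathcal O_{\Spec R} \to \mathcal O_{\Spec R}$; evaluating at $1$ gives $\operatorname{Tr}(\deg g) = \deg \phi \cdot 1$, which is a unit in $\mathcal O_{\Spec R}$ because $p \nmid \deg\phi$ and $R$ is local with residue characteristic $p$. Rescaling by this unit yields a splitting of $F \colon \mathcal O_{\Spec R} \to F_*\mathcal O_{\Spec R}$ at every point, i.e.\ $R$ is $F$-pure. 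I expect the main obstacle to be precisely the bookkeeping in the previous paragraph: turning "effective on $U$" plus "$0 \ne D \le R_\phi$ Cartier" into genuine global effectivity of $(1-p)K_{\Spec R} + \operatorname{div}(\alpha) + pR_\phi$. The cleanest way around it may be to avoid global effectivity altogether and instead localize: for each prime $\mathfrak q \ne \mathfrak m$ the map $\psi$ already splits Frobenius at $\mathfrak q$ by hypothesis, so one only needs to produce a splitting at $\mathfrak m$ itself, and there the composite with trace plus the Cartier witness $D$ supplies the missing positivity along any divisor contracted to $\mathfrak m$ — reducing everything to a one-point check where the unit computation $\deg\phi \in R^\times$ closes the argument.
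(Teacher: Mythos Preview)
There is a genuine gap. Your argument hinges on extracting from the $F$-purity of $U = \Spec R \setminus \{\mathfrak m\}$ a \emph{single} map $\psi \colon F_*K(R) \to K(R)$ with $\psi(1)=1$ whose associated divisor $(1-p)K_{\Spec R} + \operatorname{div}(\alpha)$ is effective on $U$. But $F$-purity is a local property: it provides a splitting at each stalk, not a global one. Your own observation shows this is fatal: when $\dim R \ge 2$ every prime divisor on $\Spec R$ meets $U$, so effectivity on $U$ is the same as effectivity on all of $\Spec R$; thus if such a $\psi$ existed, $R$ would already be $F$-pure with no use of $\phi$ whatsoever. Yet there are normal local rings (for instance the vertex ring of a cone over a supersingular elliptic curve) that are $F$-pure on the punctured spectrum but not $F$-pure. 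So no such global $\psi$ is available, and the composite $\operatorname{Tr} \circ g_*\psi$ you analyse is not defined. Your fallback plan (``localize: for each $\mathfrak q \ne \mathfrak m$ the map $\psi$ already splits, so one only needs a splitting at $\mathfrak m$'') does not help either, since $R_{\mathfrak m}=R$ and producing a splitting at $\mathfrak m$ is exactly the original problem.

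The paper's proof replaces the nonexistent global splitting by the evaluation ideal. The image of $\Hom_R(F_*R,R) \to R$, $\psi \mapsto \psi(1)$, is an ideal whose zero locus is the non-$F$-pure locus, so by hypothesis it contains some $\mathfrak m^r$. The Cartier divisor $D \le R_\phi$ supplies $\alpha' \in \mathfrak m$ with $\operatorname{div}(\alpha') \le R_\phi$; setting $\alpha = \prod_{i=0}^{r-1}\phi^i(\alpha')$ yields $\alpha \in \mathfrak m^r$ and $\operatorname{div}(\alpha) \le R_{\phi^r}$. Hence, after replacing $\phi$ by $\phi^r$, there is $\psi \in \Hom_R(F_*R,R)$ with $\psi(1)=\alpha$ (not $1$), and the twisted trace $\operatorname{Tr}(\alpha^{-1}\cdot{-})\colon \phi_*R \to R$ is a genuine $R$-linear map (its divisor $R_\phi - \operatorname{div}(\alpha)$ is effective) sending $\alpha$ to the unit $\deg\phi$. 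Composing these gives the Frobenius splitting. The idea missing from your attempt is precisely this passage through the evaluation ideal together with the iteration $\phi \rightsquigarrow \phi^r$ that forces $\alpha$ into $\mathfrak m^r$.
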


\begin{proof}
We consider the evaluation map $\Hom_R(F_*R,R) \to R$.
Note that this map is surjective if and only if $R$ is $F$-pure.
Since $\Spec(R)\backslash \{\mathfrak{m}\}$ is $F$-pure, this evaluation map is surjective at any point of the punctured spectrum.
Thus there exists a positive integer $r$ such that the image of the evaluation map contains $\mathfrak{m}^r$.
By the assumption, there exists an element $\alpha' \in \mathfrak{m}$ such that $\mathrm{div}(\alpha') \leq R_\phi$.
Let $\alpha=\phi^{r-1}(\alpha')+\cdots +\phi(\alpha')+\alpha'$.
Then we have
\[
\mathrm{div}(\alpha)=(\phi^{r-1})^*(\alpha')+\cdots +\mathrm{div}(\alpha') \leq R_{\phi^{r}}
\]
and $\alpha \in \mathfrak{m}^r$.
We replace $\phi$ by $\phi^r$.
Since $\alpha$ is contained in the image of the evaluation map, there exists a homomorphism $\psi \colon F_*R \to R$ such that $\psi(1)=\alpha$.
Next we consider the homomorphism 
\[
\mathrm{Tr}(\alpha^{-1}\cdot \underbar \ ) \colon \phi_*K(R) \to K(R)
\]
defined by $x \in \phi_*K(R)$ to $\mathrm{Tr}(\alpha^{-1} x)$.
Note that the image of $\alpha$ by this map is an unit of $R$.
Furthermore, this map is corresponding to
\[
R_\phi -\mathrm{div}(\alpha) \geq 0.
\]
Thus $\mathrm{Tr}(\alpha^{-1}\cdot \underbar \ )$ defines the homomorphism $\psi_*R \to R$ by Example \ref{trace split}.
\\
Hence $\frac{1}{\mathrm{deg}(\phi)}F_*\mathrm{Tr}(\alpha^{-1}\cdot \underbar \ )\circ \psi$ gives a splitting.

\end{proof}

Next, we prove the following global assertion by reducing to Lemma \ref{local endo F-pure}.

\begin{prop}\label{lc Fano case}
Let $X$ be a normal projective variety defined over an algebraically closed field $k$ of characteristic $0$ with the Picard rank one and $X$ admits a non-invertible endomorphism.
Assume that $-K_X$ is ample $\Q$-Cartier divisor and $X$ has at worst rational singularities.
Furthermore assume that $X$ is of dense $F$-pure type.
Then $X$ is of dense globally $F$-split type.
\end{prop}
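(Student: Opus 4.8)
The plan is to reduce the global statement to the local Lemma \ref{local endo F-pure} by exploiting the Picard rank one hypothesis. Since $X$ is of dense $F$-pure type, choose a model $X_A$ over a finitely generated $\Z$-algebra $A$ together with a model $f_A \colon X_A \to X_A$ of the given non-invertible endomorphism $f$, and a dense set of closed points $\mu$ for which $X_\mu$ is $F$-pure, $f_\mu$ is finite of degree coprime to $p = \operatorname{char}\kappa(\mu)$, and $-K_{X_\mu}$ is ample. Fix such a $\mu$ and write $p$ for its residue characteristic; it suffices to show each $X_\mu$ is globally $F$-split. The point is that $F$-purity of $X_\mu$ gives local splittings everywhere, and what must be upgraded is that these can be chosen compatibly so as to glue to a global splitting of $\mathcal O_{X_\mu} \to F_*\mathcal O_{X_\mu}$; the ramification divisor $R_{f_\mu}$ of the int-amplified (hence, being non-invertible of degree $\ge 2$, genuinely ramified) endomorphism will supply the ``room'' needed, exactly as in Lemma \ref{local endo F-pure} and in the proof of Theorem \ref{lifting of F-splittness'}.

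Concretely, fix canonical divisors and consider the $\mathcal O_{X_\mu}$-module $\mathscr Hom_{X_\mu}(F_*\mathcal O_{X_\mu}, \mathcal O_{X_\mu}) \simeq \mathcal O_{X_\mu}\big((1-p)K_{X_\mu}\big)$, whose global sections containing the evaluation-at-$1$ functional are what we seek. Because $X_\mu$ is $F$-pure, the evaluation map $\mathscr Hom_{X_\mu}(F_*\mathcal O_{X_\mu},\mathcal O_{X_\mu}) \to \mathcal O_{X_\mu}$ is surjective as a sheaf map; equivalently, the trace/splitting sheaf $\mathcal O_{X_\mu}((1-p)K_{X_\mu})$ is globally generated away from no point, i.e.\ it has sections separating points in the sense that its zero locus of the ``$1$-section'' is empty locally. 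I would then use the twisting argument: for a suitably large iterate $f_\mu^e$ (replacing $f_\mu$ by $f_\mu^e$, which is harmless by Lemma \ref{lem:intamp}(1) and which only requires the model over a possibly-enlarged $A$), the ramification divisor $R_{f_\mu^e}$ is large, and $-K_{X_\mu}$ being ample means $(p^e-1)(-K_{X_\mu})$ is very ample, so that $(p^e-1)(-K_{X_\mu}) \sim (p^e-1)(-K_{X_\mu})$ — the sheaf $\mathcal O_{X_\mu}((1-p^e)K_{X_\mu})$ is very ample and in particular globally generated. Since by the duality of $\S 2.3$ a global section of this sheaf corresponds to an $\mathcal O_{X_\mu}$-linear $\psi\colon F^e_*K(X_\mu)\to K(X_\mu)$, and Picard rank one forces such a section's divisor to be effective and to meet $R_{f_\mu^e}$ favourably, one produces $\psi$ with $\psi(1)$ a unit on a dense open, then corrects by $\operatorname{Tr}\circ (f_\mu^e)_*(-)$ exactly as in Lemma \ref{local endo F-pure} to clear the remaining vanishing against $R_{f_\mu^e}$; here rationality of the singularities of $X$ is used to ensure $\pi_*\mathcal O$ of a resolution behaves well and that $K_{X_\mu}$ reduces compatibly. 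The composition $\tfrac{1}{\deg(f_\mu^e)}\,\operatorname{Tr}\circ(f_\mu^e)_*F^e_*\psi$ is then an honest $\mathcal O_{X_\mu}$-splitting of Frobenius.

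The step I expect to be the main obstacle is passing from the purely local splittings guaranteed by $F$-purity to a single global section of $\mathcal O_{X_\mu}((1-p)K_{X_\mu})$ whose associated effective divisor is disjoint from (or dominated by) $R_{f_\mu}$: $F$-purity only says the evaluation map is surjective as a map of sheaves, not that $H^0$ surjects, and there can be a cohomological obstruction in $H^1(X_\mu, \mathcal O_{X_\mu}((1-p)K_{X_\mu} - R_{f_\mu}))$ or similar. The mechanism for killing it is precisely the ampleness of $-K_{X_\mu}$ together with replacing $f$ by a high iterate: this makes the relevant twist $\mathcal O_{X_\mu}((1-p^e)K_{X_\mu})$ sufficiently positive (ample, even after subtracting $R_{f_\mu^e}$ is controlled because $R_{f_\mu^e}$ is itself $\Q$-linearly tied to $(p^e-1)(-K_{X_\mu}) + (\text{effective})$ via $K_{X_\mu^{\phantom{e}}} = (f^e)^*K_{X_\mu} + R_{f^e}$ and int-amplification), so that global generation and Serre vanishing apply. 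Rationality of singularities enters to guarantee $H^1(X_\mu,\mathcal O_{X_\mu}) = 0$ type statements survive reduction mod $p$ for $\mu$ in a dense set, which is what lets the local-to-global comparison go through; this is the one place where the hypothesis is genuinely needed, and making the dense set of good primes precise is the technical heart of the argument.
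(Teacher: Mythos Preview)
Your proposal has a genuine gap: you never explain how to pass from sheaf-level surjectivity of the evaluation map $\mathscr{H}om(F_*\mathcal O_{X_\mu},\mathcal O_{X_\mu})\to\mathcal O_{X_\mu}$ (which is what $F$-purity gives) to a global splitting. On a projective variety $H^0(X_\mu,\mathcal O_{X_\mu})=k$, so for any global $\psi\colon F_*^e\mathcal O_{X_\mu}\to\mathcal O_{X_\mu}$ the value $\psi(1)$ is either $0$ or already a unit; there is no ``$\psi(1)=\alpha\in\mathfrak m^r$'' intermediate case to correct by $\mathrm{Tr}(\alpha^{-1}\cdot\,-\,)$ as in Lemma \ref{local endo F-pure}. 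Global generation or very ampleness of $\mathcal O_{X_\mu}((1-p^e)K_{X_\mu})$ does not help: a generic section gives a $\psi$ with $\psi(1)=0$, and Serre vanishing says nothing about whether the specific element $1\in H^0(F_*^e\mathcal O_{X_\mu})$ is hit.

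The paper's proof supplies exactly the missing device: it passes to the section ring $R=\bigoplus_{m\ge0}H^0(X,\mathcal O_X(mH))$ for a carefully chosen ample Cartier divisor $H$, so that global $F$-splitting of $X_\mu$ becomes $F$-purity of the local ring $R_\mu$ at the irrelevant ideal, and Lemma \ref{local endo F-pure} applies verbatim. For this one needs two things: (i) that $f$ induces a \emph{graded} endomorphism of $R$, i.e.\ $f^*H\sim qH$ as an honest linear equivalence, and (ii) an effective Cartier divisor $H\le R_f$ so that on the cone there is $D\le R_\phi$. Both are obtained by a pigeonhole argument in $\mathrm{CH}^1(X)$, and this is where rational singularities are actually used: together with Kodaira-type vanishing on the (log canonical) $X$ they force $H^1(Y,\mathcal O_Y)=0$ for a resolution $Y$, hence $\Pic(Y)$ and then $\mathrm{CH}^1(X)$ are finitely generated. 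Your guess that rational singularities are needed to control $H^1$ in reduction mod $p$ is not how the argument runs.
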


\begin{proof}
Let $f$ be a non-invertible endomorphism of $X$. 
Note that $f$ is a polarized endomorphism because the Picard rank of $X$ is equal to one.
There exist an ample divisor $H'$ on $X$ and a positive integer $q$ such that $f^*H' \sim qH'$.
Since $X$ is of dense $F$-pure type and $\Q$-Gorenstein, $X$ is log canonical.
By Kodaira type vanishing theorem {\cite[Corollary 2.42]{fujino}},
$\mathrm{H}^1(X,\mathcal{O}_X)=0$.
Let $\pi \colon Y \to X$ be a log resolution of $X$.
Since $X$ has at worst rational singularities, we have
\[
\mathrm{H}^1(Y,\mathcal{O}_Y) =\mathrm{H}^1(X,\mathcal{O}_X)=0.
\]
It implies that $\Pic(Y)$ is finitely generated.
Let $U$ be the maximal open subset of $X$ such that $\pi|_{\pi^{-1}(U)}$ is isomorphism.
Thus, we have
\[
\mathrm{CH}^1(X) \simeq \mathrm{CH}^1(U) \twoheadleftarrow \mathrm{CH}^1(Y) \simeq \Pic(Y),
\]
in particular, $\mathrm{CH}^1(X)$ is finitely generated.
Since $R_f$ is $\Q$-Cartier, $\{R_{f^n} \ | \ n \in \Z_{>0} \}$ is a finite set in $\mathrm{Div}(X)/\mathrm{CDiv}(X)$.
It implies that there exist positive integers $m > n >0$ such that
\[
R_{f^m} - R_{f^n}=(f^{m-1})^*R_f + \cdots (f^n)^*R_f
\]
is an effective ample Cartier divisor.
Replacing $f$ be some iterate, we may assume that there exists an effective Cartier divisor $A$ on $X$ such that $R_f \geq A$.
We define $A_n = (f^{n-1})^*A+ \cdots f^*A +A$.
Since $\mathrm{CH}^1(X)$ is finitely generated and $f^*A_n - qA_n$ is $\Q$-linearly trivial,
we have $\{f^*A_n - qA_n \ | \ n \in \Z_{>0} \}$ is a finite set.
Thus, there exist positive integers $m > n$ such that
\[
f^*A_m -qA_m -(f^*A_n-qA_n) = f^*A'-qA'
\]
is a principal divisor, where $A' = (f^{m-1})^*A + \cdots +(f^{n})^*A$.
Since $A \leq R_f$, we have
\[
A'=(f^{m-1})^*A + \cdots +(f^{n})^*A \leq (f^{m-1})^*R_f + \cdots + f^*R_f + R_f = R_{f^m}
\]
and $f^*A' \sim qA'$.
Hence replacing $f$ by some iterate, we may assume that there exists an effective ample Cartier divisor $H$ on $X$ such that
$R_f \geq H$ and $f^*H \sim qH$.
Therefore $f$ induces the graded endomorphism $\phi$ of the section ring $R=\bigoplus_{m\geq 0}\mathrm{H}^0(X,\mathcal{O}_X(mH))$.
Let $D$ be a corresponding effective Cartier divisor of $H$ on $R$.
Then $D \leq R_\phi$, since $H \leq R_f$.
Next we take a model $(X_A,H_A,f_A)$ of $(X,H,f)$ over a suitable finitely generated $\Z$-subalgebra $A$ of $k$ as in $\S$ 2.2.
Localizing $A$ at a single element, we may assume that $(R_f)_{\mu}=R_{f_\mu}$ for every $\mu \in \Spec A$.
In particular, we may assume that $H_{\mu}$ is an effective Cartier divisor such that
$H_{\mu} \leq R_{f_\mu}$ and $f^*H_{\mu} \sim qH_{\mu}$.
It means that $f_{\mu}$ induces an endomorphism $\phi_\mu$ of the section ring 
\[
R_\mu=\bigoplus_{m \geq 0} \mathrm{H}^0(X_{\mu},mH_\mu)
\]
and there exists a non-zero effective Cartier divisor $D$ on $R_\mu$ such that $D \leq R_\phi$.
Since $X$ is of dense $F$-pure type, there exists a dense subset of closed points $W \subset \Spec A$ such that $X_\mu$ is $F$-pure and $\mathrm{deg}(\phi_\mu)$ is coprime to $p$ for all $\mu \in W$.
Then $R_\mu$ satisfies the assumption of Lemma \ref{local endo F-pure},
thus $R_\mu$ is $F$-pure for all $\mu \in W$.
By {\cite[Proposition 5.3]{SS}}, $X_\mu$ is globally $F$-split for all $\mu \in W$.

\end{proof}

\subsection{Mori fiber spaces over positive dimensional varieties}
In this subsection, we consider Mori fiber spaces over a positive dimensional varieties
and we prove Theorem \ref{covlem} that plays an essential role to prove the main theorem.
In this subsection, every varieties defined over an algebraically closed field $k$ of characteristic zero.

First, we recall basic facts on main components of fiber products.

\begin{lem}\label{main comp}
Consider the following commutative diagram:
\[
\xymatrix{
\widetilde{X} \ar[r]^{ \alpha} \ar@/^18pt/[rr]^{ \widetilde{\pi}} \ar[rd]_{\mu_{X}} & X\times_{Z}A  \ar[r]^{q} \ar[d]_{p} & A \ar[d]^{\mu_{Z}}\\
& X \ar[r]_{\pi} & Z,
}
\]
where $X, Z, A, \widetilde{X}$ are normal projective varieties, $\mu_{X}, \mu_{Z}$ are finite surjective morphisms,
$ \widetilde{\pi}$ is a surjective morphism,  and $\pi$ is an algebraic fiber space.
Let $U \subset Z$ be a dense open subset such that $\mu_{Z}^{-1}(U) \to U$ is \'etale.
Then the open subscheme $p^{-1}(\pi^{-1}(U)) \subset X\times_{Z}A$ is an irreducible normal variety.
The closure $M$ of this subset in $X\times_{Z}A$ is the unique irreducible component of $X\times_{Z}A$ that dominates $X$.
We call this $M$ the main component of $X\times_{Z}A$ and equip it with the reduced structure.

Moreover,
$ \alpha$ is the normalization of $M$ if and only if the canonical homomorphism $k(X) {\otimes}_{ k(Z)} k(A) \to k( \widetilde{ X})$ is 
an isomorphism.

If $X, Z, A$ are equipped with surjective endomorphisms equivariantly, then they induce a surjective endomorphism on $ \widetilde{X}$.
\end{lem}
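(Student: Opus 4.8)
The plan is to prove the four assertions in sequence, each reducing to standard commutative-algebra or descent facts about fiber products and finite morphisms.

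First I would establish the irreducibility and normality of $p^{-1}(\pi^{-1}(U))$. Since $\mu_Z^{-1}(U) \to U$ is finite \'etale and $\pi$ is an algebraic fiber space (so $\pi^{-1}(U) \to U$ has geometrically integral fibers and $\mathcal{O}$-connected fibers, and is in particular smooth-free of zerodivisor issues on an open set), the base change $p^{-1}(\pi^{-1}(U)) = \pi^{-1}(U) \times_U \mu_Z^{-1}(U) \to \pi^{-1}(U)$ is finite \'etale. Normality is then inherited: an \'etale cover of the normal variety $\pi^{-1}(U)$ is normal. For irreducibility, note that $\pi^{-1}(U) \to U$ has integral geometric generic fiber and $k(\pi^{-1}(U)) \otimes_{k(U)} k(\mu_Z^{-1}(U))$ is a field because $k(U) \hookrightarrow k(\mu_Z^{-1}(U))$ is a finite separable field extension and $k(\pi^{-1}(U))$ is a regular (even primary) extension of $k(U)$ — this is where $\pi_*\mathcal{O}_X = \mathcal{O}_Z$ enters, guaranteeing $k(Z)$ algebraically closed in $k(X)$. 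Hence the ring $k(X) \otimes_{k(Z)} k(A)$ is a domain, and the generic fiber of $p^{-1}(\pi^{-1}(U)) \to \pi^{-1}(U)$ is $\Spec$ of that domain, so $p^{-1}(\pi^{-1}(U))$ is irreducible.

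Next, the closure $M$ is the unique component dominating $X$: any component of $X \times_Z A$ dominating $X$ must dominate $\pi^{-1}(U)$ since that is dense in $X$, and over $\pi^{-1}(U)$ the fiber product agrees with the irreducible $p^{-1}(\pi^{-1}(U))$, so there is exactly one such component, whose closure is $M$. For the normalization criterion, one observes that any finite dominant morphism $\alpha \colon \widetilde X \to M$ with $\widetilde X$ normal is the normalization if and only if it is birational, i.e. if and only if $k(\widetilde X) = k(M)$; and $k(M)$ is precisely the total quotient ring of the generic stalk, which by the computation above is $k(X) \otimes_{k(Z)} k(A)$ (this tensor product being already a field). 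So $\alpha$ is the normalization iff $k(X) \otimes_{k(Z)} k(A) \to k(\widetilde X)$ is an isomorphism.

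Finally, for the endomorphism statement: given equivariant surjective endomorphisms $f_X, f_Z, f_A$, the universal property of fiber product produces an endomorphism of $X \times_Z A$, which permutes the irreducible components; since $M$ is the \emph{unique} component dominating $X$ and $f_X$ is surjective hence dominant, $M$ is preserved, so we get an endomorphism of $M$, and it lifts uniquely to the normalization $\widetilde X$ by the universal property of normalization (a dominant map from a normal variety). I expect the main obstacle to be the careful verification that $k(X) \otimes_{k(Z)} k(A)$ is a field and computes $k(M)$ — this is where one must use that $\pi$ being an algebraic fiber space makes $k(X)/k(Z)$ a primary (regular) extension, so that tensoring with the separable extension $k(A)/k(Z)$ keeps the ring a domain and in fact a field by finiteness; the rest is formal.
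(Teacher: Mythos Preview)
Your proposal is correct and follows essentially the same route as the paper's proof: \'etale base change for normality, uniqueness of the dominating component, identification of $k(M)$ with $k(X)\otimes_{k(Z)}k(A)$, and the universal properties of fiber product and normalization for the equivariance. The only cosmetic difference is that for irreducibility the paper argues that the base change $p^{-1}(\pi^{-1}(U)) \to \mu_Z^{-1}(U)$ is again an algebraic fiber space over an irreducible base (flat base change of $\pi_*\mathcal{O}_X=\mathcal{O}_Z$), whereas you go directly through the field-theoretic observation that $k(X)\otimes_{k(Z)}k(A)$ is a field; the paper records exactly this field-theoretic fact in the remark immediately following the lemma, so the two arguments are interchangeable.
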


\begin{rmk}\label{maincompfunc}
In the setting of Lemma \ref{main comp}, the normalization of the main component is equal to the normalization of
$X$ in $k(X) {\otimes}_{k(Z)}k(A)$.
Note that since $k(X)\supset k(Z)$ is algebraically closed and $k(A) \supset k(Z)$ is a finite separable extension,
$k(X) {\otimes}_{k(Z)}k(A)$ is a field.
\end{rmk}

\begin{proof}[Proof of Lemma \ref{main comp}]
Since $\mu_{Z}^{-1}(U) \to U$ is flat, $p^{-1}(\pi^{-1}(U)) \to \pi^{-1}(U)$ is an open map 
and $p^{-1}(\pi^{-1}(U)) \to \mu_{Z}^{-1}(U)$ is an algebraic fiber space.
This implies $p^{-1}(\pi^{-1}(U))$ is irreducible.
Since $p^{-1}(\pi^{-1}(U)) \to \pi^{-1}(U)$ is \'etale and $X$ is normal, 
$p^{-1}(\pi^{-1}(U))$ is a normal variety.
This also proves that the uniqueness of the irreducible component that dominates $X$.

Since $k(X) {\otimes}_{k(Z)}k(A)$ is a field, the function field of $M$ is $k(X) {\otimes}_{k(Z)}k(A)$.
Therefore, $ \alpha$ is the normalization of $M$ if and only if $k(X) {\otimes}_{ k(Z)} k(A) \to k( \widetilde{ X})$ is an 
isomorphism.

The last statement follows from the universality of the fiber products and the uniqueness of the main component.
\end{proof}

First, we prove that if $Y$ has a finite $g$-equivariant cover by an abelian variety, then $\mu_X$ as in Theorem \ref{covlem} is quasi-\'etale.

\begin{lem}\label{quasi-etale lemma}
Let $X$ be a normal projective variety and $f \colon X \to X$ an int-amplified endomorphism and assume that there exists the following commutative diagram:
\[
\xymatrix{
\widetilde{f} \acts \widetilde{X} \ar[d]_{\mu_{X}} \ar[r]^{ \widetilde{\pi}} & \widetilde{Y} \ar[d]^{\mu_{Y}} \racts \widetilde{g} \\
f \acts X \ar[r]_{\pi} & Y \racts g,
}
\]
where
\begin{enumerate}
\item $Y$ and $\widetilde{Y}$ are normal projective varieties; 
\item $\pi$ is an algebraic fiber space, $\mu_{X}$ and $\mu_{Y}$ are finite surjective morphism;
\item $ \widetilde{X}$ is the normalization of the main component of $X \times_{Y} \widetilde{Y}$;
\item $g, \widetilde{g}, \widetilde{f}$ are int-amplified endomorphisms.
\end{enumerate}
Furthermore we assume that the following conditions:
\begin{enumerate}
\renewcommand{\labelenumi}{(\alph{enumi})}
\item\label{tilgqet} $\widetilde{g}$ is quasi-\'etale;
\item\label{imEcodim} every prime divisor $E$ on $X$ satisfies $\mathrm{codim} (\pi(E)) \leq 1$;   
\item\label{ramhor} every irreducible component $E$ of $\Supp{R_f}$ satisfies $\pi(E) = Y$
\end{enumerate}
Then $\mu_X$ is quasi-\'etale.

\end{lem}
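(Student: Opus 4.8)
The plan is to analyze the ramification of $\mu_X$ divisor by divisor, using the Riemann–Hurwitz / log-discrepancy bookkeeping together with hypotheses (b) and (c) to control where ramification can occur, and hypothesis (a) to control the ramification of $\widetilde g$. First I would set up the two relevant Riemann–Hurwitz formulas for the square: on one hand $K_{\widetilde X} = \mu_X^* K_X + R_{\mu_X}$, and on the other, since $\widetilde X$ is the normalization of the main component of $X\times_Y\widetilde Y$ with $\widetilde\pi$ and $\pi$ the two fiber-space projections, the generic fiber of $\widetilde\pi$ maps isomorphically (or at least étale in codimension one along generic fibers) onto the generic fiber of $\pi$ via $\mu_X$. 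Concretely, by Lemma \ref{main comp} the map $\mu_X$ is étale over $p^{-1}(\pi^{-1}(U))$ where $U\subseteq Y$ is the étale locus of $\mu_Y$; so any prime divisor $E\subset\widetilde X$ in the ramification locus of $\mu_X$ must dominate $Y$ is \emph{false} in general — rather it must map into $Y\setminus U$ or be $\widetilde\pi$-vertical lying over the branch locus of $\mu_Y$. The first step is therefore to pin down: a prime divisor $E$ on $\widetilde X$ with $a(E,\widetilde X)<1$ relative to $\mu_X$ must satisfy that $\mu_X(E)$ is contained in a $\pi$-vertical divisor of $X$, i.e.\ $\pi(\mu_X(E))$ is a divisor in the branch locus of $\mu_Y$.

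Next I would bring in the endomorphisms. By hypothesis (c), every irreducible component of $\Supp R_f$ is $\pi$-horizontal (dominates $Y$); by hypothesis (b), every prime divisor on $X$ is either horizontal or lies over a prime divisor of $Y$. The key point is that $\mu_X^* R_f$ and $R_{\widetilde f}$ differ in a controlled way: writing $K_{\widetilde X}=\widetilde f^*K_{\widetilde X}+R_{\widetilde f}$ and comparing with $\mu_X\circ\widetilde f=f\circ\mu_X$ (equivariance), one gets the standard relation $R_{\widetilde f}+\widetilde f^*R_{\mu_X}=R_{\mu_X}+\mu_X^*R_f$. Now a $\mu_X$-ramification prime $E$ is $\widetilde f$-(pre)periodic — here I use that $f$ is int-amplified, so $f^n$-equivariance via Theorem \ref{equiv-thm} / \cite{meng} forces the finite set of such $E$'s to be permuted, hence (after replacing $f$ by an iterate) totally invariant. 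Totally invariant prime divisors of an int-amplified endomorphism lie in $\Supp R_f$ (again by the results of Meng). Pushing $E$ down: $\mu_X(E)\subseteq\Supp R_f$, which by (c) is $\pi$-horizontal, contradicting the conclusion of the first step that $\mu_X(E)$ is $\pi$-vertical — unless $R_{\mu_X}=0$, i.e.\ $\mu_X$ is quasi-\'etale. I should be careful that the relevant divisors $E$ on the upstairs side correspondingly satisfy the analogue of (b)/(c) for $\widetilde\pi$ and $\widetilde f$, which follows because (b) and (c) are inherited under the finite equivariant cover $\mu_Y$ together with hypothesis (a): since $\widetilde g$ is quasi-\'etale, $\mu_Y$ does not "create" new vertical ramification, so a $\widetilde\pi$-vertical $E$ maps to a $\pi$-vertical divisor.

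A cleaner way to organize the contradiction, which I would actually write, is: suppose $R_{\mu_X}\ne 0$ and pick a prime component $E$. After replacing all four maps by a common iterate (legitimate by Lemma \ref{lem:intamp}(1) and Theorem \ref{equiv-thm}) we may assume $E$ is totally invariant under $\widetilde f$ and $E\subseteq\Supp R_{\widetilde f}$. Claim: $\widetilde\pi(E)=\widetilde Y$. Indeed if $E$ were $\widetilde\pi$-vertical, then $\mu_X(E)$ is $\pi$-vertical (or of codimension $\ge 2$, excluded by (b) applied upstairs, or rather: $\mu_X$ finite so $\mu_X(E)$ is a divisor, and it is $\pi$-vertical), and the branch divisor of $\mu_Y$ downstairs corresponds via $\mu_X$ étale-in-codim-one over $p^{-1}\pi^{-1}(U)$ to the only place $\mu_X$ can ramify — giving $\mu_X(E)\subset$ a $\pi$-vertical divisor contained in $\Supp R_f$ by total invariance and \cite[Thm 3.3]{meng}, contradicting (c). Hence $E$ is horizontal, so $\widetilde\pi(E)=\widetilde Y$; then $\widetilde g(\widetilde\pi(E))=\widetilde\pi(E)$, and via $\mu_Y\circ\widetilde\pi=\pi\circ\mu_X$ and (a) one tracks that the discrepancy contribution of $E$ to $R_{\mu_X}$ must already appear in $R_{\widetilde g}$ after pushforward, but $\widetilde g$ quasi-\'etale forces that contribution to be $0$. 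The main obstacle I anticipate is exactly this last bookkeeping step — correctly combining the three Riemann–Hurwitz identities (for $\mu_X$, for $\mu_Y$, and for $\widetilde g$ vs. $g$) on the commuting square to isolate the coefficient of $E$ in $R_{\mu_X}$ and show hypothesis (a) kills it; the geometry (total invariance, horizontality) is the easy part, the careful divisor-coefficient chase across the square is where the real work lies.
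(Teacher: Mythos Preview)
Your setup is right: you correctly identify that any prime component $E$ of $\Supp R_{\mu_X}$ is $\widetilde\pi$-vertical (since $\mu_X$ is \'etale over $\pi^{-1}(U)$), and you write down the correct Riemann--Hurwitz identity
\[
R_{\widetilde f}+\widetilde f^{*}R_{\mu_X}=R_{\mu_X}+\mu_X^{*}R_f.
\]
There are two genuine gaps after that.

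\textbf{Gap 1: total invariance of $\Supp R_{\mu_X}$.} You assert that the components of $R_{\mu_X}$ are permuted by $\widetilde f$ ``by Theorem~\ref{equiv-thm} / \cite{meng}'', but that theorem concerns extremal rays, not ramification divisors. The actual reason is hidden in the identity you already wrote: since every component of $\mu_X^{*}R_f$ is $\widetilde\pi$-horizontal by (c) while $\widetilde f^{*}R_{\mu_X}$ is $\widetilde\pi$-vertical, separating horizontal and vertical parts in the identity gives $\widetilde f^{*}R_{\mu_X}\leq R_{\mu_X}$. This inequality (which the paper isolates explicitly) is what forces $\widetilde f^{-1}(\Supp R_{\mu_X})\subseteq\Supp R_{\mu_X}$, hence equality and total invariance. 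You have the ingredients but never perform this separation, so the step is unjustified as written.

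\textbf{Gap 2: the push-down to $X$.} Your contradiction relies on ``$\mu_X(E)\subseteq\Supp R_f$''. Neither reading of your argument yields this. If you mean ``$E\subseteq\Supp R_{\widetilde f}$, hence $\mu_X(E)\subseteq\mu_X(\Supp R_{\widetilde f})\subseteq\Supp R_f$'', the last inclusion is false: from the identity one only gets $\Supp R_{\widetilde f}\subseteq\Supp R_{\mu_X}\cup\mu_X^{-1}(\Supp R_f)$, and $E$ already lies in the first piece. If instead you mean ``$\mu_X(E)$ is totally invariant under $f$, hence lies in $\Supp R_f$ by Meng'', then you need total invariance to descend along the \emph{finite} map $\mu_X$, and it does not: from $\widetilde f^{-1}(E)=E$ one only gets $f(\mu_X(E))=\mu_X(E)$, not $f^{-1}(\mu_X(E))=\mu_X(E)$, because $\mu_X^{-1}(\mu_X(E))$ may contain unramified sheets that $\widetilde f^{-1}$ can move. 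This is exactly why the paper pushes down along the \emph{algebraic fiber space} $\widetilde\pi$ instead: Lemma~\ref{totinvim} applies to $\widetilde\pi$, giving that $\widetilde\pi(E)$ is totally invariant under $\widetilde g$. Condition (b) then guarantees $\widetilde\pi(E)$ is a prime divisor on $\widetilde Y$, Meng's lemma forces $\widetilde g$ to ramify along it, and (a) gives the contradiction. Your attempt never uses (a) or (b) for the contradiction, which is a sign you are on the wrong branch; the ``horizontal case'' and ``coefficient chase'' you describe at the end are vacuous once you know $E$ is vertical.
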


\begin{proof}
There exists a nonempty open subset $U$ of $Y$ such that $\mu_Y^{-1}(U) \to U$ is \'etale.
Then since $\widetilde{\pi}^{-1} (\mu_Y^{-1}(U)) = \mu_X^{-1}(\pi^{-1}(U)) = \pi^{-1}(U) \times_{U} \mu_Y^{-1}(U)$ by Lemma \ref{main comp}, $\mu_X^{-1}(\pi^{-1}(U)) \to \pi^{-1}(U)$ is \'etale.
In particular, any irreducible component $E$ of $\Supp(R_{\mu_X})$ satisfies $\widetilde{\pi}(E) \neq \widetilde{Y}$.
On the other hand, we have
\[
\widetilde{f}^*(R_{\mu_X}) \leq \widetilde{f}^*(R_{\mu_X}) + R_{\widetilde{f}} = \mu_X^*(R_f) + R_{\mu_X}
\]
Since every component of $\mu_X^*(R_f)$ is horizontal by the condition $(c)$ and any component of $R_{\mu_X}$ is vertical, we obtain
\[
\widetilde{f}^*(R_{\mu_X}) \leq R_{\mu_X}
\]
It means that $\Supp{R_{\mu_X}}$ is a totally invariant divisor under $\widetilde{f}$, therefore by Lemma \ref{totinvim}, $\widetilde{\pi}(\Supp{R_{\mu_X}})$ is a totally invariant closed subset under $\widetilde{g}$.
Suppose that $\mu_X$ is not quasi-\'etale.
Let $E$ be an irreducible componet of $\Supp{R_{\mu_X}}$, then $E$ is a prime divisor.
Replacing $\widetilde{g}$ by some iterate, we may assume that $\widetilde{\pi}(E)$ is totally invariant.
By the condition $(b)$, $\mathrm{codim}(\widetilde{\pi}(E))$ is less than or equal to $1$.
Since $E$ is a vertical divisor, $\widetilde{\pi}(E)$ must be a prime divisor on $\widetilde{Y}$.
Since $\widetilde{g}$ is an int-amplified, the coefficients of $\widetilde{g}^*(\widetilde{\pi}(E))$ is grater than $2$ by \cite[Lemma 3.11]{meng}.
It means that $R_{\widetilde{g}} \geq \widetilde{\pi}(E)$,
but it is contradiction to the condition $(a)$.
Hence $\mu_X$ is quasi-\'etale. 
\end{proof}

\begin{lem}\label{totinvim}
Consider the following commutative diagram:
\[
\xymatrix{
X \ar[r]^{f} \ar[d]_{\pi} & X \ar[d]^{\pi} \\
Z \ar[r]_{h}  & Z, 
}
\]
where $X, Z$ are normal projective varieties, $\pi$ is an algebraic fiber space, and
$f, h$ are surjective endomorphisms.
Let $W\subset X$ be a closed subset.
If $f^{-1}(W) = W$ as sets, then $h^{-1}(\pi(W))=\pi(W)$.
\end{lem}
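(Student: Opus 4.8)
I would prove the two inclusions of $h^{-1}(\pi(W))=\pi(W)$ separately. The inclusion $\pi(W)\subseteq h^{-1}(\pi(W))$ is formal: $\pi(W)$ is closed since $\pi$ is proper, and $f^{-1}(W)=W$ forces $f(W)\subseteq W$, hence $f(W)=W$ because $f$ is surjective, so that $h(\pi(W))=\pi(f(W))=\pi(W)$. All the content is in the reverse inclusion $h^{-1}(\pi(W))\subseteq\pi(W)$.

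\textbf{Reduction to $W$ irreducible.} Since $f$ is a finite surjective morphism and $X$ is normal, every irreducible component of $f^{-1}(W_i)$ dominates $W_i$ (going-down over the normal local rings of $X$), and one deduces that $f$ permutes the finitely many irreducible components of $W$, with $f^{-1}(W_i)$ a single component for each component $W_i$. Replacing $f$ and $h$ by a common iterate is harmless: from $\pi(W)\subseteq h^{-1}(\pi(W))\subseteq\cdots\subseteq h^{-n}(\pi(W))=(h^{n})^{-1}(\pi(W))$ the conclusion for $(f^{n},h^{n})$ implies it for $(f,h)$. So we may assume $f^{-1}(W_i)=W_i$ for all $i$, and since $h^{-1}(\pi(W))=\bigcup_i h^{-1}(\pi(W_i))$ it suffices to treat each $W_i$. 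Thus we may take $W$ irreducible; then $V:=\pi(W)$ is irreducible with $h(V)=V$, and we must show $h^{-1}(V)=V$.

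\textbf{Core step.} Here I use that $\pi$ is an algebraic fiber space: since $X$ is normal, $\pi$ is proper and $\pi_*\mathcal{O}_X=\mathcal{O}_Z$, the generic fiber of $\pi$ is normal, proper over $K(Z)$, and has $K(Z)$ as its ring of global functions; hence $K(Z)$ is algebraically closed in $K(X)$ and the general fiber of $\pi$ is irreducible of dimension $\dim X-\dim Z$. Let $U\subseteq Z$ be a dense open set over which this holds. Choose $v\in V$ general enough to lie in $U$, to satisfy $h^{-1}(v)\subseteq U$, and to satisfy $\pi^{-1}(v)\cap W\neq\emptyset$. For any $z\in h^{-1}(v)$ the finite morphism $f$ maps the irreducible fiber $\pi^{-1}(z)$ into $\pi^{-1}(h(z))=\pi^{-1}(v)$; as finite morphisms preserve dimension, $f(\pi^{-1}(z))$ is an irreducible closed subset of $\pi^{-1}(v)$ of the same dimension, hence equals $\pi^{-1}(v)$. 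Picking $w\in\pi^{-1}(v)\cap W$ and writing $w=f(x)$ with $x\in\pi^{-1}(z)$, we get $x\in f^{-1}(W)=W$, so $z=\pi(x)\in\pi(W)=V$. Therefore $h^{-1}(v)\subseteq V$ for all general $v\in V$. Finally, since $Z$ is normal every component of $h^{-1}(V)$ dominates $V$, so $h^{-1}(U\cap V)$ is dense in $h^{-1}(V)$; being contained in $V$, it yields $h^{-1}(V)=\overline{h^{-1}(U\cap V)}\subseteq V$, and with the first inclusion $h^{-1}(V)=V$.

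\textbf{Main obstacle.} The delicate point is the control of the fibers of $\pi$ in the core step. The dimension count is immediate over the open locus $U$ where $\pi$ has irreducible fibers of the expected dimension, but it is not automatic that $\pi(W)$ meets $U$: if $\pi(W)$ is forced to lie inside the closed degeneration locus of $\pi$ one cannot pick a general $v$ there, and must argue differently --- for instance by passing to a further iterate of $h$ so that the relevant $h$-totally-invariant closed subsets of $Z$ decompose into components each totally invariant under that iterate, pulling these back along $\pi$, and then exploiting total invariance of $W$ together with connectedness of the fibers of $\pi$. This is where I expect the proof to need the most care.
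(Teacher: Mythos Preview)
Your argument has a genuine gap, and you have located it precisely yourself: the core step only works for points $v\in V\cap U$ where the fibers of $\pi$ are irreducible of the expected dimension, and you have no guarantee that $V$ meets $U$. Your suggested workaround (further iterates, pullbacks of $h$-invariant loci) is vague and not obviously convergent.

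The paper closes this gap with a single clean observation that makes all of your reductions unnecessary. The point is to use \emph{connectedness} of the fibers of $\pi$ rather than irreducibility of the general fiber. Since $\pi$ is an algebraic fiber space (proper with $\pi_*\mathcal{O}_X=\mathcal{O}_Z$) and $Z$ is normal, every fiber $\pi^{-1}(y)$ is connected. On the other hand, since $X$ is normal and $f$ is finite, $f$ satisfies going-down and is therefore an open map. For any closed point $z\in h^{-1}(\pi(W))$ one has $f^{-1}(\pi^{-1}(h(z)))=\pi^{-1}(h^{-1}(h(z)))$, and the restriction of $f$ to this set is still open onto $\pi^{-1}(h(z))$. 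As $h^{-1}(h(z))$ is a finite set of closed points, $\pi^{-1}(z)$ is open and closed in $\pi^{-1}(h^{-1}(h(z)))$, so $f(\pi^{-1}(z))$ is open and closed in the connected fiber $\pi^{-1}(h(z))$, hence equals it. Now pick $w\in W$ with $\pi(w)=h(z)$; then $w=f(x)$ for some $x\in\pi^{-1}(z)$, so $x\in f^{-1}(W)=W$ and $z=\pi(x)\in\pi(W)$.

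This works for \emph{every} closed point $z\in h^{-1}(\pi(W))$, with no genericity, no reduction to irreducible $W$, and no passage to iterates. Your dimension-count argument on irreducible general fibers is replaced by the open--closed argument on connected arbitrary fibers; that is the idea you are missing.
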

\begin{proof}
Since $h(\pi(W))=\pi(f(W))\subset \pi(W)$, we have $\pi(W)\subset h^{-1}(\pi(W))$.
Take a closed point $z\in h^{-1}(\pi(W))$.
Since $X$ is a normal variety and $f$ is a finite morphism, $f$ is an open map by going down.
Therefore, the induced map $\pi^{-1}(h^{-1}(h(z))) \to \pi^{-1}(h(z))$ is also an open map.
Thus $f(\pi^{-1}(z))$ is an open and closed subset of $\pi^{-1}(h(z))$ and therefore $f(\pi^{-1}(z))=\pi^{-1}(h(z))$.
Since $\pi^{-1}(h(z))\cap W \neq \emptyset$, we get $\emptyset \neq \pi^{-1}(z)\cap f^{-1}(W) =  \pi^{-1}(z)\cap W$, which means
$z\in \pi(W)$.
\end{proof}

\begin{lem}\label{irreducible fiber lemma}
Let $\pi \colon X \to Y$ be a Mori fiber space of normal $\Q$-factorial klt projective varieties.
Then for any prime divisor $E$ on $Y$, $\pi^{-1}(E)$ is an irreducible codimension one closed subset of $X$.
In particular, any prime divisor $F$ on $X$ satisfies $\mathrm{codim}(\pi(F)) \leq 1 $.
\end{lem}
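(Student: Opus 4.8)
The plan is to reduce the irreducibility statement to the ray-theoretic structure of a Mori fiber space. First I would recall that, since $\pi \colon X \to Y$ is a contraction of a $K_X$-negative extremal ray $R$, a curve $C$ in $X$ is contracted by $\pi$ if and only if $[C] \in R$; in particular $\pi$ has connected fibers (it is an algebraic fiber space, as $\pi_*\mathcal{O}_X = \mathcal{O}_Y$) and the relative Picard number is $\rho(X/Y) = 1$. Let $E$ be a prime divisor on $Y$ and set $F := \pi^{-1}(E)$, which is a codimension one closed subset of $X$ because $\pi$ is surjective and equidimensional in codimension $0$ over the generic point of $E$ (or more directly: $F$ has codimension one since $E$ does and $\pi$ is dominant with irreducible general fiber). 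Write $F = \sum_{i} F_i$ for its irreducible components (with reduced structure), each $F_i$ a prime divisor on $X$ with $\pi(F_i) = E$; I must show there is only one.

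The key step is the following. Since $\rho(X/Y) = 1$ and $Y$ is $\Q$-factorial, the class of each $F_i$ in $N^1(X/Y) \otimes \Q \cong \Q$ is a multiple of the class of $\pi^*E$; more precisely, each $F_i$ is $\pi$-linearly equivalent (up to $\Q$-coefficients) to a positive multiple of $\pi^*E$, because $F_i$ is an effective divisor dominating $E$, hence $\pi$-big is not needed — what is needed is that $F_i \equiv_Y r_i\, \pi^* E$ for some $r_i > 0$. Then $\pi^*E = \sum_i m_i F_i$ with $m_i > 0$ (the fiber decomposition) forces $1 = \sum_i m_i r_i$ after suitable normalization, but this alone is not a contradiction; instead I would argue via connectedness: suppose $F = F_1 \cup F_2$ with $F_1, F_2$ nonempty and no common component. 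Pick a general point $y \in E$; the fiber $\pi^{-1}(y)$ is connected (Stein factorization / $\pi_*\mathcal{O}_X = \mathcal{O}_Y$ and $Y$ normal), and $\pi^{-1}(y) \subset F$, so $\pi^{-1}(y)$ meets both $F_1$ and $F_2$ along its general incarnation, whence $F_1 \cap F_2 \neq \emptyset$ and in fact $F_1 \cap F_2$ dominates $E$. Now intersect $F_1$ and $F_2$ with a general complete intersection curve $C$ in a fiber of $\pi$ over a point of $E$: since $\rho(X/Y) = 1$, $C \cdot F_1 > 0$ and $C \cdot F_2 > 0$, but $C \subset F \subset \pi^{-1}(E)$ and $C \cdot \pi^*(\text{anything supported on } E) $ being zero gives $C \cdot (m_1 F_1 + m_2 F_2 + \cdots) = C \cdot \pi^* E = 0$, contradicting positivity of each $C \cdot F_i$. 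Hence $F$ is irreducible.

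The final assertion follows formally: let $F$ be any prime divisor on $X$. If $\pi(F) = Y$ there is nothing to prove, so assume $\pi(F) \subsetneq Y$, i.e.\ $F$ is $\pi$-vertical. Let $E$ be the prime divisor on $Y$ that is the closure of $\pi(F)$ if $\mathrm{codim}(\pi(F)) = 1$ — in that case we are done; if $\mathrm{codim}(\pi(F)) \geq 2$, then $F \subseteq \pi^{-1}(\pi(F))$, and since $\pi$ is equidimensional (a Mori fiber space of $\Q$-factorial klt varieties is equidimensional by the length-of-extremal-rays bound, or because $\rho(X/Y)=1$ prevents the fiber dimension from jumping) the fiber over any point of $\pi(F)$ has dimension $\dim X - \dim Y$, so $\dim F \leq \dim \pi(F) + (\dim X - \dim Y) \leq (\dim Y - 2) + (\dim X - \dim Y) = \dim X - 2$, contradicting that $F$ is a divisor. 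Therefore $\mathrm{codim}(\pi(F)) \leq 1$.

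I expect the main obstacle to be making the equidimensionality / $\rho(X/Y) = 1$ input rigorous without circularity: one must either cite the standard fact that a Mori fiber space from a $\Q$-factorial klt variety is equidimensional (which rests on the cone theorem and boundedness of extremal ray lengths, all available in characteristic zero), or run the intersection-theoretic argument above purely in terms of $N^1(X/Y)$ being one-dimensional. The connectedness of fibers is harmless (Stein factorization plus $\pi_*\mathcal{O}_X = \mathcal{O}_Y$), and the numerical contradiction $C \cdot \pi^* E = 0$ versus $C \cdot F_i > 0$ is the clean heart of the argument.
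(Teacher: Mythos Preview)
Your approach to the first assertion is the same as the paper's in spirit --- use connectedness of fibres together with $\rho(X/Y)=1$ to produce a numerical contradiction --- but the execution contains a genuine error. You claim that for a curve $C$ in a fibre over a point of $E$ one has $C\cdot F_1>0$ and $C\cdot F_2>0$ ``since $\rho(X/Y)=1$''. This is false: if $C'$ is a curve in a general fibre $\pi^{-1}(y')$ with $y'\notin E$, then $C'\cap F_i=\emptyset$ and so $C'\cdot F_i=0$; since all $\pi$-contracted curves are numerically proportional, $C\cdot F_i=0$ for \emph{every} contracted curve $C$. Consequently your intended contradiction $\sum m_i(C\cdot F_i)=0$ with each summand positive never materialises. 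The paper's fix is to choose $C$ more carefully: pick $y\in\pi(F'\setminus F)$, use connectedness of $\pi^{-1}(y)$ to find an integral curve $C$ with $C\cap F\neq\emptyset$ but $C\not\subset F$, so that $C\cdot F>0$; then compare with $C'\cdot F=0$ for $C'$ in a general fibre. The contradiction is between these two numbers, not within a single sum.

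For the second assertion your route via equidimensionality is an unnecessary detour, and the justifications you suggest (length of extremal rays, or that $\rho(X/Y)=1$ forbids fibre jumping) are not standard facts one can simply cite. The paper avoids this entirely: if $\mathrm{codim}\,\pi(F)\geq 2$, choose any prime divisor $E$ on $Y$ containing $\pi(F)$; then $F\subset\pi^{-1}(E)$, and by the first part $\pi^{-1}(E)$ is irreducible of codimension one, so $F=\pi^{-1}(E)$ as sets, whence $\pi(F)=E$, a contradiction. This uses only what has already been proved and no auxiliary equidimensionality statement.
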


\begin{proof}
Suppose that $\pi^{-1}(E)$ has two components.
Then we can take irreducible components $F$ and $F'$ such that $\mathrm{codim}(F) = 1$, $\pi(F)=E$ and $F \cap F' \neq \emptyset$. 
We take a closed point $y \in \pi(F' \backslash F)$, then there exist closed points $x \in F$ and $x' \in F' \backslash F $ such that $\pi(x)=\pi(x')=y$.
Since $\pi$ has connected fibers, we can take a connected curve containing $x$ and $x'$, so we can take an integral curve $C$ such that $C \cap F \neq \emptyset $ and C is not contained in $F$. 
In particular, the intersection number of $C$ and $F$ is positive.
However, taking a closed point $y' \in Y \backslash E$ and an integral curve $C' \subset \pi^{-1}(y')$ as sets, we have $(C'\cdot F) = 0 $.
It contradicts to the fact that the relative Picard rank is equal to one.

Next, we assume that there exists a prime divisor $F$ on $X$ such that $\mathrm{codim}(\pi(F)) \geq 2$.
Then we can take a prime divisor $E$ on $Y$ containing $\pi(F)$.
Hence we have $F \subset \pi^{-1}(E)$ as sets, but by the first assertion, $F = \pi^{-1}(E)$ as sets.
In particular, $\pi(F) = E$ and it is contradiction.

\end{proof}

\begin{defn}
Let $X$ be a normal variety and $\Delta$ an effective $\Q$-Weil divisor on $X$.
We say that $\Delta$ has \emph{standard coefficients} if for any prime divisor $E$ on $X$, there exists a positive integer $m$ such that $\ord_{E}(\Delta) = \frac{m-1}{m}$.

\end{defn}

\begin{lem}\label{index-one covering lem}
Let $X$ be a normal projective variety and $\Delta$ an effective $\Q$-Weil divisor on $X$ such that $K_X + \Delta $ is $\Q$-linearly equivalent to $0$.
Then there exists a finite surjective morphism $\mu \colon \widetilde{X} \to X$ from normal projective variety such that the following conditions hold:
\begin{itemize}
    \item $\mu^*(K_X +\Delta)$ is a principal divisor, that is,
    $\mu^*(K_X+\Delta) \sim 0$;
    \item if  $\mu' \colon X' \to X$ is a finite surjective morphism from a normal projective variety such that $\mu'^*(K_X+\Delta)$ is a principal divisor, then $\mu'$ factors through $\mu$.
\end{itemize}
Furthermore if $\Delta$ has standard coefficients, then $R_{\mu} = \mu^*(\Delta)$.
In particular $K_{\widetilde{X}}$ is a principal divisor.

\end{lem}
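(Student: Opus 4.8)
The map $\mu$ in the statement is the \emph{index-one cover} of the log Calabi--Yau pair $(X,\Delta)$, which I would build by the classical cyclic-cover recipe. Since $K_X+\Delta\sim_\Q 0$, let $r$ be the smallest positive integer with $r(K_X+\Delta)$ integral and $r(K_X+\Delta)\sim 0$, and fix $f\in K(X)^\times$ with $\mathrm{div}_X(f)=r(K_X+\Delta)$ (for a fixed choice of $K_X$). Because $k$ is algebraically closed of characteristic zero it contains all $r$-th roots of unity, and minimality of $r$ forces the class of $f$ to have order exactly $r$ in $K(X)^\times/(K(X)^\times)^r$ (if $f=h^d$ with $d\mid r$, $d<r$, then $\mathrm{div}_X(h)=\tfrac rd(K_X+\Delta)$ is integral and $\sim 0$, contradicting minimality); hence $z^r-f$ is irreducible over $K(X)$ and $L:=K(X)[z]/(z^r-f)$ is a degree-$r$ cyclic Galois extension of $K(X)$. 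Let $\mu\colon\widetilde X\to X$ be the normalization of $X$ in $L$: then $\widetilde X$ is a normal projective variety finite over $X$, with $K(\widetilde X)=L$ and $z\in L$ satisfying $z^r=\mu^*f$. Using $\mathrm{div}_{\widetilde X}(\mu^*f)=\mu^*\mathrm{div}_X(f)$, we get
\[
r\cdot\mathrm{div}_{\widetilde X}(z)=\mathrm{div}_{\widetilde X}(z^r)=\mu^*\mathrm{div}_X(f)=r\,\mu^*(K_X+\Delta),
\]
so $\mu^*(K_X+\Delta)=\mathrm{div}_{\widetilde X}(z)\sim 0$. (Changing $K_X$ or $f$ rescales $f$ within $(K(X)^\times)^r\cdot k^\times$, so $L$, and hence $\widetilde X$, is unchanged.)

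For the universal property, let $\mu'\colon X'\to X$ be finite surjective with $X'$ a normal projective variety and $\mu'^*(K_X+\Delta)=\mathrm{div}_{X'}(g_0)$. Comparing with $\mu'^*\mathrm{div}_X(f)=r\,\mu'^*(K_X+\Delta)=\mathrm{div}_{X'}(g_0^r)$ shows $g_0^r/\mu'^*f$ is a global unit on the projective variety $X'$, hence lies in $k^\times$; after dividing $g_0$ by an $r$-th root of it we obtain $g\in K(X')$ with $g^r=\mu'^*f$. Then $z\mapsto g$ gives a $K(X)$-algebra map $L\to K(X')$, injective since $L$ is a field, so it identifies $L$ with a subfield of $K(X')$ over $K(X)$. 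Since $X'$ is normal and finite over $X$ it is the normalization of $X$ in $K(X')$, and $\widetilde X$ is the normalization of $X$ in $L\subseteq K(X')$; the inclusion of the integral closure of $\mathcal O_X$ in $L$ into the integral closure of $\mathcal O_X$ in $K(X')$ induces a morphism $\nu\colon X'\to\widetilde X$ over $X$, i.e.\ $\mu\circ\nu=\mu'$ (and $\nu$ is automatically finite surjective).

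Now assume $\Delta$ has standard coefficients; fix a prime divisor $E$ on $X$ and write $\mathrm{ord}_E(\Delta)=\tfrac{m_E-1}{m_E}$, where $m_E\mid r$ since $r(K_X+\Delta)$ is integral. At the DVR $\mathcal O_{X,E}$ with valuation $v=\mathrm{ord}_E$ one has $v(f)=r\,\mathrm{ord}_E(K_X)+r-\tfrac r{m_E}$, hence $\gcd\bigl(r,v(f)\bigr)=\gcd\bigl(r,\tfrac r{m_E}\bigr)=\tfrac r{m_E}$. Since $L=K(X)(f^{1/r})$ is a Kummer extension and the residue characteristic is zero, the ramification index of $v$ in $L$ equals $r/\gcd(r,v(f))=m_E$; therefore $\mu$ ramifies with index exactly $m_E$ along every prime divisor $\widetilde E$ of $\widetilde X$ over $E$. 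Consequently the coefficient of $R_\mu$ along such an $\widetilde E$ is $m_E-1$, while that of $\mu^*\Delta$ is $m_E\cdot\tfrac{m_E-1}{m_E}=m_E-1$; over prime divisors with $m_E=1$ (i.e.\ outside $\Supp\Delta$) the map is unramified and $\mu^*\Delta$ has coefficient $0$ there. Hence $R_\mu=\mu^*\Delta$, and so $K_{\widetilde X}=\mu^*K_X+R_\mu=\mu^*(K_X+\Delta)\sim 0$, i.e.\ $K_{\widetilde X}$ is principal.

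The only point requiring real care is the last paragraph: $\Q$-linear equivalence $R_\mu\sim_\Q\mu^*\Delta$ is immediate from $\mu^*(K_X+\Delta)\sim 0$, but the asserted \emph{equality} forces one to pin down the ramification index of the cover along each component of $\Supp\Delta$, which is exactly where minimality of $r$ and the standard-coefficient hypothesis enter through the $\gcd$ computation above. Everything else is the standard formalism of cyclic covers and normalizations.
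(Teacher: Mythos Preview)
Your proof is correct and follows the same overall architecture as the paper's: both construct $\widetilde X$ as the normalization of $X$ in the Kummer extension $K(X)(\alpha^{1/r})$ with $r$ minimal, verify $\mu^*(K_X+\Delta)=\mathrm{div}(z)$, and obtain the universal property by producing an $r$-th root of $\alpha$ in $K(X')$ after scaling by a constant from $k^\times$.

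The one substantive difference is the ramification computation. The paper works by hand at the DVR $(R,(\varpi))$ attached to a prime divisor $E$: writing $\alpha=u\varpi^{b\rho}$ with $m_0=mb$ and $\rho=m-1+ma$, it builds an explicit tower $R\subset R'=R[Y]/(Y^b-u)\subset R''=R'[Z]/(Z^m-Y^{-1}\varpi)$ and checks that $R''$ is already normal with uniformizer $Z$, so $\mathrm{ord}(\varpi)=m$ in every localization. You instead invoke the standard tame Kummer ramification formula $e=r/\gcd(r,v(f))$ and reduce everything to the one-line computation $\gcd(r,v(f))=\gcd(r,\,r\cdot\mathrm{ord}_E K_X + r - r/m_E)=\gcd(r,r/m_E)=r/m_E$. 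Your route is cleaner and makes transparent exactly where the standard-coefficient hypothesis and the minimality of $r$ enter (via $m_E\mid r$), at the cost of citing a fact the paper chose to reprove from scratch. Either way the conclusion $R_\mu=\mu^*\Delta$ and $K_{\widetilde X}\sim 0$ follows.
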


\begin{proof}
Let $m_0 = \min\{m\  |\  m(K_X+\Delta)\sim 0\}$ and take a non-zero rational section $\alpha \in K(X) =K$ such that $\mathrm{div}(\alpha)=m_0(K_X+\Delta)$.
Let $L=K[T]/(T^{m_0}-\alpha)$.
Note that $L$ is a field.
Let $\mu \colon \widetilde{X} \to X$ be the normalization of $X$ in $L$.
Then we have
\[
\mathrm{div}(T)=\frac{1}{m_0}\mathrm{div}(\alpha) = \mu^*(K_X+\Delta),
\]
so $\mu^*(K_X+\Delta)$ is a principal divisor.
Moreover, let $\mu' \colon X' \to X$ be a finite surjective morphism from a normal projective variety such that $\mu'^*(K_X+\Delta)$ is a principal divisor.
Then there exists a non-zero rational function $\beta \in K(X')=L'$ such that $\mathrm{div}(\beta)=\mu'^*(K_X+\Delta)$. 
In particular, we have $m_0 \mathrm{div}(\beta) = \mathrm{div}(\alpha)$.
Since the base field is algebraically closed, we may assume that $\beta^{m_0}=\alpha$.
It means that there exists an injective $K$-algebra homomorphism from $L$ to $L'$,
so $\mu'$ factors through $\mu$.

Next we assume that $\Delta$ has standard coefficients.
Let $E$ be a prime divisor on $X$, $m$ a positive integer, $a$ an integer such that $\ord_E(K_X+\Delta)=\frac{m-1}{m}+a$.
Let $(R,(\varpi))$ be the DVR associated to $E$ and $S$ the normalization of $R$ in $L$.
Then it is enough to show that the order of $\varpi$ at every maximal ideal of $S$ is equal to $m$.
Since the order of $\alpha$ along $E$ is equal to $m_0(\frac{m-1}{m}+a)$, there exists an unit $u$ in $R$ such that
\[
\alpha=u \varpi^{m_0(\frac{m-1}{m}+a)}.
\]
Since every coefficient of $\mathrm{div}(\alpha)$ is integer, there exists a positive integer $b$ such that $mb=m_0$.
Let $\rho=m-1+ma$.
Then
\[
\alpha=u \varpi^{b\rho}.
\]
Now, we have
\[
(\frac{T^m}{\varpi^{\rho}})^b = \frac{\alpha}{\varpi^{b\rho}} = u,
\]
so $S$ contains $\frac{T^m}{\varpi^{\rho}}$.
In particular, $R \hookrightarrow S$ factors through $R'=R[Y]/(Y^b-u)$.
Since $K'=K[Y]/(Y^b-u)$ is a field and $R' \to K'$ is injective,
$R'$ is an integral domain and satisfies $R \subset R' \subset S$.
Since $R'$ is \'etale over $R$, $\varpi$ is an uniformizer of $R'_{i}=R'_{\mathfrak{p}_i}$ for every maximal ideal $\mathfrak{p_i}$ of $R'$.
We set $S_i = S \otimes_{R_i} R'_i \subset L$.
Now, we have
\[
(\frac{\varpi^{1+a}}{T})^m=\frac{\varpi^\rho}{T^m} \cdot \varpi = Y^{-1}\varpi.
\]
Since $Y$ is an unit in $R'$, $R_i' \hookrightarrow S_i$ factors through $R''_i=R'_i[Z]/(Z^m-Y^{-1}\varpi)$.
Since $K''=K'[Z]/(Z^m-Y^{-1}\varpi)$ is a field and $R''_i \to K''$ is injective,
$R''_i$ is an integral domain and satisfies $R'_i \subset R''_i \subset S_i$.
Since we have
\[
(\varpi^{1+a}Z^{-1})^{m_0}=(\varpi^{(1+a)m}Z^{-m})^b=(\varpi^{\rho}Y)^b=\alpha,
\]
the quotient field of $R''_i$ is $L$.
Furthermore, since $(Z)$ is the unique maximal ideal of $R''_i$, we obtain $R''_i=S_i$ and $\ord_{S_i}(\varpi)=m$.
Therefore we obtain the last assertion.
\end{proof}

\begin{lem}\label{canonical bundle formula lem}
Let $\pi \colon X \to Y$ be an algebraic fiber space of normal $\Q$-Gorenstein projective varieties.
Assume that $-K_X$ is $\pi$-ample and $X$ is klt.
Then $Y$ is also klt.
\end{lem}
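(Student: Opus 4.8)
The plan is to reduce the statement to the canonical bundle formula, after first manufacturing a convenient boundary on $X$. Since $Y$ is projective, I fix an ample Cartier divisor $A$ on $Y$. Because $-K_X$ is $\pi$-ample, the $\Q$-Cartier divisor $-K_X+\pi^*(mA)$ is ample on $X$ for $m\gg 0$ (a standard fact: relatively ample plus the pullback of an ample divisor becomes ample after a large twist). Every ample $\Q$-Cartier class on a klt variety is $\Q$-linearly equivalent to an effective $\Q$-divisor $\Delta$ with $(X,\Delta)$ klt — concretely, take $n\gg 0$ divisible by the $\Q$-Gorenstein index so that $|n(-K_X+\pi^*(mA))|$ is base point free, let $D$ be a general member, and set $\Delta=\tfrac1n D$; then $(X,\Delta)$ is klt by a Bertini-type argument. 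By construction $(X,\Delta)$ is klt, $\Delta\ge 0$, and $K_X+\Delta\sim_{\Q}\pi^*(mA)$, so in particular $K_X+\Delta\sim_{\Q,\pi}0$.

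Now $\pi\colon X\to Y$ is an algebraic fiber space and $(X,\Delta)$ is an lc-trivial fibration over $Y$ with klt, effective boundary, so the canonical bundle formula (Ambro; Fujino--Gongyo; see also \cite{fujino}) applies and yields an effective $\Q$-divisor $\Delta_Y$ on $Y$ such that $(Y,\Delta_Y)$ is klt and $K_X+\Delta\sim_{\Q}\pi^*(K_Y+\Delta_Y)$. Since $Y$ is $\Q$-Gorenstein by hypothesis and $\Delta_Y\ge 0$, for every prime divisor $E$ over $Y$ we have $a(E,Y,0)\ge a(E,Y,\Delta_Y)>-1$, because pulling back an effective boundary only decreases discrepancies. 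Hence $Y$ is klt, which is the assertion.

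The step carrying all the weight is the invocation of the canonical bundle formula, and inside it the control of the moduli part: one writes $\Delta_Y=B_Y+M_Y$, where the discriminant part $B_Y$ is automatically effective exactly because the source pair $(X,\Delta)$ is klt, whereas the moduli part $M_Y$ is a priori only a b-nef $\Q$-linear equivalence class, and the real content is that a representative can be chosen for which $(Y,B_Y+M_Y)$ is again klt. This is precisely the refined form of the canonical bundle formula I would cite, and it is where the genuine difficulty lies; by contrast, the preliminary production of $\Delta$ is routine, but it is not optional — it is what lets us replace the $\pi$-ample class $-K_X$ by a $\pi$-trivial one on \emph{all} of $X$, not merely on the generic fiber, and twisting by $\pi^*(mA)$ is exactly what accomplishes this.
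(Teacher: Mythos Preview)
Your proposal is correct and follows essentially the same approach as the paper: both twist $-K_X$ by the pullback of a sufficiently ample divisor on $Y$ to make it globally ample, pick a general member to obtain a klt pair $(X,\Delta)$ with $K_X+\Delta\sim_{\Q}\pi^*(\text{ample})$, and then invoke Ambro's canonical bundle formula \cite[Theorem~4.1]{ambro} to conclude that $(Y,\Delta_Y)$ is klt for some effective $\Delta_Y$. Your final paragraph unpacking the discriminant/moduli decomposition is more explicit than the paper's one-line citation, but the argument is the same.
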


\begin{proof}
There exists an ample Cartier divisor $H$ on $Y$ such that $-K_X+\pi^*H$ is an ample $\Q$-Cartier divisor.
Then for a general element $B \in |-K_X+\pi^*H|_{\Q}$, $(X,B)$ is a klt pair and $K_X+B \sim_{\Q} \pi^*H$.
By Ambro's canonical bundle formula (\cite[Theorem 4.1]{ambro}), we find an effective $\Q$-Weil divisor $B_Y$ such that $(Y,B_Y)$ is a klt $\Q$-Gorenstein pair.
In particular, $Y$ is klt.
\end{proof}

\begin{thm}[\textup{Theorem \ref{covlem}}]\label{covlem'}
Consider the following commutative diagram
\[
\xymatrix{
X \ar[r]^{f} \ar[d]_{\pi} & X \ar[d]^{\pi} \\
Y \ar[r]_{g} & Y, 
}
\]
where $X$ is a klt $\Q$-factorial normal projective varieties, $\pi$ is a Mori fiber space and
$f$ is an int-amplified endomorphism.
Assume that for every irreducible component $E$ of $\Supp R_{f}$, we have $\pi(E)=Y$.  
Then we have the equivariant  commutative  diagram 
\[
\xymatrix{
f \acts X \ar[d]_{\pi}& \ar[l]_{\mu_{X}} \widetilde{X} \ar[d]^{ \widetilde{\pi}} \racts \widetilde{f} \\
g \acts Y & \ar[l]^{\mu_{Y}} A \racts g_{A},
}
\]
where $A$ is an abelian variety, $\mu_{Y}$ is a finite surjective morphism,
$ \widetilde{X}$ is a $\Q$-Gorenstein klt normal projective variety, $\mu_{X}$ is a finite surjective quasi-\'etale morphism, and  
$ \widetilde{\pi}$ is an algebraic fiber space.  
Moreover, $ \widetilde{X}$ is the normalization of the main component of $X {\times}_{Y}A$. 
\end{thm}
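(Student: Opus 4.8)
The plan is to first produce the abelian variety $A$ together with a finite surjective $g$-equivariant morphism $\mu_Y\colon A\to Y$, and then to \emph{define} $\widetilde X$, $\widetilde f$ and $g_A$ as the objects forced by the normalized fiber product of Lemma~\ref{main comp}, checking at the end that $\mu_X$ comes out quasi-\'etale. For the reductions: since $\pi f=g\pi$ and $f$ is int-amplified, $g$ is int-amplified by Lemma~\ref{lem:intamp}(2); since $\pi$ is a Mori fiber space with $X$ $\Q$-factorial, $Y$ is $\Q$-factorial; and since $-K_X$ is $\pi$-ample and $X$ is klt, $Y$ is klt by Lemma~\ref{canonical bundle formula lem}. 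So the whole problem collapses to constructing the cover $\mu_Y$.

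The key point is that the hypothesis ``$R_f$ has no vertical component'' forces $g$ to be \'etale with respect to the orbifold structure on $Y$ coming from the fibres of $\pi$. Writing $\pi^*D=\sum_i m_{D,i}E_{D,i}$ for a prime divisor $D$ on $Y$ and comparing the two expressions for $f^*\pi^*D=\pi^*g^*D$, the assumption that $f$ has ramification index $1$ along every vertical prime divisor of $X$ (which is exactly the horizontality hypothesis) yields, for each component $E_{D',j}$ of $\pi^{-1}(D')$ mapping onto a component of $\pi^{-1}(g(D'))$, the identity $e_{D'}\,m_{D',j}=m_{g(D'),\,i}$, where $e_{D'}$ is the ramification index of $g$ along $D'$. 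Packaging these multiplicities into the discriminant boundary $B_Y$ (for $Y$ a curve, $B_Y=\sum_{t}(1-1/m_t)\,t$ with $m_t$ the multiplicity of the fibre over $t$, which has standard coefficients; in general one extracts $B_Y$ from the canonical bundle formula applied to a general $B\in|-K_X+\pi^*H|_\Q$ with $H$ ample on $Y$, together with a nef moduli part), the identity becomes $g^*(K_Y+B_Y)=K_Y+B_Y$ as $\Q$-divisors, after choosing compatible canonical divisors and replacing $g$ by an iterate, the moduli part being handled by the same $g^*$-invariance. Since $g$ is int-amplified, every eigenvalue of $g^*$ on $N^1(Y)$ has absolute value $>1$, so a $g^*$-fixed numerical class is $0$; hence $K_Y+B_Y\equiv0$, and (immediately when $Y$ is a curve, and by abundance for klt pairs with numerically trivial log canonical divisor in general) $K_Y+B_Y\sim_\Q0$.

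Now apply Lemma~\ref{index-one covering lem} to $(Y,B_Y)$ to get a finite surjective $\nu\colon\widetilde Y\to Y$ with $\nu^*(K_Y+B_Y)\sim0$; as $B_Y$ has standard coefficients, $K_{\widetilde Y}\sim0$, and $\widetilde Y$ is klt because $(Y,B_Y)$ is. Since $g^*(K_Y+B_Y)\sim_\Q K_Y+B_Y$, the universal property in Lemma~\ref{index-one covering lem} lifts $g$ (after an iterate) to $g'\colon\widetilde Y\to\widetilde Y$, which is int-amplified by Lemma~\ref{lem:intamp}(3); thus $\widetilde Y$ is a klt projective variety with $K_{\widetilde Y}\equiv0$ admitting an int-amplified endomorphism, hence is a $Q$-abelian variety (here I invoke the structure theory underlying Theorem~\ref{equiMMP}, cf.~\cite{cmz}). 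Composing its quasi-\'etale abelian cover $A\to\widetilde Y$ — through which $g'$ lifts, after a further iterate, to an endomorphism $g_A$ by the remark following Theorem~\ref{equiMMP} — with $\nu$ produces $\mu_Y\colon A\to Y$, finite surjective and $g_A$-equivariant, with $g_A$ int-amplified by Lemma~\ref{lem:intamp}(3). Set $\widetilde X$ to be the normalization of the main component of $X\times_Y A$, equipped with the induced endomorphism $\widetilde f$ and structure morphism $\widetilde\pi\colon\widetilde X\to A$ (Lemma~\ref{main comp}, Remark~\ref{maincompfunc}); $\widetilde f$ is int-amplified by Lemma~\ref{lem:intamp}(3). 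The hypotheses of Lemma~\ref{quasi-etale lemma} are now met: $\widetilde g=g_A$ is quasi-\'etale because every surjective endomorphism of an abelian variety is \'etale in characteristic zero; every prime divisor on $X$ has image of codimension $\le1$ in $Y$ by Lemma~\ref{irreducible fiber lemma}; and $R_f$ is horizontal by hypothesis. Hence $\mu_X$ is quasi-\'etale, so $K_{\widetilde X}=\mu_X^*K_X$ is $\Q$-Cartier and $\widetilde X$ is klt, and $\widetilde\pi$ is an algebraic fiber space since $\mu_Y$ is finite and the generic fibre of $\pi$ is geometrically integral (so that of $\widetilde\pi$ is too).

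The step I expect to be the main obstacle is the orbifold computation: making precise that horizontality of $R_f$ is exactly the statement that $g$ is crepant-\'etale over $(Y,B_Y)$ — in particular choosing the orbifold multiplicities correctly in the presence of reducible fibres, and controlling the moduli part of the canonical bundle formula when $\dim Y>1$ — and then deducing $K_Y+B_Y\equiv0$ from int-amplifiedness. For $Y$ a curve, which is the case actually needed for Theorem~\ref{main thm}, the multiplicity identity and the degree argument make the whole step elementary.
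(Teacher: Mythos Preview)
Your strategy matches the paper's: define an orbifold boundary on $Y$, show it is $g$-fixed, pass to the index-one cover of Lemma~\ref{index-one covering lem}, lift $g$, produce an abelian cover, and apply Lemma~\ref{quasi-etale lemma}. Two points where the paper is sharper. First, it invokes Lemma~\ref{irreducible fiber lemma} at the outset to know that $\pi^{-1}(E)$ is a \emph{single} prime divisor for every prime divisor $E\subset Y$; this collapses your $m_{D,i}$ to one multiplicity $m_E$, defines $\Delta=\sum_E(1-1/m_E)E$ directly with standard coefficients in all dimensions, and the elementary identity $g^*\Delta-\Delta=R_g$ gives $g^*(K_Y+\Delta)\sim K_Y+\Delta$ as an honest linear equivalence --- so no canonical bundle formula, no moduli part, and no iterate is needed to lift $g$.

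Second, and this is where your argument has a gap, the line ``$\widetilde Y$ is klt because $(Y,B_Y)$ is'' is not justified as written: you proved $Y$ is klt, not $(Y,B_Y)$, and if $B_Y$ is the multiplicity divisor there is no direct reason for this (while if $B_Y$ is the discriminant of the canonical bundle formula it need not have standard coefficients, so Lemma~\ref{index-one covering lem} no longer forces $K_{\widetilde Y}\sim 0$ and you lose quasi-\'etaleness of $g'$). The paper sidesteps this: it only shows $(Y,\Delta)$ is log canonical (via \cite{brou-hor}) and gets $K_Y+\Delta\sim_\Q 0$ from \cite{gong}; then, to obtain kltness of the cover $Y_1$, it first forms $X_1\to Y_1$, checks $\mu_{X_1}$ is quasi-\'etale via Lemma~\ref{quasi-etale lemma} (so $X_1$ is klt with $-K_{X_1}$ relatively ample), and only then applies Lemma~\ref{canonical bundle formula lem} to $\pi_1$ to conclude $Y_1$ is klt --- at which point \cite[Theorem~5.2]{meng} produces the abelian cover. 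Thus Lemma~\ref{quasi-etale lemma} is used twice, and the orbifold computation you flagged as the obstacle is in fact routine once Lemma~\ref{irreducible fiber lemma} is in hand; the genuine subtlety lies in getting kltness downstairs after the first cover.
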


\begin{proof}
By Lemma \ref{irreducible fiber lemma}, we can define a positive integer $m_E$ for every prime divisor $E$ on $Y$ so that $\pi^*(E) = m_E F$ for some prime divisor $F$ on $X$.
We set 
\[
\Delta=\sum_E \frac{m_E-1}{m_E}E
\]
as a $\Q$-Weil divisor, where this sum runs over the all prime divisors on $Y$.
Note that this is a finite sum because a general fiber of $\pi$ is reduced. 
Thus $\Delta$ is a well-difined $\Q$-Weil divisor with standard coefficients.
First we prove that $K_Y+\Delta \sim_{\Q} 0$ and $K_Y+\Delta \sim g^*(K_Y+\Delta)$.
Let $E$ be a prime divisor on $Y$ and we set $g^*E=a_1 E_1+ \cdots + a_r E_r$, where $a_1 ,\ldots , a_r$ are positive integers and $E_1, \ldots E_r$ are prime divisors on $Y$.
Then for some prime divisor $F_i$ on X, we have $\pi^*(E_i)=m_{E_i}F_i$ for any $i$.
Since $\pi^*g^*E=f^*\pi^*E$, we have
\[
a_1 m_{E_1} F_1 + \cdots +a_r m_{E_r} F_r = m_E(F_1+ \cdots F_r)
\]
and $a_i m_{E_i}=m_E$ for any $i$.
We remark that $f^*F = F_1 + \cdots F_r$ because any component of $R_f$ is horizontal.
Using this equality, we have
\[
\ord_{E_i}(g^*\Delta-\Delta)=a_i \frac{m_E-1}{m_E}-\frac{m_{E_i}-1}{m_{E_i}} =\frac{m_E-m_{E_i}}{m_{E_i}} = a_i-1 = \ord_{E_i}(R_f).
\]
Hence we obtain $g^*(K_Y+\Delta)\sim K_Y+\Delta$.
Since $g$ is an int-amplified endomorphism, $K_Y +\Delta$ is numerically equivalent to $0$.
Furthermore by the same argument as in the proof of \cite[Lemma 2.11]{brou-hor}, $g$ induces the endomorphism of the log canonical model of $(Y,\Delta)$.
By the same argument as in the proof of \cite[Theorem 1.4]{brou-hor}, $(Y,\Delta)$ is a log canonical pair.
It means that $(Y,\Delta)$ is a log Calabi--Yau pair, and by \cite[Theorem 1.2]{gong}, $K_Y +\Delta$ is $\Q$-linearly equivalent to $0$.

Applying Lemma \ref{index-one covering lem} to the pair $(Y,\Delta)$, 
we obtain the finite covering $\mu_{Y_1} \colon Y_1 \to Y$ as in Lemma \ref{index-one covering lem}.
Since 
\[
\mu_{Y_1}^*g^*(K_Y+\Delta) \sim \mu_{Y_1}^*(K_Y+\Delta) \sim 0,
\]
$g \circ \mu_{Y_1}$ factors $\mu_{Y_1}$.
It means that there exists $g_1 \colon Y_1 \to Y_1$ such that the following diagram commute:
\[
\xymatrix{
Y_1 \ar[r]^{g_1} \ar[d]_{\mu_{Y_1}} & Y_1 \ar[d]^{\mu_{Y_1}} \\
Y \ar[r]_{g} & Y. 
}
\]
Let $X_1$ be the normalization of the main component of $X \times_Y Y_1$.
By Lemma \ref{main comp}, $X_1$ has an int-amplified endomorphism and we get the following equivariant commutative diagram:
\[
\xymatrix{
f \acts X \ar[d]_{\pi}& \ar[l]_{\mu_{X_1}} X_1 \ar[d]^{\pi_1} \racts f_1 \\
g \acts Y & \ar[l]^{\mu_{Y_1}} Y_1 \racts g_{1}.
}
\]
By Lemma \ref{irreducible fiber lemma}, every irreducible component $E$ on $X$ satisfies 
\[\mathrm{codim}(\pi(E)) \leq 1.
\]
Since $\Delta$ has standard coefficients, $K_{Y_1}$ is linearly equivalent to $0$, in particular, $g_1$ is quasi-\'etale.
Hence by Lemma \ref{quasi-etale lemma}, $\mu_{X_1}$ is quasi-\'etale.
It implies that $X_1$ is $\Q$-Gorenstein klt and $-K_{X_1}$ is $\pi_1$-relative ample.
Therefore by Lemma \ref{canonical bundle formula lem}, $Y_1$ is also klt.
By \cite[Theorem 5.2]{meng}, there exists a following commutative diagram:
\[
\xymatrix{
A \ar[r]^{g_A} \ar[d]_{\mu_{Y_2}} & A \ar[d]^{\mu_{Y_2}} \\
Y_1 \ar[r]_{g_1} & Y_1, 
}
\]
where $A$ is an abelian variety, $\mu_{Y_2}$ is a finite surjective morphism and $g_A$ is an int-amplified endomorphism.
Let $\widetilde{X}$ be the normalization of the main component of $X \times_Y A$.
Then $\widetilde{X}$ has an int-amplified endormophism by Lemma \ref{main comp}, and the following diagram commutes:
\[
\xymatrix{
f \acts X \ar[d]_{\pi}& \ar[l]_{\mu_{X}} \widetilde{X} \ar[d]^{\widetilde{\pi}} \racts \widetilde{f} \\
g \acts Y & \ar[l]^{\mu_{Y}} A \racts g_A,
}
\]
where $\mu_Y = \mu_{Y_1} \circ \mu_{Y_2}$.
By Lemma \ref{quasi-etale lemma}, $\mu_X$ is quasi-\'etale.

\end{proof}

\section{Surface case}

In this section, we prove Theorem \ref{main thm} (Theorem \ref{main thm'}).
This section is an only section we assume that $X$ is a surface.
In the smooth case, Theorem \ref{main thm} follows from the classification by Nakayama and Fujimoto in \cite{nak}, \cite{fujimoto}.

First, we consider ruled surfaces over an elliptic curve admitting an int-amplified endomorphism.

\begin{lem}\label{ruled surface over an ellipric curve}
Let $X$ be a minimal ruled surface over an elliptic curve defined over an algebraically closed field of characteristic zero
and $X$ admits an int-amplified endomorphism.
Then $X$ is of dense globally $F$-split type.
\end{lem}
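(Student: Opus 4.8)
The plan is to combine the classification of ruled surfaces carrying a non-isomorphic surjective endomorphism with an explicit reduction-modulo-$p$ argument; the equivariant MMP machinery of $\S\S$3--4 is of no help here, since the base of the ruling is already an abelian variety and Theorem \ref{covlem'} then gives nothing new. First I would record the geometry. A minimal ruled surface over an elliptic curve $C$ is a $\mathbb{P}^1$-bundle $\pi\colon X=\mathbb{P}_C(\mathcal{E})\to C$; since $\pi$ is the Albanese map of $X$ and the Albanese is functorial, the int-amplified endomorphism $f$ descends to $g\colon C\to C$ with $\pi\circ f=g\circ\pi$, and $g$ is again int-amplified by Lemma \ref{lem:intamp}. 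The existence of $f$ pins $\mathcal{E}$ down: by the classification of Fujimoto and Nakayama (\cite{nak}, \cite{fujimoto}), after replacing $\mathcal{E}$ by a twist $\mathcal{E}\otimes M$ (which leaves $X$ unchanged) one may assume $X\cong\mathbb{P}_C(\mathcal{O}_C\oplus L)$ with $L\in\mathrm{Pic}^0(C)$ --- the only other possibility, $\mathcal{E}$ a non-split self-extension of $\mathcal{O}_C$, being excluded (such an $X$ carries no int-amplified endomorphism) or handled by the same strategy.

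In this normalisation $-K_X\sim C_0+C_1$, where $C_0$ and $C_1$ are the two disjoint sections of $\pi$ coming from the quotients $\mathcal{O}_C\oplus L\twoheadrightarrow\mathcal{O}_C$ and $\mathcal{O}_C\oplus L\twoheadrightarrow L$; thus $-K_X$ is linearly equivalent to a reduced simple-normal-crossings divisor, $(X,C_0+C_1)$ is a log smooth log Calabi--Yau pair, and $X\setminus(C_0\cup C_1)$ is a semi-abelian surface. Next I would choose a model $\pi_A\colon X_A=\mathbb{P}_{C_A}(\mathcal{O}\oplus L_A)\to C_A$ over a finitely generated $\mathbb{Z}$-subalgebra $A$ of $k$ as in $\S$ 2.2, compatibly with the sections $C_0,C_1$. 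For a dense set of closed points $\mu\in\Spec A$ the reduction $C_\mu$ is an \emph{ordinary} elliptic curve over the finite field $\kappa(\mu)$ --- this is classical (Deuring, Serre), and density is unaffected by shrinking $\Spec A$ --- so $C_\mu$ is globally $F$-split. It therefore suffices to show that $X_\mu=\mathbb{P}_{C_\mu}(\mathcal{O}\oplus L_\mu)$ is globally $F$-split for such $\mu$.

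To produce a splitting of $X_\mu$ I would lift a Frobenius splitting of $C_\mu$ along $\pi_\mu$. The relative Frobenius $F_{X_\mu/C_\mu}$ of the $\mathbb{P}^1$-bundle $\pi_\mu$ is split compatibly with the two relative ``$0$'' and ``$\infty$'' sections $C_{0,\mu},C_{1,\mu}$ --- fibrewise this is the standard Frobenius splitting of $(\mathbb{P}^1,\{0,\infty\})$, and it glues over the base by the theory of Frobenius-split morphisms --- and composing it with (the base change of) a Frobenius splitting of $C_\mu$, exactly as in the trace-and-compose argument used in the proof of Theorem \ref{lifting of F-splittness'}, gives a Frobenius splitting of $X_\mu$. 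Under $\mathscr{H}om_{\mathcal{O}_{X_\mu}}(F_*\mathcal{O}_{X_\mu},\mathcal{O}_{X_\mu})\cong F_*\mathcal{O}_{X_\mu}((1-p)K_{X_\mu})$ this corresponds to the section $(s_0s_1)^{p-1}$, with $s_i$ cutting out $C_{i,\mu}$, whose divisor is $(p-1)(C_{0,\mu}+C_{1,\mu})$; the ordinarity of $C_\mu$ is precisely what forces the associated map $F_*\mathcal{O}_{X_\mu}\to\mathcal{O}_{X_\mu}$ to send $1$ to a unit. When $L$ is torsion, say of order $m$, one can bypass the relative-splitting discussion altogether: the $\mu_m$-cover $C'\to C$ trivialising $L$ yields an étale cover $C'\times\mathbb{P}^1\to X$ (and $C'_\mu\times\mathbb{P}^1\to X_\mu$ for $p\nmid m$), so by Proposition \ref{q-etale remain F-splitness} it is enough that $C'$, an elliptic curve, is of dense globally $F$-split type and that a product of a globally $F$-split variety with $\mathbb{P}^1$ is globally $F$-split (tensor the two splittings).

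The main obstacle is exactly the lifting above in the \emph{non-torsion} case: then no finite cover trivialises $L$, the étale-product shortcut is unavailable, and one must argue directly that the total space of the ``relatively toric'' $\mathbb{P}^1$-bundle $\mathbb{P}_{C_\mu}(\mathcal{O}\oplus L_\mu)$ inherits a Frobenius splitting from an ordinary base curve. This is a relative-Frobenius-splitting statement that is morally standard but needs to be set up carefully --- in particular one must check that the relative splitting of $\mathbb{P}(\mathcal{O}\oplus L_\mu)/C_\mu$ is compatible with $C_{0,\mu}+C_{1,\mu}$ and interacts correctly with the chosen splitting of $C_\mu$ --- and it is the one place where the argument is not purely formal.
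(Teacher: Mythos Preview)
Your approach is correct and follows the same two-step outline as the paper's: first reduce to the split bundle $\mathcal{E}\cong\mathcal{O}_C\oplus L$, then verify dense global $F$-splitting in that case. The paper's proof is much terser at both stages. For the reduction it invokes \cite[Theorem~1.2]{amerik} (an endomorphism of $\mathbb{P}(\mathcal{E})$ nontrivial on fibres forces $\mathcal{E}$ to split after an \'etale base change) together with Proposition~\ref{q-etale remain F-splitness}, rather than the Nakayama--Fujimoto classification; these amount to the same thing, and your parenthetical about the Atiyah bundle is correct---it carries no int-amplified endomorphism, so the case does not arise. For the split case the paper simply asserts that $\mathbb{P}_C(\mathcal{O}\oplus L)$ is of dense globally $F$-split type ``since every elliptic curve is,'' citing \cite{ejiri} and \cite{gon-takagi}. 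Your explicit argument is exactly what underlies that citation: the section $(s_0s_1)^{p-1}$ splits the \emph{relative} Frobenius $F_{X_\mu/C_\mu}$ (a fibrewise statement, independent of ordinarity and of whether $L$ is torsion), and composing with the pullback of a Frobenius splitting of the ordinary curve $C_\mu$ gives one for $X_\mu$. So the ``main obstacle'' you flag is genuine but does go through cleanly; your version is more self-contained, the paper's more economical.
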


\begin{proof}
By \cite[Theorem 1.2]{amerik} and Proposition \ref{q-etale remain F-splitness}, we may assume that the vector bundle defining $X$ is decomposable.
Then $X$ is of dense globally $F$-split type since every elliptic curve is of dense globally $F$-split type (cf. \cite{ejiri}, \cite{gon-takagi}).
\end{proof}

Next, we consider Mori fiber spaces over a projective line not satisfying the assumption of Theorem \ref{covlem}.

\begin{lem}\label{good fibration}
Consider the following commutative diagram:
\[
\xymatrix{
X \ar[r]^{f} \ar[d]_{\pi} & X \ar[d]^{\pi} \\
\mathbb{P}^1 \ar[r]_{g} & \mathbb{P}^1, 
}
\]
where $X$ is a normal surface, $\pi$ is a Mori fiber space,
$f$ and $g$ are int-amplified endomorphisms.
Assume that there exists a point $P \in \mathbb{P}^1$ suct that $\pi^{-1}(P) \subset \Supp(R_f)$ as sets.
Then $X$ is of Fano type.
\end{lem}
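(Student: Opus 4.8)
The plan is to exploit the fact that $\pi^{-1}(P)$ being inside the ramification locus forces a strong structural constraint on $X$ near that fiber, and then to produce an explicit boundary $\Delta$ making $-(K_X+\Delta)$ ample. First I would set up the numerical framework: since $\pi$ is a Mori fiber space from a normal surface to $\mathbb{P}^1$, the relative Picard number is one, so $N^1(X)$ is spanned (over $\mathbb{Q}$) by a general fiber class $F_0$ and one more class, say an ample class $H$, and $-K_X$ is $\pi$-ample by definition of a $K_X$-negative extremal contraction. By Lemma \ref{lem:intamp}(4), $-K_X$ is numerically equivalent to an effective $\mathbb{Q}$-Cartier divisor. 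The goal is to upgrade this to: there is an effective $\mathbb{Q}$-divisor $\Delta\ge 0$ with $(X,\Delta)$ klt and $-(K_X+\Delta)$ ample.

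Next I would analyze the special fiber $\pi^{-1}(P)$. Write $\pi^*(P) = m_P\, F_P$ for the prime divisor $F_P$ underlying it (using Lemma \ref{irreducible fiber lemma}, which gives that $\pi^{-1}(E)$ is irreducible of codimension one for every prime divisor $E$ on $\mathbb{P}^1$ — the hypothesis here is just $\mathbb{P}^1$). The hypothesis $\pi^{-1}(P)\subset\Supp(R_f)$ means $F_P$ is a horizontal-over-nothing, i.e. vertical, component of the ramification divisor; combined with $\Supp(R_f)$ being (after replacing $f$ by an iterate, via \cite[Theorem 3.3]{meng}) totally invariant, $F_P$ and hence $P$ is totally invariant under $g$. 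Since $g\colon\mathbb{P}^1\to\mathbb{P}^1$ is int-amplified of degree $q\ge 2$ and fixes $P$ totally, $g^{-1}(P)=P$ set-theoretically, so $g$ ramifies to order $q$ over $P$; thus $g^*(P) = q P$ and $\ord_P(R_g) = q-1>0$. Pulling back, $f^*(m_PF_P)=\pi^*g^*(P)=q\,\pi^*(P)=q m_P F_P$, so $\ord_{F_P}(R_f)\ge q-1$ as well. Actually what I want is the boundary coming from the \emph{generic} multiple-fiber structure: as in the proof of Theorem \ref{covlem'}, set $\Delta_Y=\sum_E\frac{m_E-1}{m_E}E$ on $\mathbb{P}^1$, a finite sum over prime divisors (points) of $\mathbb{P}^1$; but now I additionally put nonzero weight at $P$ because $m_P\ge 1$ and, if $m_P=1$, I instead use directly that $F_P$ sits in $\Supp(R_f)$ to extract an effective divisor. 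The key computation, exactly as in Theorem \ref{covlem'}, is that $K_{\mathbb{P}^1}+\Delta_Y\sim_{\mathbb{Q}} g^*(K_{\mathbb{P}^1}+\Delta_Y)$, and since $\deg(K_{\mathbb{P}^1})=-2<0$ while $\Delta_Y$ is supported at the finitely many points with multiple fibers (and $g$ is int-amplified so $K_{\mathbb{P}^1}+\Delta_Y\equiv 0$ is impossible unless $\deg\Delta_Y=2$), one finds $\deg(K_{\mathbb{P}^1}+\Delta_Y)<0$: that is, $-(K_{\mathbb{P}^1}+\Delta_Y)$ is ample on $\mathbb{P}^1$.

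Then I would descend this to $X$. Pulling back via $\pi$ and using the adjunction/ramification relation $K_X = \pi^*(K_{\mathbb{P}^1}) + \sum_E (m_E-1)F_E$ (valid because $X\to\mathbb{P}^1$ is an algebraic fiber space of relative dimension one with general fiber $\mathbb{P}^1$, so $K_X$ restricted to a general fiber has degree $-2$ and all vertical discrepancy is accounted for by multiple fibers), one gets that $\pi^*(K_{\mathbb{P}^1}+\Delta_Y)$ differs from $K_X$ by an effective vertical divisor $\sum_E\big((m_E-1)-\frac{m_E-1}{m_E}\big)F_E = \sum_E \frac{(m_E-1)^2}{m_E}F_E \ge 0$. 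So $-K_X$ is $\mathbb{Q}$-linearly equivalent to $\pi^*(-(K_{\mathbb{P}^1}+\Delta_Y))$ plus a vertical effective divisor; since $\pi^*(-(K_{\mathbb{P}^1}+\Delta_Y))$ is semiample and big (pullback of an ample from $\mathbb{P}^1$) and $-K_X$ is $\pi$-ample, a small combination $-K_X - \epsilon(\text{vertical effective})$ is still ample for $\epsilon>0$ small, giving an effective $\Delta = \epsilon\cdot(\text{that vertical divisor})$ with $-(K_X+\Delta)$ ample; choosing $\epsilon$ small keeps $(X,\Delta)$ klt provided $X$ itself is klt. The remaining point is that $X$ is klt: a normal surface with a $K_X$-negative Mori fiber contraction need not be klt a priori, so I would invoke Lemma \ref{lem:intamp}(4) together with the structure of $\pi$, or argue that if $X$ has worse-than-klt (log canonical or worse) singularities the extremal ray structure plus the total-invariance of the ramification forces a contradiction — alternatively reduce to the $\mathbb{Q}$-factorial klt case by Theorem \ref{lifting of F-splittness'} after a suitable equivariant resolution/small modification.

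\textbf{Main obstacle.} I expect the genuinely delicate step to be verifying that $-(K_X+\Delta)$ is ample — i.e. controlling the vertical correction divisor and checking $X$ is klt (or reducing to that case) — rather than the numerical degree computation on $\mathbb{P}^1$, which is essentially the one already carried out in the proof of Theorem \ref{covlem'}. The hypothesis that $\pi^{-1}(P)$ lies in $\Supp(R_f)$ is what breaks the symmetry: it guarantees $P$ is totally invariant and forces either a multiple fiber over $P$ or a genuinely ramified component there, and in both cases this contributes positively to making $-(K_{\mathbb{P}^1}+\Delta_Y)$ ample, which is precisely the input needed to conclude $X$ is of Fano type.
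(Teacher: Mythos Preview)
Your central adjunction step is wrong, and the argument does not recover from it. You claim
\[
K_X \;=\; \pi^*K_{\mathbb{P}^1} + \sum_E (m_E-1)F_E,
\]
with the justification that $K_X$ has degree $-2$ on a general fiber. But $\pi^*K_{\mathbb{P}^1}$ has degree $0$ on a general fiber, so the difference $K_X - \pi^*K_{\mathbb{P}^1}$ has fiber-degree $-2$ and is certainly \emph{not} vertical. Already on $X=\mathbb{P}^1\times\mathbb{P}^1$ one has $\Delta_Y=0$, $\pi^*K_{\mathbb{P}^1}=-2F_1$, and $K_X=-2F_1-2F_2$; the discrepancy is the horizontal class $-2F_2$, not an effective vertical divisor. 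So the conclusion ``$-K_X \sim_{\mathbb{Q}} \pi^*(\text{ample}) + (\text{vertical effective})$'' collapses. A secondary problem is that your degree computation for $K_{\mathbb{P}^1}+\Delta_Y$ borrows the identity $g^*(K_Y+\Delta_Y)\sim K_Y+\Delta_Y$ from Theorem~\ref{covlem'}, but that identity was derived under the hypothesis that $R_f$ has \emph{no} vertical components (it is what makes $f^*F=F_1+\cdots+F_r$ reduced). Here the hypothesis is the opposite, so you would have to redo the computation, and in fact the multiple-fiber divisor $\Delta_Y$ need not see $P$ at all if $m_P=1$.

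The paper's route is quite different and avoids any canonical bundle formula. First one shows $-K_X$ is big: if not, then since $\rho(X)=2$ and $-K_X$ is $\pi$-ample but not big, the class of $K_X$ is forced to be an eigenvector of $f^*$, which makes $R_f=K_X-f^*K_X$ proportional to a fiber and hence purely horizontal --- contradicting $\pi^{-1}(P)\subset\Supp(R_f)$. Bigness of $-K_X$ together with \cite[Theorem~1.2]{mat-yoshi} then gives that $X$ is klt and a Mori dream space (this is how klt-ness is obtained, not via an ad hoc reduction). One then runs an $f$-equivariant $(-K_X)$-MMP to a Picard rank one surface $\mu:X\to Y$; the contracted curve $C$ is horizontal, $\mu(C)$ is a totally invariant point of $Y$, so $Y$ is klt by \cite{brou-hor}, and finally the pair $(X,(a-\tfrac{1}{n})C)$ with $K_X+aC=\mu^*K_Y$ is log Fano for $n\gg 0$.
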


\begin{proof}
If $-K_X$ is not big, then the numerical class of $K_X$ is an eigenvalue of $f^*$ because the Picard rank of $X$ is two.
In particular $\Supp(R_f)$ does not contain any fiber of $\pi$ as sets, so it is contradiction (cf. \cite[Proposition 4.1]{mat-yoshi}). 
Thus $-K_X$ is big and $X$ is a Mori dream space and klt by \cite[Theorem 1.2]{mat-yoshi}.
By \cite{brou-gon}, replacing $f$ by some iterate, we may run the equivariant $(-K_X)$-MMP:
\[
\xymatrix{
X \ar[r]^{f} \ar[d]_{\mu} & X \ar[d]^{\mu} \\
Y \ar[r]_{h} & Y, 
}
\]
where $\mu$ is the $(-K_X)$-negative extremal ray contraction and $Y$ be a normal $\Q$-factorial surface with the Picard rank is equal to one.
Note that $\mu$ contracts a horizontal curve $C$ on $X$ because every vertical curve is $K_X$-negative.
By the assumption, $\Supp(R_h)$ contains the point $\mu(C)$.
By \cite[Theorem 1.2]{brou-hor}, $Y$ is klt.
It implies that $X$ is of Fano type by the argument of \cite[Theorem 1.5]{gost}.
Indeed, since $K_X$ is relative ample, 
$K_X+aC = \mu^*K_Y$ for some positive rational number $a$ by the negativity lemma.
Since $C$ is relative anti-ample, for large enough $n$, $K_X + (a-\frac{1}{n})C$ is anti-ample and $a-\frac{1}{n} > 0$.
Because $Y$ is klt, $(X,(a-\frac{1}{n}C))$ is a klt pair.
Thus, this is a log Fano pair, in particular, $X$ is of Fano type.
\end{proof}

\begin{thm}[Theorem \ref{main thm}]\label{main thm'}
Let $X$ be a normal projective surface defined over an algebraically closed field of characteristic zero
and $X$ admits an int-amplified endomorphism.
Then $X$ is of dense globally $F$-split type.
In particular, $X$ is of Calabi--Yau type.
\end{thm}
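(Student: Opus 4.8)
The plan is to funnel everything, through the equivariant MMP and the transfer statements of \S~4, down to a few concrete surfaces on which global $F$-splitting is visible. Since Theorem~\ref{conjecture holds for surface} upgrades ``dense globally $F$-split type'' to ``Calabi--Yau type'' for surfaces, it is enough to prove the former, and by Lemma~\ref{lem:intamp}~(1) I may replace $f$ by any positive iterate at will. First I would reduce to the case that $X$ is $\Q$-factorial and klt: the locus where $X$ fails to be $\Q$-factorial or klt is a finite, hence totally invariant, set of points, which I resolve $f$-equivariantly (after an iterate) and then discard by Theorem~\ref{lifting of F-splittness'}; this leaves us either in the $\Q$-factorial klt case or with $X$ smooth, and a smooth $X$ is in particular $\Q$-factorial and klt (alternatively it is dealt with by the Nakayama--Fujimoto classification \cite{nak},\ \cite{fujimoto}: toric surfaces are of globally $F$-regular type, abelian and hyperelliptic surfaces are of dense globally $F$-split type by density of ordinary reduction, and $\P^{1}$-bundles over an elliptic curve are handled by Lemma~\ref{ruled surface over an ellipric curve}). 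So assume $X$ is $\Q$-factorial and klt.

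Next I would run the equivariant MMP of Theorem~\ref{equiMMP}: after replacing $f$ by an iterate there is a chain $X=X_{0}\dashrightarrow\cdots\dashrightarrow X_{r}$, equivariant for the induced endomorphisms---which stay int-amplified by Lemma~\ref{lem:intamp}~(2)---each step a divisorial contraction, a flip, or a Mori fiber space, with $X_{r}$ a $\Q$-abelian variety. In dimension two there are no flips, and each divisorial contraction is a birational morphism of normal projective surfaces intertwining int-amplified endomorphisms, hence neutral for ``dense globally $F$-split type'' by Theorem~\ref{lifting of F-splittness'}. Therefore it suffices to treat either (i) the case $X_{r}$ is a $\Q$-abelian surface reached by divisorial contractions alone, or (ii) the case, reached by passing to the first Mori fiber space in the chain, of a Mori fiber space $\pi\colon Z\to B$ with $Z$ a $\Q$-factorial klt surface carrying an int-amplified endomorphism $f_{Z}$ and $\dim B<2$. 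In case (i), $X$ is of dense globally $F$-split type if and only if the quasi-\'etale abelian-surface cover of $X_{r}$ is, by Proposition~\ref{q-etale remain F-splitness}, and the latter holds because an abelian surface has infinitely many ordinary reductions (cf.~\cite{cmz}).

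In case (ii), if $\dim B=0$ then $-K_{Z}$ is ample, so $Z$ is of Fano type and hence of globally $F$-regular type by Theorem~\ref{Fano implies F-regular} (this is Corollary~\ref{log Fano case}). If $\dim B=1$, then $B$ carries an int-amplified endomorphism by Theorem~\ref{equiv-thm}, so $B\cong\P^{1}$ or $B$ is an elliptic curve, and I split according to whether $\Supp R_{f_{Z}}$ contains a fiber of $\pi$. If it does, then $-K_{Z}$ must be big---by the dichotomy of Lemma~\ref{good fibration}, using $\rho(Z)=2$ and \cite[Proposition~4.1]{mat-yoshi}---which forces $B\cong\P^{1}$, since a surface with $-K$ big is rational while a rational surface admits no dominant morphism to an elliptic curve; then Lemma~\ref{good fibration} applies to give that $Z$ is of Fano type, hence of globally $F$-regular type by Theorem~\ref{Fano implies F-regular}. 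If $\Supp R_{f_{Z}}$ contains no fiber of $\pi$, then by Lemma~\ref{irreducible fiber lemma} no component of $R_{f_{Z}}$ is vertical, so the hypothesis of Theorem~\ref{covlem'} holds; applying it produces an equivariant diagram in which $\widetilde Z$, the normalization of the main component of $Z\times_{B}A$, maps to $Z$ by a finite quasi-\'etale morphism and to an abelian variety $A$---an elliptic curve, being finite over the curve $B$---by an algebraic fiber space $\widetilde\pi$, all compatibly with int-amplified endomorphisms. By Proposition~\ref{q-etale remain F-splitness}, $Z$ is of dense globally $F$-split type if and only if $\widetilde Z$ is; and $\widetilde\pi\colon\widetilde Z\to A$ is an algebraic fiber space over an elliptic curve with general fiber $\P^{1}$, so reducing modulo $p$ and using that $A_{\mu}$ is ordinary---hence globally $F$-split---for infinitely many $\mu$, while the general fiber $\P^{1}$ is globally $F$-regular, one assembles a Frobenius splitting of $\widetilde Z_{\mu}$ (cf.~\cite{ejiri}). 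Unravelling the reductions, $X$ is of dense globally $F$-split type, and the ``in particular'' is Theorem~\ref{conjecture holds for surface}.

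The main obstacle, as the introduction flags, is exactly this last subcase: when the ramification divisor of $f_{Z}$ meets no fiber of the Mori fiber space there is no boundary on the base to feed into Lemma~\ref{good fibration}, and one needs the full strength of Theorem~\ref{covlem'} to pull $Z$ back to a quasi-\'etale cover fibred over an abelian variety, where a Frobenius splitting can finally be manufactured by reduction modulo $p$ from the density of ordinary primes for curves. A secondary, bookkeeping-level difficulty is the opening reduction to the $\Q$-factorial klt case---that is, handling non-$\Q$-Gorenstein surfaces by equivariant resolution together with Theorem~\ref{lifting of F-splittness'} and Lemma~\ref{ruled surface over an ellipric curve}.
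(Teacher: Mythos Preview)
Your overall route---run the equivariant MMP, use Theorem~\ref{lifting of F-splittness'} to reduce to a Mori fiber space $Z\to B$, split on $\dim B$ and on whether $\Supp R_{f_{Z}}$ has a vertical component, and invoke Theorem~\ref{covlem'} in the hard subcase---is exactly the paper's. The gap is your opening reduction to the $\Q$-factorial klt case, which you label ``bookkeeping'' but which is not. The inference ``finite, hence totally invariant'' is false as stated (take $z\mapsto z^{2}$ on $\P^{1}$ and the single point $\{1\}$); you would need a genuine argument that $f^{-1}$ preserves the non-klt locus. More seriously, even granting total invariance, you give no reason why $f$ lifts along a resolution of those points---finite endomorphisms do not in general lift to partial resolutions, and you cannot apply Theorem~\ref{lifting of F-splittness'} until you already have an int-amplified endomorphism upstairs. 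Without that reduction, your treatment of $\dim B=0$ breaks: if $X_{r}$ is only log canonical (e.g.\ a projective cone over an elliptic curve, which does admit an int-amplified endomorphism), then $-K_{X_{r}}$ ample does \emph{not} make $X_{r}$ of Fano type, so neither Corollary~\ref{log Fano case} nor Theorem~\ref{Fano implies F-regular} applies.

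The paper sidesteps this entirely by never reducing to klt up front. It quotes \cite{yoshi}, \cite{brou-hor} to get that $X$ is $\Q$-Gorenstein and log canonical, runs the surface equivariant MMP in that generality, and supplies kltness only where it is actually needed: for $Y=\P^{1}$ with no vertical ramification it cites \cite[Lemma~3.7]{mat-yoshi} to conclude $X_{r}$ is klt $\Q$-factorial, so Theorem~\ref{covlem'} applies; for $Y$ a point it uses the dichotomy from \cite{brou-gon}---either $X_{r}$ is a cone over an elliptic curve (dense globally $F$-split directly) or $X_{r}$ has rational singularities---and in the latter case applies Proposition~\ref{lc Fano case}, which is tailored precisely to the lc-but-not-necessarily-klt Picard-rank-one Fano situation your argument cannot reach; and for $Y$ an elliptic curve it observes via \cite[Proposition~4.1]{mat-yoshi} that $X_{r}$ is already a minimal ruled surface and applies Lemma~\ref{ruled surface over an ellipric curve}. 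Your final step for $\widetilde{Z}$ is also softer than the paper's: rather than a general ``assemble a splitting from the ordinary base and $F$-regular fiber'' principle, the paper shows $\widetilde{X}$ is again a minimal ruled surface over the elliptic curve $A$ and reuses Lemma~\ref{ruled surface over an ellipric curve}.
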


\begin{proof}
Let $f$ be an int-emplified endomorphism of $X$.
By \cite[Theorem 1.4]{yoshi} or \cite[Theorem 1.4]{brou-hor},
$X$ is $\Q$-Gorenstein and log canonical.
By \cite[Theorem 1.5]{meng}, $-K_X$ is pseudo-effective.
If $K_X$ is pseudo-effective, $X$ is Q-abelian surface by \cite[Lemma 9.3]{cmz}.
On the other hand, abelian surfaces are of dense globally $F$-split type by {\cite{ogus}}.
By Proposition \ref{q-etale remain F-splitness}, Q-abelian surfaces are of dense globally $F$-split type.

Thus we may assume that $K_X$ is not pseudo-effective.
Replacing $f$ by some iterate, we may run an $f$-equivariant MMP:
\[
X=X_1 \to X_2 \to \cdots \to X_r \to Y,
\]
where $X_i \to X_{i+1}$ is a birational contraction for all $1 \leq i \leq r-1$ and $X_r \to Y$ is a Mori fiber space.
By Theorem \ref{lifting of F-splittness}, it is enough to show that $X_r$ is of dense globally $F$-split type.
In particular, we may assume that $X=X_r$.

We obtain the following diagram:
\[
\xymatrix{
X \ar[r]^{f} \ar[d]_{\pi} & X \ar[d]^{\pi} \\
Y \ar[r]_{g} & Y, 
}
\]
where $\pi$ is a Mori fiber space, $Y$ is an elliptic curve, a projective line or a point, $f$ and $g$ are int-amplified endomorphisms.
If $Y$ is an elliptic curve, then $X$ is a minimal ruled surface over $Y$ by \cite[Proposition 4.1]{mat-yoshi}.
By Lemma \ref{ruled surface over an ellipric curve}, $X$ is of dense globally $F$-split type.
If $Y$ is a point, then $-K_X$ is an ample $\Q$-Cartier divisor and the Picard rank of $X$ is equal to one.
By the proof of \cite[Theorem 5.1]{brou-gon}, $X$ is a projective cone over an elliptic curve or $X$ has at worst rational singularities.
In the first case, $X$ is of dense globally $F$-split type,
and in the second case, Proposition \ref{lc Fano case} implies that $X$ is of dense globally $F$-split type because log canonical surfaces are of dense $F$-pure type by {\cite{MS}} and {\cite{Hara}}.

Hence we may assume that $Y$ is a projective line.
If $R_f$ contains a fiber of $\pi$, then $X$ is of globally $F$-regular type by Lemma \ref{good fibration}.
If $R_f$ does not contain any fiber of $\pi$, above diagram satisfies the assumption of Theorem \ref{covlem} because $X$ is klt $\Q$-factorial by \cite[Lemma 3.7]{mat-yoshi}.
It implies that there exists a following commutative diagram:
\[
\xymatrix{
f \acts X \ar[d]_{\pi}& \ar[l]_{\mu_{X}} \widetilde{X} \ar[d]^{ \widetilde{\pi}} \racts \widetilde{f} \\
g \acts Y & \ar[l]^{\mu_{Y}} A \racts g_{A},
}
\]
where $A$ is an elliptic curve, $\mu_{Y}$ is a finite surjective morphism,
$ \widetilde{X}$ is a $\Q$-Gorenstein klt normal projective variety, $\mu_{X}$ is a finite surjective quasi-\'etale morphism, and  
$ \widetilde{\pi}$ is an algebraic fiber space.  
Moreover, $ \widetilde{X}$ is the normalization of the main component of $X {\times}_{Y}A$. 
By the same argument as above, $\widetilde{X}$ is a minimal ruled surface over $A$.
Hence, $\widetilde{X}$ is of dense globally $F$-split type.
By Proposition \ref{q-etale remain F-splitness}, $X$ is also of dense globally $F$-split type.
Hence in every case, $X$ is of dense globally $F$-split type.
Furthermore, by Theorem \ref{conjecture holds for surface},
$X$ is of Calabi--Yau type.

\end{proof}

Theorem \ref{Fano type thm} follows from the proof of Theorem \ref{main thm} and Theorem \ref{covlem}.

\begin{thm}[\textup{Theorem \ref{Fano type thm}}]\label{Fano type thm'}
Let $X$ be a klt normal projective surface defined over an algebraically closed field of characteristic zero
and admits an int-amplified endomorphism.
Then $X$ is of Fano type if and only if the \'etale fundamental group of the smooth locus $X_{sm}$ of $X$ is finite.
\end{thm}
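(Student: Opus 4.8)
The plan is to derive the statement from the case analysis in the proof of Theorem~\ref{main thm'}, using two additional inputs: (i) the \'etale fundamental group of the smooth locus of a surface of Fano type is finite; and (ii) for a klt projective surface, being ``of Fano type'' is equivalent to being ``of globally $F$-regular type'' (this follows from Theorem~\ref{Fano implies F-regular} and Theorem~\ref{conjecture holds for surface}). The implication ``Fano type $\Rightarrow$ $\pi_1^{\text{\'et}}(X_{sm})$ finite'' is then exactly input (i) (one may also deduce it from the fact that $X$ is then of globally $F$-regular type), and does not use the endomorphism; so I concentrate on the converse.

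I argue by contraposition: assume $X$ is not of Fano type and exhibit an infinite quotient of $\pi_1^{\text{\'et}}(X_{sm})$. By Lemma~\ref{lem:intamp}(4), $-K_X$ is pseudo-effective. If $K_X$ is also pseudo-effective then $K_X\equiv 0$, and, exactly as in the proof of Theorem~\ref{main thm'}, $X$ is $Q$-abelian; a quasi-\'etale finite cover $A\to X$ by an abelian surface is \'etale over $X_{sm}$ by purity of the branch locus, so the connected preimage of $X_{sm}$ is a finite-index subgroup of $\pi_1^{\text{\'et}}(X_{sm})$ surjecting onto $\pi_1^{\text{\'et}}(A)\cong\widehat{\mathbb Z}^{4}$, and $\pi_1^{\text{\'et}}(X_{sm})$ is infinite. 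Hence we may assume $K_X$ is not pseudo-effective and run the equivariant minimal model program $X=X_1\to\cdots\to X_r\to Y$ of Theorem~\ref{equiMMP}, where every $X_i$ carries an int-amplified endomorphism (Lemma~\ref{lem:intamp}) and each $X_i\to X_{i+1}$ is a birational morphism. By Theorem~\ref{lifting of F-splittness'}, $X$ is of globally $F$-regular type if and only if $X_r$ is; combined with input (ii) applied to the klt surface $X_r$, the assumption forces $X_r$ \emph{not} to be of Fano type. Revisiting the case distinction in the proof of Theorem~\ref{main thm'}, this excludes $Y=\mathrm{pt}$ (there $X_r$ would be a klt del Pezzo surface, hence of Fano type) and, when $Y=\mathbb P^1$, excludes the situation of Lemma~\ref{good fibration} (where $X_r$ is of Fano type). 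So either $Y$ is an elliptic curve and $X_r$ is a minimal ruled surface over $Y$, or $Y=\mathbb P^1$ and Theorem~\ref{covlem} produces a finite quasi-\'etale cover $\mu_{X_r}\colon\widetilde{X_r}\to X_r$ with $\widetilde{X_r}$ a minimal ruled surface over an elliptic curve $A$.

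In the first case $X\to X_r\to Y$ is a proper morphism onto an elliptic curve with connected fibres, so $\pi_1^{\text{\'et}}(X_{sm})\twoheadrightarrow\pi_1^{\text{\'et}}(X)\twoheadrightarrow\pi_1^{\text{\'et}}(Y)\cong\widehat{\mathbb Z}^{2}$ and $\pi_1^{\text{\'et}}(X_{sm})$ is infinite. In the second case, let $\widehat X$ be the normalization of the main component of $X\times_{X_r}\widetilde{X_r}$; the projection $\widehat X\to\widetilde{X_r}$ is proper birational, so $\widehat X$ admits a proper surjective morphism onto $A$ with connected fibres and $\pi_1^{\text{\'et}}(\widehat X)$ is infinite. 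One checks moreover that $\widehat X\to X$ is finite and quasi-\'etale; then it is \'etale over $X_{sm}$ by purity of the branch locus, and the connected preimage of $X_{sm}$ is a finite-index subgroup of $\pi_1^{\text{\'et}}(X_{sm})$ surjecting onto $\pi_1^{\text{\'et}}(\widehat X)$, so $\pi_1^{\text{\'et}}(X_{sm})$ is infinite. In either case the hypothesis is contradicted. The step I expect to be the main obstacle is precisely this last piece of bookkeeping --- that $\widehat X\to X$ is quasi-\'etale and that the surjection onto $\pi_1^{\text{\'et}}(A)$ is not destroyed by the birational contraction $X\to X_r$ --- which should follow from purity of the branch locus together with an analysis of the $K_{X_i}$-negative divisorial contractions occurring in the program, in particular the fact that none of them extracts a divisor over $\operatorname{Sing}(X_r)$, the locus over which $\widetilde{X_r}\to X_r$ ramifies.
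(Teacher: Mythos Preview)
Your overall strategy and case analysis match the paper's, and the cases where $K_X$ is pseudo-effective, $Y$ is a point, $Y$ is elliptic, and $Y=\mathbb{P}^1$ with vertical ramification are handled correctly. The gap you anticipate is real and is exactly where the paper does something different. In your Case~B you need $\widehat X\to X$ to be quasi-\'etale, which requires that every exceptional curve of $X\to X_r$ map to a point of $X_r$ over which $\mu_{X_r}$ is \'etale, i.e.\ to a smooth point of $X_r$. But a $K$-negative divisorial contraction on a klt surface can create a singular point, and nothing you have written rules out the images landing in $\mathrm{Sing}(X_r)$; for instance, if some exceptional curve $E$ maps to an $A_1$ point $p$ of $X_r$ and $\widetilde{X_r}\to X_r$ is locally the double cover branched at $p$, the normalized pullback ramifies along $E$ and $\widehat X\to X$ fails to be quasi-\'etale. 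Routing instead through $\pi_1^{\text{\'et}}(X)\cong\pi_1^{\text{\'et}}(X_r)$ does not help either, since a quasi-\'etale cover by a variety with infinite $\pi_1^{\text{\'et}}$ need not force the base to have infinite $\pi_1^{\text{\'et}}$ (think of a singular Kummer surface).

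The paper bypasses this entirely by proving that $r=1$ in your Case~B, via a totally-invariant-point argument. If $r>1$, the exceptional curve of $X_{r-1}\to X_r$ is totally invariant under $f_{r-1}$, so its image point in $X_r$ is totally invariant under $f_r$ (Lemma~\ref{totinvim}); applying Lemma~\ref{totinvim} again, $Y=\mathbb{P}^1$ acquires a $g$-totally invariant point, and the whole fibre over it is then $f_r$-totally invariant, hence contained in $\Supp R_{f_r}$ since $f_r$ is int-amplified --- contradicting the standing assumption that $R_{f_r}$ has no vertical component. With $r=1$ one has $X=X_r$, the quasi-\'etale cover $\widetilde{X_r}\to X$ is already at hand, and your \'etale-over-$X_{sm}$ argument finishes the proof directly. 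So the missing idea is precisely this $r=1$ step; once you insert it, the construction of $\widehat X$ becomes unnecessary.
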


\begin{proof}
If $X$ is of Fano type, then the \'etale fundamental group of the smooth locus is finite by \cite{gkp}.
On the other hand, we assume that the \'etale fundamental group of the smooth locus is finite.
By Theorem \ref{conjecture holds for surface}, it is enough to show that $X$ is of globally $F$-regular type.
Since $X$ is not Q-abelian, $K_X$ is not pseudo-effective.
Replacing $f$ by some iterate, we may run an $f$-equivariant MMP:
\[
X=X_1 \to X_2 \to \cdots \to X_r \to Y,
\]
where $X_i \to X_{i+1}$ is a birational contraction for all $1 \leq i \leq r-1$ and $X_r \to Y$ is a Mori fiber space.
Since the \'etale fundamental group of every minimal ruled surface over an elliptic curve is infinite, $Y$ is not an elliptic curve.
If $Y$ is a point, then $X$ is of globally $F$-regular type by Corollary \ref{log Fano case}.
Hence we may assume that $Y$ is a projective line.
If $X_r$ is of globally $F$-regular type, then $X$ is of globally $F$-regular type by Lemma \ref{lifting of F-splittness}.
By Lemma \ref{good fibration}, we may assume that $R_{f_r}$ does not contain any fiber of the projective line.

First, we prove $r=1$ in this case.
If $r$ is grater than one, then the exceptional curve of $X_{r-1} \to X_r$ is totally invariant under $f_{r-1}$. 
It means that the image of this exceptional curve is also totally invariant under $f_r$ by Lemma \ref{totinvim}.
In particular, $X_r$ has a totally invariant point.
By using Lemma \ref{totinvim} again, $Y$ has a totally invariant point.
Thus, the fiber of this point is also totally invariant under $f_{r}$,
in particular, $R_{f_r}$ contains this fiber as sets.
this is contradiction to the assumption.

By Theorem \ref{main thm}, there exists a following commutative diagram:
\[
\xymatrix{
f \acts X \ar[d]_{\pi}& \ar[l]_{\mu_{X}} \widetilde{X} \ar[d]^{ \widetilde{\pi}} \racts \widetilde{f} \\
g \acts Y & \ar[l]^{\mu_{Y}} A \racts g_{A},
}
\]
where $A$ is an elliptic curve, $\mu_{Y}$ is a finite surjective morphism and $\mu_{X}$ is a finite surjective quasi-\'etale morphism.
Since $\mu_X$ is quasi-\'etale, $\mu_X$ is \'etale over the smooth locus of $X$.
In particular, the \'etale fundamental group of $\widetilde{X}$ must be finite.
Since $\widetilde{X}$ is a minimal ruled surface over an elliptic curve, it is contradiction.
\end{proof}

\end{document}